\definecolor{darkred}{rgb}{1,0,0}
\newtheorem{theorem}{Theorem}[section]
\newtheorem{thm}[theorem]{Theorem}
\newtheorem{lem}[theorem]{Lemma}
\newtheorem{prop}[theorem]{Proposition}
\newtheorem{cor}[theorem]{Corollary}
\newtheorem{conj}[theorem]{Conjecture}
\theoremstyle{remark}
\theoremstyle{definition}
\newtheorem{rmk}[theorem]{Remark}
\newtheorem{example}[theorem]{Example}
\newtheorem{defn}[theorem]{Definition}
\newtheorem{observation}[theorem]{Observation}
\newtheorem*{claim*}{Claim}
\newtheorem*{defn*}{Definition}
\newcommand{\bbc}{\mathbb{C}}
\newcommand{\bbp}{\mathbb{P}}
\newcommand{\bbr}{\mathbb{R}}
\newcommand{\bbz}{\mathbb{Z}}
\newcommand{\mca}{\mathcal{A}}
\newcommand{\mcb}{\mathcal{B}}
\newcommand{\mcf}{\mathcal{F}}
\newcommand{\mci}{\mathcal{I}}
\newcommand{\mcm}{\mathcal{M}}
\newcommand{\mcn}{\mathcal{N}}
\newcommand{\mco}{\mathcal{O}}
\newcommand{\mcr}{\mathcal{R}}
\newcommand{\mcs}{\mathcal{S}}
\newcommand{\mcw}{\mathcal{W}}
\newcommand{\mfi}{\mathfrak{I}}
\newcommand{\tX}{\tilde{X}}
\renewcommand{\aa}{\alpha}
\newcommand{\lam}{\lambda}
\newcommand{\eps}{\epsilon}
\newcommand{\SL}{\textnormal{SL}}
\newcommand{\eval}{\textnormal{eval}}
\newcommand{\Sym}{\textnormal{Sym}}
\newcommand{\sign}{\textnormal{sign}}
\newcommand{\Web}{\textnormal{Web}}
\newcommand{\la}{\langle}
\newcommand{\ra}{\rangle}
\newcommand{\Gr}{{\rm Gr}}
\def\tGr{\widetilde \Gr}
\def\tPi{\widetilde \Pi}
\newcommand{\sgn}{\textnormal{sign}}
\DeclareMathOperator{\Hom}{Hom}
\DeclareMathOperator{\Ind}{Ind}
\DeclareMathOperator{\Imm}{Imm}
\def\wt{{\rm wt}}
\newcommand{\injects}{\hookrightarrow}
\newcommand{\surjects}{\twoheadrightarrow}
\newcommand{\spann}{\textnormal{span}}
\title{From dimers to webs}
\author{Chris Fraser}
\address{Department of Mathematical Sciences, Indiana Unversity--Purdue University Indianapolis, 402 N. Blackford St., Indianapolis, 46202.}
\email{chfraser@iupui.edu}
\author{Thomas Lam}
\address{Department of Mathematics, University of Michigan, 530 Church St., Ann Arbor MI 48109.}
\email{tfylam@umich.edu}
\author{Ian Le}
\address{Perimeter Institute for Theoretical Physics, 31 Caroline Street North, Waterloo, Canada, N2L 2Y5.}
\email{ile@perimeterinstitute.ca}
\keywords{dimer, web, boundary measurement, positroid, Grassmannian}
\numberwithin{equation}{section}
\begin{document}
\begin{abstract}
We formulate a higher-rank version of the \emph{boundary measurement map} for weighted planar bipartite \emph{networks}  in the disk. It sends a network to a linear combination of $\SL_r$-\emph{webs}, and is built upon the $r$-fold dimer model on the network. When $r$ equals~1, our map is a reformulation of Postnikov's boundary measurement used to coordinatize positroid strata.  When $r$ equals 2 or 3, it is a reformulation of the $\SL_2$- and  $\SL_3$-\emph{web immanants} defined by the second author. The basic result is that the higher rank map factors through Postnikov's map. As an application, we deduce generators and relations for the space of $\SL_r$-webs, reproving a result of Cautis-Kamnitzer-Morrison. We establish compatibility between our map and restriction to positroid strata, and thus between webs and total positivity. 
\end{abstract}

\maketitle
\vspace{-.3in}


\section{Introduction}
\label{secn:intro}
The Grassmannian $\Gr(k,n)$ of $k$-planes in $\bbc^n$ is an algebraic variety which is well-loved in combinatorics. This paper links two combinatorial tools -- briefly, \emph{dimer configurations} and \emph{webs} -- which have been used to study $\Gr(k,n)$  and its (homogeneous) coordinate ring. 
Both approaches have a similar flavor -- each involves certain planar diagrams in a disk, and each relies heavily on local diagrammatic moves/relations amongst such diagrams. We show that this resemblance is not coincidental, and that these approaches are dual in a sense that we make precise. Moreover, statements on each side can be translated to the other to give meaningful consequences.

The first approach starts with a choice of \emph{network}~$N$, meaning a planar bipartite graph in the disk whose edges are weighted by nonzero complex numbers. Such a network comes with two parameters: $n$ (the number of boundary vertices) and $k$ (the \emph{excedance}, cf.~\eqref{eq:excedance}). The key operation is Postnikov's \emph{boundary measurement map} 
\begin{equation}\label{eq:introbmmap}
N \mapsto (\Delta_{I}(N))_{I \in \binom{[n]}k} \in \Gr(k,n)
\end{equation}
sending a network~$N$ to its $\binom n k$ \emph{boundary measurements}. Each boundary measurement $\Delta_I(N)$ is a complex number obtained by summing over \emph{dimer configurations of $N$} whose \emph{boundary data} is~$I$. 
The striking feature is that for any network, the $\Delta_I(N)$ satisfy the well-known \emph{Pl\"ucker relations}, so the image of the boundary measurement is a point in the Grassmannian. 

The second approach considers a distinguished class of functions on $\Gr(r,n)$, indexed by planar diagrams known as $\SL_r$-\emph{webs}. Let us consider the space $\mcw(r,n) = \Hom_{\SL_r}((\bbc^r)^{\otimes n}, \bbc)$, i.e. the vector space of $\SL_r$-invariant multilinear function of $n$ vectors. The homogeneous coordinate ring $\bbc[\Gr(r,n)]$ can be viewed as an algebra of $\SL_r$-invariants, and $\mcw(r,n)$ is a certain subspace of $\bbc[\Gr(r,n)]$. We will call elements of $\mcw(r,n)$ \emph{tensor invariants}.

An \emph{$\SL_r$-web diagram} is a planar bipartite graph in the disk, with its edges labeled by positive integers so that the labels around each internal vertex sum to $r$. (See the left hand side of (\ref{eq:SL4WebEvaluationctd}) for an example when $r=4$. Edges with label $1$ are suppressed.) We denote by $\mcf \mcs (r)$ the vector space of formal sums of $\SL_r$-web diagrams (it is related to the \emph{free spider category} \cite{SkewHowe}). Each web diagram determines an element\footnote{in fact, the diagrams defined in this introduction only determine a tensor invariant up to a sign. The sign is determined by a procedure called a \emph{tagging} of the web.} of $\mcw(r,n)$. Intuitively, web diagrams determine tensor invariants as follows: an edge in a web diagram with label $a$ corresponds to a copy of the exterior power $\bigwedge^a(\bbc^r)$, and each interior vertex $v$ corresponds to an $\SL_r$-invariant map between the exterior powers adjacent to~$v$. The web diagram indicates how to compose these building blocks to create more complicated tensor invariants. The map $\mcf \mcs (r) \to \mcw(r,n)$ is surjective; i.e. $\SL_r$-invariants coming from webs span the space of tensor invariants. 

Webs can be used to study more general tensor invariant spaces, and other homogeneous pieces of $\bbc[\Gr(k,n)]$, but in this introduction, we will focus on a particular situation where our results are easiest to state. Thus, we restrict attention to networks $N$ whose number of boundary vertices $n$ is a multiple of its excedance $k$, i.e. $n = kr$. For these~$N$, we construct an $r$-fold boundary measurement map   
\begin{equation}\label{eq:introrbmmap}
N \mapsto \Web_r(N) \in \mcw(r,n),
\end{equation} 
sending a network to a particular formal sum of webs. Its key feature (our Theorem~\ref{thm:welldefined}) is that it factors through the boundary measurement map \eqref{eq:introbmmap}. That is -- if $N$ and $N'$ are two networks with the same boundary measurements (and hence which give the same point in the affine cone over the Grassmannian  $\tGr(k,n)$), then $\Web_r(N)  = \Web_r(N')$ as tensor invariants,  even though these will typically look different as elements of $\mcf \mcs(r)$. When $r$ is $2$ or $3$, this reduces to the construction of Temperley-Lieb immanants and web immanants  via the double-dimer and triple-dimer models studied by the second author \cite{LamDimers}.  We also show (Theorem~\ref{thm:symmetricgroupduality}) that \eqref{eq:introrbmmap} induces a natural isomorphism $\mcw(k,n) \cong \mcw(r,n)^*$. Moreover, both spaces $\mcw(k,n)$ and $\mcw(r,n)$ are irreducible $S_n$-modules, where $S_n$ acts on the tensor invariant spaces by permuting the vectors. As $S_n$-modules, we have that $\mcw(k,n) \cong \mcw(r,n)^* \otimes \epsilon$ where $\epsilon$ is the sign representation. Thus, we get a canonically defined $S_n$-equivariant pairing between $\SL_k$- and $\SL_r$-invariant spaces. For small values of $k$ and $r$, we draw the resulting ``web duality pictures'' in an Appendix. 

We will lay out the contents of this paper while mentioning the applications of our main construction. Section~\ref{secn:networks} reviews the boundary measurement map for networks via dimer configurations \cite{LamDimers}, as well as the local moves on networks which preserve the boundary measurements. We also review in this section how boundary measurements, networks, and local moves, can be used to study the \emph{positroid stratification} of the Grassmannian. Section~\ref{secn:tensors} gives the basics of tensor invariants and web diagrams. In Section~\ref{secn:main} we make our main definition, i.e. the $r$-fold boundary measurement map \eqref{eq:introrbmmap}, and introduce the closely related \emph{immanant map}. Section~\ref{secn:proofofmain} gives a self-contained proof that the $r$-fold boundary measurement factors through Postnikov's boundary measurement map. The key tool is a lemma about sign-coherence for webs that we conjecture is related to total positivity.

Section~\ref{secn:skein} discusses one of our main applications. One of the deepest results on the boundary measurement map is that if networks $N$ and $N'$ have the same boundary measurements, then $N'$ is connected to $N$ by a sequence of local moves \cite{Postnikov}. Likewise, a guiding problem in the history of web combinatorics was to find a complete set of diagrammatic moves describing the kernel of the map $\mcf \mcs (r) \to \mcw(r,n)$. This problem was solved for $r=3$ by Kuperberg \cite{Kuperberg}, studied further  by Kim \cite{KimThesis} and Morrison \cite{MorrisonThesis}, and settled for all $r$ by Cautis, Kamnitzer, and Morrison in \cite{SkewHowe}. We show that the completeness of the relations in \cite{SkewHowe} follows from Theorem~\ref{thm:welldefined} and the connectedness result for networks. We remark that the results in \cite{SkewHowe} hold in greater generality than ours -- those authors work in a quantum deformation of our setting. We are hopeful that an understanding of this will lead to a quantum deformation of the dimer model.


In Section~\ref{secn:positroids}, we make a connection between webs and \emph{positroid varieties}. Here, we extend the results of  Sections~\ref{secn:main} and~\ref{secn:proofofmain} to the context of positroids. The \emph{totally nonnegative Grassmannian} $\Gr(k,n)_{\geq 0}$ is the set of real points in $\Gr(k,n)$ with nonnegative Plucker coordinates \cite{Postnikov}.  The matroid $\mcm$ of a totally nonnegative point $x \in \Gr(k,n)_{\geq 0}$ is known as a \emph{positroid}. The Zariski-closures of the positroid strata are known as \emph{positroid varieties}~\cite{KLS}. The homogeneous coordinate ring $\bbc[\Pi]$  of a positroid variety~$\Pi = \Pi_\mcm$ is a quotient of $\bbc[\Gr(k,n)]$. We show that the homogeneous pieces 
of $\bbc[\Pi]$ are dual to certain naturally defined subspaces of tensor invariants (cf.~\eqref{eq:dualspace}), and suggest a way to compute the dimensions of these pieces using webs. We believe that our duality reflects deep relations between positroids and webs.

We denote by $[n]$ the set of positive integers $\{1,\dots,n\}$, and by $\binom {S} k$ the collection of $k$-element subsets of a set $S$. 

\section*{Acknowledgements}
This project, especially Section~\ref{secn:duality}, grew from a project suggestion made by Thomas Lam and David Speyer for the AMS Snowbird '14 MRC on cluster algebras. Our results from that workshop are in an Appendix, which is jointly written with Darlayne Addabbo, Eric Bucher, Sam Clearman, Laura Escobar, Ningning Ma, Suho Oh, and Hannah Vogel, who were our group members during this workshop. Some of the work took place at the Perimeter Institute. Research at Perimeter Institute is supported by the Government of Canada through Industry Canada and by the Province of Ontario through the Ministry of Research \& Innovation.  T.L. acknowledges support from the Simons Foundation under award number 341949 and from the NSF under agreement No. DMS-1464693.

\section{Networks, boundary measurements, and positroid strata}\label{secn:networks}
\subsection{The dimer model in the disk}
We denote by $\Gr(k,n)$ the Grassmannian of $k$-dimensional subspaces in a fixed $n$-dimensional complex vector space. It has a projective embedding $\Gr(k,n) \injects \bbp^{\binom n k -1}$, and we denote by $\tGr(k,n) \subset \bbc^{\binom n k}$ the affine cone over $\Gr(k,n)$ with respect to this embedding. Points in the affine cone are collections of $\binom n k$ coordinates satisfying the well-known \emph{Pl\"ucker relations}. The homogeneous coordinate ring  $\bbc[\Gr(k,n)]$ (or equivalently, the coordinate ring of the affine cone $\tGr(k,n)$) is generated by \emph{Pl\"ucker coordinates} $(\Delta_I)_{I \in \binom {[n]} k}$. If we represent a point in $x \in \tGr(k,n)$ by a $k \times n$ matrix of maximal rank, then $\Delta_I(x)$ is the $k \times k$ minor of this matrix with columns~$I$.

By a \emph{planar bipartite graph in the disk} we mean a graph $G$ embedded in a closed disk, with its vertices colored in two colors (black and white) such that edges join vertices of opposite color. Furthermore, label the vertices on the boundary of the disk $1,\dots,n$ in counterclockwise order, and require that that $i$th \emph{boundary vertex} is incident to at most one \emph{boundary edge}~$b_i$. 

Furthermore, for the sake of simplicity, throughout this paper we will require that each of the boundary vertices of $G$ is black.


By a \emph{network}~$N$ we will mean a planar bipartite graph in the disk whose edges have been weighted by nonzero complex numbers. In examples, an edge drawn without an edge weight is implicitly assumed to have edge weight  equal to~1. 

A \emph{dimer configuration} on~$N$ (or \emph{almost perfect matching} of $N$), is a collection~$\pi $ of edges of $N$ such that each interior vertex
is used exactly once in $\pi$ (and each boundary vertex is used one or zero times). The \emph{boundary subset} $\partial(\pi) \subset [n]$ is the set of boundary vertices that are used in $\pi$. The cardinality $k = |\partial(\pi)|$ depends only on the underlying bipartite graph $G$, not on the choice of~$\pi$ or the edge weights on $N$. Explicitly: 
\begin{equation}\label{eq:excedance}
k = |\partial(\pi)| = \text{no. of interior white vertices in $G$ minus no. of interior black vertices}.
\end{equation} 
We call the number $k$ in \eqref{eq:excedance} the \emph{excedance} of~$N$.

\begin{rmk}
The requirement that boundary vertices of a planar bipartite graph $G$ are always black simplifies our statements, but can be removed. Any graph $G'$, possibly with white vertices on the boundary, can be turned into a graph $G$ whose boundary vertices are all black, by adding one edge at each white boundary vertex (if $i$ is a white boundary vertex, we drag it into the interior of the disk, and connect it to a newly created black boundary vertex by a newly created edge). 
Dimer configurations $\pi$ on $G$ with boundary $\partial(\pi) = I$ are in bijection with dimer configurations $\pi'$ of $G'$ such that $I$ is the union of the black vertices used in $\pi'$ with the white vertices not used in $\pi'$, cf.~\cite{LamDimers} for further details.  
\end{rmk}

For a network $N$ and a dimer cover $\pi$ on $N$, the \emph{weight} $\wt(\pi)$ is the product of the weights of the edges used in $\pi$. 

The \emph{boundary measurement} $\Delta_I(N)$ is a weight generating
function for dimer configurations with boundary~$I$:
\begin{equation}\label{eq:boundarymeasurmentdefn}
\Delta_I(N) = \sum_{\pi \colon \, \partial(\pi) = I}\text{wt}(\pi).
\end{equation}

\begin{prop}[Kuo \cite{Kuo}, Postnikov-Speyer-Williams \cite{PSW}, Lam \cite{LamNotesII}]\label{prop:localmoves} Let $N$ be a network of excedance $k$, with $n$ boundary vertices, and with at least one almost perfect matching. Then the boundary measurements $(\Delta_I(N))_{I \in \binom {[n]} k} \in \bbc^{\binom n k}$ determine a point $\tX(N)$ in the affine cone $\tGr(k,n)$, and thus a point $X(N) \in \Gr(k,n)$. 
\end{prop}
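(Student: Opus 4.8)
The plan is to verify that the tuple $(\Delta_I(N))_{I \in \binom{[n]}{k}}$ satisfies the three-term Pl\"ucker relations
\begin{equation*}
\Delta_{S\cup ac}(N)\,\Delta_{S\cup bd}(N) \;=\; \Delta_{S\cup ab}(N)\,\Delta_{S\cup cd}(N) \;+\; \Delta_{S\cup ad}(N)\,\Delta_{S\cup bc}(N)
\end{equation*}
for every $S \in \binom{[n]}{k-2}$ and every $a<b<c<d$ in $[n]\setminus S$, and then to invoke the standard fact that the three-term relations (ranging over all such $S$ and $a<b<c<d$) already cut out $\tGr(k,n)$ scheme-theoretically, indeed generate the Pl\"ucker ideal, so that any solution of them lies in $\tGr(k,n)$. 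The point $\tX(N)$ is then well defined, and provided the $\Delta_I(N)$ do not all vanish (which is automatic, e.g., when the edge weights are positive reals) it descends to a point $X(N) \in \Gr(k,n)$.

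To prove the displayed identity I would run a dimer superposition argument in the spirit of Kuo's graphical condensation \cite{Kuo}. Note first that an almost perfect matching $\pi$ of $N$ with $\partial(\pi) = J$ is the same thing as a genuine perfect matching of the graph $N_J$ obtained from $N$ by deleting the boundary vertices not in $J$ together with their boundary edges; hence $\Delta_J(N)$ is the weighted count of perfect matchings of $N_J$, and the six subsets occurring above are obtained from the single graph $H := N_{S \cup \{a,b,c,d\}}$ by deleting two of the four vertices $a,b,c,d$, all of which lie on the outer face of the disk in the cyclic order $a,b,c,d$. Given a pair $(\pi_1,\pi_2)$ of perfect matchings contributing to the left-hand side — so $\pi_1$ omits $\{a,c\}$ and $\pi_2$ omits $\{b,d\}$ — the superposition $\pi_1 \cup \pi_2$ is a disjoint union of doubled edges, cycles, and exactly two simple paths, which join the four degree-one vertices $a,b,c,d$ in pairs. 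A Jordan-curve argument using the cyclic order of $a,b,c,d$ on $\partial H$ forbids the pairing $\{a\!-\!c,\ b\!-\!d\}$ (two such disjoint paths in the disk would have to cross), so the two paths either join $a$ to $b$ and $c$ to $d$, or join $a$ to $d$ and $b$ to $c$. In either case, recoloring (swapping $\pi_1$- and $\pi_2$-edges) along the path emanating from $a$ produces a new pair of perfect matchings that contributes, with the same total weight $\wt(\pi_1)\wt(\pi_2)$, to one of the two terms on the right-hand side; conversely every right-hand-side pair arises this way. One may also simply cite Kuo's condensation theorem \cite{Kuo} after arranging $a,b,c,d$ on a common face as above.

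The main obstacle is precisely this superposition bookkeeping: one must track, for each of the two admissible path-pairings, which right-hand term the recoloring lands in, and check that the resulting assignment is a weight-preserving bijection onto the union of the two right-hand families — this is where planarity enters a second time, to reconcile the $\{a\!-\!b,\,c\!-\!d\}$ and $\{a\!-\!d,\,b\!-\!c\}$ sectors of the three matching families consistently. A cleaner but less self-contained alternative avoids the Pl\"ucker relations altogether: fix a subset $B$ with $\Delta_B(N)\neq 0$ and a matching $\pi_B$ realizing it, use $\pi_B$ to convert dimer configurations of $N$ into flows in a perfectly oriented copy of $N$, and show by a signed Lindstr\"om--Gessel--Viennot computation — tracking planar winding signs, as in Postnikov \cite{Postnikov} and \cite{PSW, LamNotesII} — that the numbers $\Delta_I(N)/\Delta_B(N)$ are exactly the maximal minors of an explicit $k\times n$ matrix with identity columns on $B$; since the maximal minors of a matrix automatically satisfy the Pl\"ucker relations, this reproves the proposition and exhibits the point $X(N)\in\Gr(k,n)$ directly.
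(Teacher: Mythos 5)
The paper offers no proof of this proposition; it is quoted from Kuo, Postnikov--Speyer--Williams and Lam, and the proof in those sources is essentially your closing ``alternative'': fix an almost perfect matching $\pi_B$ with boundary $B$, use the associated perfect orientation to rewrite each $\Delta_I(N)$ as a signed sum over flows, and identify $\Delta_I(N)/\Delta_B(N)$ with the maximal minors of an explicit $k\times n$ matrix, which satisfy the Pl\"ucker relations automatically. That part of your proposal is sound and is the intended argument.

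Your primary route, however, fails at the reduction step. The three-term Pl\"ucker relations do \emph{not} generate the Pl\"ucker ideal, and they do not even cut out $\tGr(k,n)$ set-theoretically. Every monomial $\Delta_I\Delta_J$ appearing in a three-term relation has $|I\cap J|=k-2$; consequently, for $\Gr(3,6)$ the vector with $\Delta_{123}=\Delta_{456}=1$ and all other coordinates zero satisfies every three-term relation vacuously (each monomial in each such relation vanishes), yet it is not in $\tGr(3,6)$: in the genuine Pl\"ucker relation $\Delta_{123}\Delta_{456}-\Delta_{124}\Delta_{356}+\Delta_{125}\Delta_{346}-\Delta_{126}\Delta_{345}=0$ only the first monomial survives, so the relation fails; equivalently, $e_1\wedge e_2\wedge e_3+e_4\wedge e_5\wedge e_6$ is not decomposable. (Note the counterexample has nonvanishing coordinates, so assuming some $\Delta_B\neq 0$ does not rescue the reduction.) Your Kuo-condensation argument does correctly establish the three-term identities themselves, but that is strictly weaker than membership in the affine cone. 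To repair the proof you should either verify the full set of quadratic Pl\"ucker relations by a more elaborate superposition argument, or simply promote the matrix/LGV argument from ``alternative'' to the main proof. A last small caveat: with general nonzero complex weights the $\Delta_I(N)$ can all vanish by cancellation even when an almost perfect matching exists (so $\tX(N)$ is then the cone point and $X(N)$ is undefined); positivity of the weights is not among the hypotheses, so ``automatic'' overstates the case.
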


That is, the boundary measurements satisfy the Pl\"ucker relations. If $N$ has no almost perfect matchings, then all of its Pl\"ucker coordinates are zero, so $X(N)$ is not a well-defined point in $\Gr(k,n)$. We make the standing assumption that all networks $N$ considered in this paper have an almost perfect matching.

\subsection{Local moves}

It is easy to verify that the following local moves can be applied to a network $N$ to yield a new network $N'$ satisfying $X(N') = X(N)$. Thus the Pl\"ucker coordinates for $\tX(N)$ and $\tX(N')$ differ by a common scalar $\aa$ and we write $\tX(N) = \aa \tX(N')$.
\begin{enumerate}
\item [(G)] Gauge equivalence: If edges $e_1,e_2,\ldots,e_d$ are the edges incident to an interior vertex $v$, we can rescale all of their edge weights by the same constant $\aa \in \bbc^*$. The resulting network $N'$ satisfies $\tX(N) = \aa \tX(N')$.
\item[(M1)]
Spider move, square move, or urban renewal: assuming the leaf edges of the spider have been gauge fixed to 1, the transformation is
\begin{equation}\label{eq:spiderparameters}
a'=\frac{a}{ac+bd} \qquad b'=\frac{b}{ac+bd} \qquad c'=\frac{c}{ac+bd} \qquad d'=\frac{d}{ac+bd}
\end{equation}
\begin{center}
\begin{tikzpicture}[scale=0.6]
\draw (-2,0) -- (0,1)--(2,0)--(0,-1)--  (-2,0);
\draw (0,1) -- (0,2);
\draw (0,-1) -- (0,-2);
\node at (-1.6,0.9) {$a$};
\node at (-1.6,-0.9) {$d$};
\node at (1.6,0.9) {$b$};
\node at (1.6,-0.9) {$c$};

\filldraw[black] (0,1) circle (0.1cm);
\filldraw[black] (0,-1) circle (0.1cm);
\filldraw[white] (-2,0) circle (0.1cm);
\draw (-2,0) circle (0.1cm);
\filldraw[white] (0,2) circle (0.1cm);
\draw (0,2) circle (0.1cm);
\filldraw[white] (2,0) circle (0.1cm);
\draw (2,0) circle (0.1cm);
\filldraw[white] (0,-2) circle (0.1cm);
\draw (0,-2) circle (0.1cm);
\end{tikzpicture}
\hspace{30pt}
\begin{tikzpicture}[scale=0.7]
\draw (0,-2) -- (1,0)-- (0,2)-- (-1,0)-- (0,-2);
\draw (1,0) -- (2,0);
\draw (-1,0) -- (-2,0);
\node at (0.9,-1.4) {$a'$};
\node at (-0.9,-1.4) {$b'$};
\node at (0.9,1.4) {$d'$};
\node at (-0.9,1.4) {$c'$};

\filldraw[black] (1,0) circle (0.1cm);
\filldraw[black] (-1,0) circle (0.1cm);
\filldraw[white] (-2,0) circle (0.1cm);
\draw (-2,0) circle (0.1cm);
\filldraw[white] (0,2) circle (0.1cm);
\draw (0,2) circle (0.1cm);
\filldraw[white] (2,0) circle (0.1cm);
\draw (2,0) circle (0.1cm);
\filldraw[white] (0,-2) circle (0.1cm);
\draw (0,-2) circle (0.1cm);
\end{tikzpicture}
\end{center}
and satisfies $\tX(N) = (ac+bd) \tX(N')$.
\item[(M2)]
two-valent vertex removal.  If $v$ has degree two, we can gauge fix both incident edges $(v,u)$ and $(v,u')$ to have weight 1, then contract both edges (that is, we remove both edges, and identify $u$ with $u'$).  Note that if $v$ is a two-valent vertex adjacent to boundary vertex $i$, with edges $(v,i)$ and $(v,u)$, 
then this move can only be applied when $u$ has degree at most two. 

\item[(R1)]
Multiple edges with same endpoints is the same as one edge with sum of weights.
\item[(R2)]
Leaf removal:  Suppose $v$ is a leaf, and $(v,u)$ the unique edge incident to it.  Then we can remove both $v$ and $u$, and all edges incident to $u$. If $(u,v)$ is a boundary edge $b_i$, then the leaf cannot be removed. 
\item[(R3)]
Dipoles (two degree one vertices joined by an edge) can be removed.
\end{enumerate}

On the other hand, the following is one of the deepest results on the combinatorics of networks: 
\begin{theorem}\label{thm:connectedness}[Postnikov \cite{Postnikov}]
If $N$ and $N'$ satisfy $X(N) = X(N')$, then they are connected to each other by a finite sequence of these moves. If furthermore both of these networks have the minimal number of faces in their move-equivalence class, then they are connected by a finite sequence using (M1) and (M2) only. 
\end{theorem}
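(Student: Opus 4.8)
The plan is to follow the standard route for a statement of this kind: attach to every network a combinatorial invariant preserved by all the moves, show that this invariant together with the point $X(N) \in \Gr(k,n)$ pins the network down up to moves, and treat the face-minimal case separately. First I would assign to each network $N$ its \textbf{trip (decorated) permutation} $\pi_N \in S_n$, defined by the ``rules of the road'': start along a boundary edge, turn maximally left at each white interior vertex and maximally right at each black interior vertex, and record the boundary vertex from which the trip exits (fixed points being decorated by a colour). A direct local check shows $\pi_N$ is unchanged by the gauge move (G), the square move (M1), and the bivalent-vertex move (M2); the reduction moves (R1)--(R3) apply only when a trip closes into an interior loop, and deleting such a loop does not affect $\pi_N$. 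Thus $\pi_N$ is an invariant of the move-equivalence class, and one shows in addition that the networks with the minimal number of faces in a class are precisely the \emph{reduced} ones, namely those on which no trip closes up.

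Next I would show that $X(N)$ determines $\pi_N$. For reduced $N$ one checks that $X(N)$ actually attains its positroid, i.e. $\Delta_I(N)\neq 0$ for exactly the sets $I$ permitted by the Grassmann necklace of $\pi_N$ --- this is where the standing assumption that $N$ has an almost perfect matching is used, to preclude accidental vanishing. Since the positroid of a point of $\Gr(k,n)$ is intrinsic and positroids biject with decorated permutations, $\pi_N$ is recovered from $X(N)$. Hence if $X(N)=X(N')$ then reducing each side --- by moves (R1)--(R3) and (M2), none of which increases the face count --- produces reduced networks $N_0,N_0'$ with $\pi_{N_0}=\pi_{N_0'}$.

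The core is then a purely combinatorial assertion: \emph{any two reduced planar bipartite graphs with the same decorated permutation $\pi$ are connected by (M1) and (M2) alone}. I would prove this by producing a canonical reduced graph $G_\pi$ attached to $\pi$ (for instance the graph read off from the Le-diagram of $\pi$) and showing every reduced graph $G$ with trip permutation $\pi$ can be brought to $G_\pi$. This proceeds by induction, say on $n$ or on the number of anti-excedances of $\pi$: using square moves and bivalent contractions one standardizes $G$ near the boundary vertex $n$ --- splitting off a lollipop, or pulling the relevant trip to run along the boundary --- after which $n$ can be removed, leaving a reduced graph for a positroid on $[n-1]$; then apply the inductive hypothesis and rebuild. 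Finally, to return from graphs to weighted networks: given reduced $N,N'$ with $X(N)=X(N')$, transport the edge weights of $N$ along the sequence of graph moves connecting its underlying graph to that of $N'$, obtaining a network $N_1$ with the same graph as $N'$ and the same $X$; by Postnikov's parametrization --- the boundary measurement map is injective on networks with a fixed reduced graph modulo the gauge group --- $N_1$ and $N'$ differ only by gauge moves (G). Composing with the reductions of the first step gives the general statement that all moves suffice when $X(N)=X(N')$.

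The main obstacle is the inductive step of the combinatorial classification: choosing, for an arbitrary reduced graph, the right sequence of square moves and contractions that standardizes it near a boundary vertex while controlling how the trip permutation changes, together with the delicate case analysis near the boundary (whether the boundary edge $b_n$ is present, lollipops, bridges, and so on). The supporting inputs --- that a reduced network realizes its positroid over $\bbc$ under the matching hypothesis, and that the boundary measurement map is injective modulo gauge --- are themselves substantive, though by now standard.
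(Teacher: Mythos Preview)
The paper does not prove this theorem: it is stated as a result of Postnikov, cited without proof, and used as a black box (most notably in Section~\ref{secn:skein}). So there is no ``paper's own proof'' to compare against.

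That said, your outline is a reasonable sketch of the standard argument in Postnikov's setting, but there is a genuine gap in the step where you claim that $X(N)$ determines the decorated permutation $\pi_N$. You write that for reduced $N$, ``$\Delta_I(N)\neq 0$ for exactly the sets $I$ permitted by the Grassmann necklace of $\pi_N$,'' and that the almost-perfect-matching hypothesis ``precludes accidental vanishing.'' Over $\bbr_{>0}$ this is fine (each $\Delta_I(N)$ is a sum of positive terms), but over $\bbc^*$ --- which is the setting of this paper --- it is false: even on a reduced graph representing the top cell, specific complex edge weights can make some $\Delta_I(N)$ vanish, so $X(N)$ lands in a strictly smaller positroid stratum than the one indexed by $\pi_N$. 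The almost-perfect-matching assumption only guarantees that \emph{some} Pl\"ucker coordinate is nonzero, not that the full positroid is attained. Consequently $X(N)=X(N')$ does not directly force $\pi_{N_0}=\pi_{N_0'}$ after reducing, and your induction on the combinatorial side never gets started. Extending Postnikov's argument to $\bbc^*$-weighted networks requires an additional mechanism for handling such degenerations (or a different organization of the proof); this is exactly why the paper invokes the result rather than reproving it.
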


\subsection{Positroids}\label{ssec:positroids}

Considering boundary measurement maps for various bipartite graphs~$G$ leads to a special stratification of the Grassmannian by \emph{positroid varieties}.  The positroid varieties are distinguished in many senses -- they are exactly the varieties that can be obtained by projecting Richardson varieties from the flag variety \cite{KLS}, they are exactly the compatibly split Frobenius subvarieties of $\Gr(k,n)$ with respect to the standard splitting~\cite{KLS}, the (open versions of) positroid varieties are exactly the symplectic leaves with respect to a Poisson structure on $\Gr(k,n)$ \cite{Goya}. We only review what we will need here and refer the reader to \cite{KLS,LamNotesII}.  
 
The \emph{totally nonnegative Grassmannian} $\Gr(k,n)_{\geq 0}$ consists of points in the Grassmannian $\Gr(k,n)$ that can be given by $k \times n$ matrices with real entries, all of whose Pl\"ucker coordinates are nonnegative.  

For any point $x \in \Gr(k,n)$, the \emph{matroid} of $x$ is the realizable matroid $\mcm(x)$ formed by the subsets $I \in \binom {[n]} k$ such that $\Delta_I(x) \neq 0$. The \emph{matroid variety} associated to a matroid $\mcm$ is the closure in $\Gr(k,n)$ of $\{x \in \Gr(k,n) \colon \, \mcm(x) = \mcm\}.$

A matroid is a \emph{positroid} if it is the matroid of a totally nonnegative point $x \in \Gr(k,n)_{\geq 0}$. Its corresponding matroid variety $\Pi = \Pi_\mcm$ is called a \emph{positroid variety}. The \emph{open positroid variety} $\overset{\circ}{\Pi}_\mcm$ is defined to be the subset of $\Pi$ not belonging to a lower-dimensional positroid variety; it is a Zariski open subset of $\Pi$. The intersection $\overset{\circ}{\Pi} \cap \Gr(k,n)_{\geq 0}$ is called a \emph{positroid cell} and is homeomorphic to~$\bbr_{>0}^{\dim \Pi}$.  

Unlike general matroid varieties which can have arbitrarily bad singularities, positroid varieties are normal and Cohen-Macaulay. Likewise, while the problem of indexing matroid varieties (that is, recognizing representable matroids) is essentially hopeless, positroids are indexed by a well-behaved family of combinatorial structures.  Here is one way of indexing positroid strata: 

\begin{theorem}[Postnikov, Knutson-Lam-Speyer \cite{KLS}]\label{thm:postnikov}
Let $G$ be a planar bipartite graph in the disk. 
Then as $N$ varies over all possible edge-weightings of $G$ by nonzero complex numbers, the boundary measurements $X(N)$ sweep out a Zariski dense subset of a single positroid variety $\Pi = \Pi(G)$. Furthermore, every positroid variety $\Pi$ arises in this way from some planar bipartite graph $G$. If we restrict attention to networks $N$ with positive real edge weights, then the boundary measurements sweep out the entire positroid cell $\Gr(k,n)_{\geq 0} \cap \overset{\circ}{\Pi}$. In particular, if a given boundary measurement $\Delta_I$ is not identically zero on $G$, then $\Delta_I(N)$ is positive for each choice of network $N$ with $\bbr_{>0}$ edge weights. 
\end{theorem}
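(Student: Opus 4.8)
The plan is to view the boundary measurement as a morphism of algebraic varieties, take the closure of its image, and identify that closure with a positroid variety; the totally nonnegative statements then come out by tracking the positive locus along the way.

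First I would fix the underlying graph $G$ with edge set $E$ and observe that, by \eqref{eq:boundarymeasurmentdefn}, each $\Delta_I(N)$ is a polynomial in the edge weights (a sum of the monomials $\wt(\pi)$, each with coefficient $1$). Hence $N \mapsto \tX(N)$ is a morphism $\Phi_G \colon (\bbc^*)^{E} \to \tGr(k,n)$, landing in the affine cone by Proposition~\ref{prop:localmoves}, and after projectivizing a morphism $\overline{\Phi}_G$ to $\Gr(k,n)$ on the locus where some $\Delta_I(N) \neq 0$. Since $(\bbc^*)^E$ is irreducible, the Zariski closure $Z(G)$ of the image of $\overline{\Phi}_G$ is an irreducible subvariety of $\Gr(k,n)$; this already shows the boundary measurements sweep out a Zariski-dense subset of a \emph{single} irreducible variety. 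The last assertion of the theorem is immediate from the same formula: if the edge weights lie in $\bbr_{>0}$ then every monomial $\wt(\pi)$ is positive, so $\Delta_I(N) > 0$ whenever some dimer configuration has boundary $I$ — equivalently whenever $\Delta_I \not\equiv 0$ on $G$ — and in particular $X(N) \in \Gr(k,n)_{\geq 0}$.

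Next I would reduce to the case that $G$ is \emph{reduced} (has the fewest faces in its move-equivalence class). The moves (G), (M1), (M2) preserve the set $\{X(N)\}$, hence $Z(G)$, by Proposition~\ref{prop:localmoves}; the remaining reductions (R1)--(R3) and deletion of edges lying in no almost perfect matching alter only the parametrizing torus and an overall scaling, leaving the projective image $Z(G)$ and its closure unchanged. Applying these until the face count is minimal, one may assume $G$ is reduced, in which case Postnikov's structure theory of reduced plabic graphs applies, and two facts are needed. (i) The generic matroid $\mcm(G) = \{\, I \in \binom{[n]}{k} : \Delta_I \not\equiv 0 \text{ on } G \,\}$ — which also equals the positive-real matroid, since a polynomial all of whose coefficients are $1$ never vanishes on $\bbr_{>0}^E$ — is exactly the positroid attached to the decorated permutation (equivalently the Grassmann necklace) of $G$; this is a combinatorial statement about which boundary subsets support a dimer configuration of a reduced graph. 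Therefore $Z(G)$, being irreducible and contained in $\overline{\{x : \mcm(x) = \mcm(G)\}}$, lies inside the positroid variety $\Pi := \Pi_{\mcm(G)}$. (ii) Gauge fixing cuts $(\bbc^*)^E$ down to a torus of dimension equal to the number of faces of $G$ minus one, which equals $\dim \Pi$, and on the positive locus $\bbr_{>0}^E$ (modulo gauge) the boundary measurement map is \emph{injective} (Postnikov). Hence $\dim Z(G) \geq \dim \Pi$, forcing $Z(G) = \Pi$ and the image to be Zariski dense in it. The positive locus then maps into $\Gr(k,n)_{\geq 0} \cap \Pi$, and by the injectivity together with the homeomorphism $\overset{\circ}{\Pi} \cap \Gr(k,n)_{\geq 0} \cong \bbr_{>0}^{\dim \Pi}$ and the matching of dimensions, it covers exactly the positroid cell $\Gr(k,n)_{\geq 0} \cap \overset{\circ}{\Pi}$. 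Finally, that every positroid variety arises as some $\Pi(G)$ follows by exhibiting, for each decorated permutation (equivalently each Le-diagram), a reduced plabic graph realizing it, e.g. via the bridge or Le-diagram construction.

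The soft parts — that $\Phi_G$ is a morphism, that its image is irreducible, and that positive weights give positive measurements — are essentially formal. The hard part is everything about reduced graphs: identifying the generic matroid $\mcm(G)$ with the intended positroid, and proving injectivity of the boundary measurement map on the positive part. Both rest on the delicate combinatorics of reduced plabic graphs — zig-zag strands, the absence of ``bad'' configurations, bridge decompositions — which is precisely the technical core of \cite{Postnikov} and \cite{KLS} that one cannot bypass.
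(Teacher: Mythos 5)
The paper does not prove this theorem at all: it is stated as a citation of Postnikov and Knutson--Lam--Speyer, and the authors use it as a black box. So there is no internal proof to compare against; what you have written is an outline of the literature proof, and as such it is a fair and essentially accurate road map. You correctly isolate the formal parts (polynomiality of the $\Delta_I(N)$, irreducibility of the image of a torus, manifest positivity of a polynomial with all coefficients equal to $1$ on $\bbr_{>0}^E$), you correctly reduce to reduced graphs via the local moves, and you are honest that the identification of the generic matroid $\mcm(G)$ with the positroid of the decorated permutation, and the injectivity of the gauge-fixed boundary measurement on $\bbr_{>0}^{\#\text{faces}-1}$, are the technical core of \cite{Postnikov} and cannot be rederived in a few lines.

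One step of your sketch does not follow as written: the claim that the positive locus covers \emph{exactly} the positroid cell $\Gr(k,n)_{\geq 0}\cap\overset{\circ}{\Pi}$. Injectivity plus equality of dimensions plus the fact that the cell is homeomorphic to $\bbr_{>0}^{\dim\Pi}$ only gives (via invariance of domain) that the image is a nonempty \emph{open} subset of the cell; an open embedding of $\bbr_{>0}^{d}$ into $\bbr_{>0}^{d}$ need not be onto. Surjectivity requires a separate argument — in Postnikov's treatment it comes from an explicit inductive parametrization (the Le-diagram/bridge construction furnishes an inverse to the boundary measurement map on the cell), or alternatively from a properness/closedness argument showing the image is also closed in the cell. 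Since you are already citing the combinatorial machinery of reduced plabic graphs for the other hard steps, this is easily repaired by citing the explicit inverse as well, but as stated the dimension count alone does not close the argument.
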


We denote by $\mcm(G)$ the positroid such that $\Pi_\mcm = \Pi(G)$.  We say that $G$ (or $N$) \emph{represents the top cell} if its positroid $\mcm(G)$ is the uniform matroid, or equivalently, if its positroid variety $\Pi(G)$ is equal to the Grassmannian $\Gr(k,n)$.

\section{Tensor invariants and webs}\label{secn:tensors}
Let $U$ be an $r$-dimensional vector space. We denote by $\bigwedge^a(U)$ the $a$th exterior power of $U$. Throughout this paper we assume we have chosen a basis $E_1\dots,E_r$ for $U$, thus giving an isomorphism $\bigwedge^r(U) \cong \bbc$ under which the volume form $E_1 \wedge \cdots \wedge E_r \mapsto 1$. 

Let $V_1,\dots,V_n$ be a sequence of irreducible representations of $\SL(U)$. A \emph{tensor invariant} is an element of the space
$$\Hom_{\SL(U)}\left(\bigotimes_{i=1}^n V_i,\bbc\right).$$ 
In this paper, we will be interested in tensor invariants of fundamental representations. Let $\lam = (\lam_1,\dots,\lam_n)$ be a sequence of integers satisfying
\begin{equation}\label{eq:rcomposition}
0 \leq \lam_i \leq r \text{ for } i=1, \dots,n, \text{ and } \lam_1 + \lam_2 + \cdots + \lam_n = kr \text{ for some $k$}.  
\end{equation}
We are interested in the space 
\begin{equation}\label{eq:tensorinvariant}
\mcw_\lam(U) = \Hom_{\SL(U)}\left(\bigotimes_{i=1}^n \bigwedge^{\lam_i}(U),\bbc\right).
\end{equation}

The dimension of the space \eqref{eq:tensorinvariant} is the coefficient of the Schur polynomial $s_{((k^r))}(x_1,\dots,x_r)$ in the product of $r$ elementary symmetric polynomials $e_{a_1}(x_1,\dots,x_r) \cdots e_{a_r}(x_1,\dots,x_r)$. By iterating the dual Pieri Rule, this is the number of $r \times k$ tableaux, of content $\lam$, whose entries are strictly increasing along rows and weakly increasing down columns. 

The simplest example, which the reader is encouraged to have in mind throughout, is when $n= kr$ and $\lam = (1,\dots,1)$. In this case we prefer to denote $\mcw_{(1,\dots,1)}(U)$ by $\mcw(r,n)$. The space $\mcw(r,n)$ is the vector space of $\SL_r$-invariant multilinear functions on $n$ vectors.  Its dimension is the number of standard Young tableaux with $r$ rows and $k$ columns. 

\medskip

Webs are a particular way of encoding tensor invariants by using planar diagrams. We will define these tensor invariants in two steps, by introducing \emph{untagged webs} followed by \emph{tagged webs}. 

\begin{defn} An \emph{untagged web} $W$ is a planar graph in the disk, with each directed edge $e = (u,v)$ labeled by a multiplicity $m(e) = m(u,v) \in [r-1]$, so that reversing directions replaces $a$ by $r-a$, that is $m(u,v)+m(v,u) = r$ for all directed edges $(u,v)$. Furthermore, at each interior vertex, there exists a way to  direct the edges so that the sum of the incoming multiplicities equals the sum of the outgoing multiplicities. The degree $\lam(W)$ is the sequence of multiplicities $(m(b_1),\dots,m(b_n))$ obtained by directing all boundary edges away from the boundary. 
\end{defn}

We will see that an untagged web $W$ determines a tensor invariant in $\mcw_\lam(U)$, up to a sign. The sign is determined by choosing a \emph{tagging} of the web, which we now define.

\begin{defn}[Tagged web]\label{df:taggedweb}
A \emph{tagged $\SL_r$-web} $\hat{W}$ 
is a planar graph in the disk subject to the following. First, each edge or half-edge of $\hat{W}$ is directed and labeled by a multiplicity in one of the three following ways
\begin{equation}\label{eq:edgetypes}
\begin{tikzpicture}[scale = 1]
\draw [thick,    decoration={markings,mark=at position 1 with {\arrow[scale=1.7]{>}}},
    postaction={decorate},
    shorten >=0.4pt] (-1,-1)--(-.4,0);
\draw [thick] (-.4,0)--(.2,1);
\node at (.2,0) {$a$};

\begin{scope}[xshift = -5cm]
\node at (8.7,-.5) {$a$};
\node at (9.7,.5) {$r-a$};
\draw [thick, decoration={markings,mark=at position 1 with {\arrow[scale=1.7]{>}}},
    postaction={decorate},
    shorten >=0.4pt] (8,-1)--(8.3,-.5); 
\draw [thick] (8.3,-.5)--(8.9,.5); 
\draw [thick,    decoration={markings,mark=at position 1 with {\arrow[scale=2]{>}}},
    postaction={decorate},
    shorten >=0.4pt] (9.2,1)--(8.9,.5);
\draw [thick] (8.6,0)--(8.4,.17);
\node at (8,-2) {pair tag};
\end{scope}

\begin{scope}[xshift = -2cm]
\node at (8.7,-.5) {$a$};
\node at (9.7,.5) {$r-a$};
\draw [thick]  (8,-1)--(8.3,-.5); 
\draw [thick, decoration={markings,mark=at position 1 with {\arrow[scale=1.7]{>}}},
    postaction={decorate},
    shorten >=0.4pt] (8.6,0)--(8.3,-.5); 
\draw [thick, decoration={markings,mark=at position 1 with {\arrow[scale=1.7]{>}}},
    postaction={decorate},
    shorten >=0.4pt] (8.6,0)--(8.9,.5); 
\draw [thick] (9.2,1)--(8.9,.5);
\draw [thick] (8.6,0)--(8.4,.17);
\node at (8,-2) {source tag};
\end{scope}
\end{tikzpicture}.
\end{equation}

The ``tiny edges'' decorating the second and third edge types are called \emph{tags} and they provide us with a preferred choice of side for the given edge. Second, we require that 
each interior vertex of $\hat{W}$ is modeled on one of the following two pictures:

\begin{equation}
\label{eq:fundamentalmorphisms}
\begin{tikzpicture}[scale = 1]
\draw [thick,decoration={markings,mark=at position 1 with {\arrow[scale=1.7]{>}}},
    postaction={decorate},
    shorten >=0.4pt] (0,0)--(0,.6);
\draw [thick] (0,.6)--(0,1.2);
\draw [thick,decoration={markings,mark=at position 1 with {\arrow[scale=1.7]{>}}},
    postaction={decorate},
    shorten >=0.4pt] (-1.2,-1.2)--(-.6,-.6);
\draw [thick] (-.6,-.6)--(0,0);
\draw [thick,decoration={markings,mark=at position 1 with {\arrow[scale=1.7]{>}}},
    postaction={decorate},
    shorten >=0.4pt] (-.6,-1.2)--(-.3,-.6);
\draw [thick] (-.3,-.6)--(0,0);
\draw [thick,decoration={markings,mark=at position 1 with {\arrow[scale=1.7]{>}}},
    postaction={decorate},
    shorten >=0.4pt] (.6,-1.2)--(.3,-.6);
\draw [thick] (.3,-.6)--(0,0);
\draw [thick,decoration={markings,mark=at position 1 with {\arrow[scale=1.7]{>}}},
    postaction={decorate},
    shorten >=0.4pt] (1.2,-1.2)--(.6,-.6);
\draw [thick] (.6,-.6)--(0,0);

\node at (-1.25,-1.4) {$a_1$};
\node at (-.65,-1.4) {$a_2$};
\node at (0,-1.4) {$\dots$};
\node at (0,-.85) {$...$};
\node at (.65,-1.4) {$a_{s-1}$};
\node at (1.25,-1.4) {$a_s$};
\node at (1.2,.6) {$a_1+\cdots a_s$};
\draw [fill= white] (0,0) circle [radius = .1];
\node at (0,-2) {wedge};

\begin{scope}[xshift = 4.5cm]
\draw [thick,decoration={markings,mark=at position 1 with {\arrow[scale=1.7]{>}}},
    postaction={decorate},
    shorten >=0.4pt] (0,-1.2)--(0,-.6);
\draw [thick] (0,-.6)--(0,0);
\draw [thick] (-1.2,1.2)--(-.6,.6);
\draw [thick,decoration={markings,mark=at position 1 with {\arrow[scale=1.7]{>}}},
    postaction={decorate},
    shorten >=0.4pt] (0,0)--(-.6,.6);
\draw [thick] (-.6,1.2)--(-.3,.6);
\draw [thick,decoration={markings,mark=at position 1 with {\arrow[scale=1.7]{>}}},
    postaction={decorate},
    shorten >=0.4pt] (0,0)--(-.3,.6);
\draw [thick] (.6,1.2)--(.3,.6);
\draw [thick,decoration={markings,mark=at position 1 with {\arrow[scale=1.7]{>}}},
    postaction={decorate},
    shorten >=0.4pt] (0,0)--(.3,.6);
\draw [thick] (1.2,1.2)--(.6,.6);
\draw [thick,decoration={markings,mark=at position 1 with {\arrow[scale=1.7]{>}}},
    postaction={decorate},
    shorten >=0.4pt] (0,0)--(.6,.6);

\node at (-1.25,1.4) {$a_1$};
\node at (-.65,1.4) {$a_2$};
\node at (0,1.4) {$\dots$};
\node at (0,.85) {$...$};
\node at (.65,1.4) {$a_{s-1}$};
\node at (1.25,1.4) {$a_s$};
\node at (1.2,-.5) {$a_1+\cdots a_s$};
\draw [fill= black] (0,0) circle [radius = .1];
\node at (0,-2) {shuffle};
\end{scope}
\end{tikzpicture}.
\end{equation}


Thirdly and finally, we require that each boundary edge in $\hat{W}$ is a source. The \emph{degree} $\lam(\hat{W}) = (\lam_1,\dots,\lam_n)$ of $\hat{W}$ is the sequence of edge multiplicities of the boundary edges. 
\end{defn}

We adopt the convention throughout that $\hat{W}$ stands for a tagged web as in Definition~\ref{df:taggedweb}, and will reserve the symbol $W$ for untagged webs, or for the weblike subgraphs that we discuss later. We will refer to the data of tagging by the letter $\mco$, writing $\hat{W} = \hat{W}(W,\mco)$.

The definition of a tagged web could in principle allow for ``wedge \emph{and} shuffle'' vertices, in which there are multiple edges flowing in both the inward and outward direction in~\eqref{eq:fundamentalmorphisms}. However, these will not come up in practice in our paper, so we do not work with this definition.

\medskip

It will be important for us to note that by tagging sufficiently many edges in $\hat{W}$, one can always ensure that there are no oriented cycles in $\hat{W}$. We will always assume that $\hat{W}$ has been tagged in this way, because this simplifies the association of a tensor invariant to each web. 

\medskip

Denote by $\mcf \mcs_\lam(r)$ the space of finite formal $\bbc$-linear combinations of tagged $\SL_r$-web diagrams of degree $\lam$.  Now we explain how an $\SL_r$-web $\hat{W}$ of degree $\lam$ determines a tensor invariant $\hat{W} \in \mcw_\lam(U)$.  This induces a linear map $\mcf \mcs_\lam(r) \to \mcw_\lam(U)$ that is known to be surjective. 

An edge with multiplicity $m(e)$ in $\hat{W}$ encodes a copy of $\bigwedge^{m(e)}(U)$. Each of the interior vertices \eqref{eq:fundamentalmorphisms} encodes an $\SL(U)$-invariant map between the indicated tensor powers of fundamental representations. The first vertex encodes the exterior product map 
\begin{equation}
\bigwedge^{a_1}(U) \otimes \cdots \otimes  \bigwedge^{a_s}(U) \to \bigwedge^{a_1+\cdots +a_s}(U) \label{eq:wedgeeq}
\end{equation}
given by $x_1 \otimes x_2 \otimes \cdots \otimes x_s \mapsto x_1 \wedge x_2 \wedge \cdots \wedge x_s$.  The second vertex encodes the map 
\begin{equation}
\bigwedge^{a_1+\cdots +a_s}(U) \to \bigwedge^{a_s}(U) \otimes \cdots \otimes  \bigwedge^{a_1}(U) \label{eq:forkeq}
\end{equation}
given by sending the wedge product $x_1 \wedge  \cdots  \wedge x_b \in \bigwedge^{b}(U)$ to the signed sum of shuffles
\begin{equation}\label{eq:shufflesigns}
\sum \pm (x_{i_1} \wedge x_{i_2} \wedge \cdots \wedge x_{i_{a_s}}) \otimes \cdots \otimes  (x_{i_{b-a_1+1}} \wedge x_{i_{b-a_1+2}} \wedge \cdots \wedge x_{i_{b}}) \in \bigwedge^{a_s}(U) \otimes \cdots \otimes  \bigwedge^{a_1}(U)
\end{equation}
where the summation is over permutations $(i_1,i_2,\ldots,i_b)$ of $(1,2,\ldots,b)$ such that indices are increasing in each block: $i_1 < i_2 < \cdots < i_{a_s}$, $\ldots$,  $i_{b-a_1+1}< \cdots < i_b$. 
The sign $\pm$ is the sign of the permutation $(i_1,i_2,\ldots,i_b)$, multiplied by the ``global sign'' of the permutation $(b-a_s+1,\dots,b,\dots,1,\dots,a_1)$ 
Note that in both cases, the cyclic order of the edges in \eqref{eq:fundamentalmorphisms} is crucial for specifying signs. 

The tagged edges in \eqref{eq:edgetypes} should be thought of as degenerate cases of the maps \eqref{eq:wedgeeq} and \eqref{eq:forkeq}, where the tag stands encodes a copy of $\bigwedge^r(U)$, which we canonically identify with $\bbc$ using the volume form. Thus the pair tag $\bigwedge^a \otimes \bigwedge^{r-a} \to \bigwedge^{r} \cong \bbc$ produces a number obtained by pairing the two incoming tensors, and source tag ``creates'' two tensors using the shuffle $\bigwedge^{r} \to \bigwedge^{a} \otimes \bigwedge^{r-a}$. Note that the side of the edge that the tag occurs on matters, because, for example, the maps $\bigwedge^a \otimes \bigwedge^{r-a} \to \bigwedge^{r} \cong \bbc$ and $\bigwedge^{r-a} \otimes \bigwedge^{a} \to \bigwedge^{r} \cong \bbc$ differ by a sign.

To evaluate $\hat{W}$ on a simple tensor $x_1 \otimes \cdots \otimes x_n \in \bigotimes_{i=1}^n \bigwedge^{\lam_i}(U)$, we imagine placing each tensor $x_i$ at boundary vertex~$i$. We repeatedly compose the four basic morphisms -- wedge, shuffle, pair, and source -- as indicated by the arrows in $\hat{W}$ to obtain the value~$\hat{W}(x_1 \otimes \cdots \otimes x_n)$. 

\begin{rmk}[Taggings and perfect orientations]\label{rmk:perfectorientations}
In practice, the following recipe will work to specify a tagging for the webs we consider in this paper. Suppose that we have an untagged web $W$ whose underlying graph $G$ is a bipartite graph.  Orient all the edges in~$W$ from white to black, and further suppose that the weights of the directed edges around each vertex sum to $r$. These conditions characterize the $r$-weblike subgraphs that appear in this paper. In this situation, we can tag the web by choosing a \emph{perfect orientation} of $G$. 

A \emph{perfect orientation} of a planar bipartite graph $G$ in the disk is a choice of direction on each edge of $G$ such that every white vertex has outdegree $1$ and every interior black vertex has indegree $1$. To get a perfect orientation of $G$, just choose an almost perfect matching of $G$: given an almost perfect matching $\pi$, we obtain a perfect orientation by directing each edge $e$ from white to black if $e \in \pi$ (resp. black to white  if $e \notin \pi$).  The $r$-weblike subgraphs defined below always have almost perfect matchings (in fact, they are a union of $r$ almost perfect matchings), so they can be perfectly oriented. Furthermore, if $G$ has excedance $k$, then any perfect orientation for $G$ will have exactly $k$ boundary sinks (and exactly $n-k$ boundary sources). By adding a pair tag at each of the $k$ boundary sinks, we get a tagged web. The only reason we don't use this construction in general is that there is no guarantee that the resulting web does not contain oriented cycles. To obtain such a web, we may need to add more tags along interior edges, introducing sources and pairings.
\end{rmk}

\begin{example}\label{eg:SL4WebEvaluation} Consider the $\SL_4$-web $\hat{W}$ in \eqref{eq:SL4WebEvaluation}. The edge mutiplicities are red, and multiplicities equal to one  are omitted. The underlying bipartite graph for $\hat{W}$ has excedance~$2$.  
In the second figure, we show how to evaluate $\hat{W}$ on the tensor product of basis vectors
$E = E_1 \otimes E_2 \otimes E_3 \otimes E_4 \otimes E_3 \otimes E_2 \otimes E_1 \otimes E_4 \in U^{\otimes 8}$. We abbreviate $E_{123} = E_1 \wedge E_2 \wedge E_3$ etc.
\begin{equation}\label{eq:SL4WebEvaluation}
\begin{tikzpicture}[scale=.85]
\node at (190:2.4cm) {3};
\node at (145:2.4cm) {2};
\node at (100:2.4cm) {1}; 
\node at (50:2.4cm) {8};
\node at (5:2.4cm) {7};
\node at (-40:2.4cm) {6};
\node at (-85:2.4cm) {5};
\node at (-130:2.4cm) {4};

\draw[dashed] (0,0) circle (2cm);
\draw[thick,decoration={markings,mark=at position .6 with {\arrow[scale=1.7]{>}}},
    postaction={decorate},
    shorten >=0.4pt] (0,0)--(15:.9cm);
\node [red] at (55:.65cm) {\tiny $2$};
\draw [thick,decoration={markings,mark=at position .7 with {\arrow[scale=1.7]{>}}},
    postaction={decorate},
    shorten >=0.4pt] (15:1cm)--(40:1.35cm);
\node [red] at (50:1.1cm) {\tiny $3$};
\draw [thick,decoration={markings,mark=at position .6 with {\arrow[scale=1.7]{>}}},
    postaction={decorate},
    shorten >=0.4pt] (50:1.9cm)--(40:1.35cm) ;
\draw [thick] (40:1.35cm)--(50:1.35cm);		
\draw [thick,decoration={markings,mark=at position .6 with {\arrow[scale=1.7]{>}}},
    postaction={decorate},
    shorten >=0.4pt] (5:2cm)--(15:1.15cm);
\draw [thick,decoration={markings,mark=at position .6 with {\arrow[scale=1.7]{>}}},
    postaction={decorate},
    shorten >=0.4pt] (135:1cm)--(135:.15cm);
\node [red] at (165:.7cm) {\tiny $3$};
\draw [thick,decoration={markings,mark=at position .6 with {\arrow[scale=1.7]{>}}},
    postaction={decorate},
    shorten >=0.4pt] (95:2cm)--(130:1.1cm);
\draw [thick,decoration={markings,mark=at position .6 with {\arrow[scale=1.7]{>}}},
    postaction={decorate},
    shorten >=0.4pt] (145:2cm)--(136:1.15cm);
\draw [thick,decoration={markings,mark=at position .6 with {\arrow[scale=1.7]{>}}},
    postaction={decorate},
    shorten >=0.4pt] (190:2cm)--(140:1cm);
\draw [thick,decoration={markings,mark=at position .6 with {\arrow[scale=1.7]{>}}},
    postaction={decorate},
    shorten >=0.4pt](0,0)--(250:.9cm);
\draw [thick,decoration={markings,mark=at position .6 with {\arrow[scale=1.7]{>}}},
    postaction={decorate},
    shorten >=0.4pt] (235:2cm)-- (245:1.13cm);
\draw [thick,decoration={markings,mark=at position .6 with {\arrow[scale=1.7]{>}}},
    postaction={decorate},
    shorten >=0.4pt] (280:2cm)--(255:1.13cm);
\draw [thick,decoration={markings,mark=at position .6 with {\arrow[scale=1.7]{>}}},
    postaction={decorate},
    shorten >=0.4pt] (250:1cm)--(310:1.2cm) node[pos = .5, red, above] {\tiny 3};
\draw [thick,decoration={markings,mark=at position .6 with {\arrow[scale=1.7]{>}}},
    postaction={decorate},
    shorten >=0.4pt] (325:1.9cm)--(310:1.2cm);
\draw [thick] (310:1.2cm)--(323:1cm);

\filldraw[black] (0,0) circle (0.1cm);
\filldraw[white] (15:1cm) circle (.1cm);
\draw (15:1cm) circle (.1cm);
\filldraw[white] (135:1cm) circle (.1cm);
\draw (135:1cm) circle (.1cm);
\filldraw[white] (250:1cm) circle (.1cm);
\draw (250:1cm) circle (.1cm);

\begin{scope}[xshift = 7cm]
\node at (190:2.4cm) {$E_3$};
\node at (145:2.4cm) {$E_2$};
\node at (100:2.4cm) {$E_1$};
\node at (50:2.4cm) {$E_4$};
\node at (5:2.4cm) {$E_1$};
\node at (-40:2.4cm) {$E_2$};
\node at (-85:2.4cm) {$E_3$};
\node at (-130:2.4cm) {$E_4$};
\draw[dashed] (0,0) circle (2cm);

\draw[dashed] (0,0) circle (2cm);
\draw[thick,decoration={markings,mark=at position .6 with {\arrow[scale=1.7]{>}}},
    postaction={decorate},
    shorten >=0.4pt] (0,0)--(15:.9cm);
\node at (55:.65cm) {\tiny $E_{23}$};
\draw [thick,decoration={markings,mark=at position .7 with {\arrow[scale=1.7]{>}}},
    postaction={decorate},
    shorten >=0.4pt] (15:1cm)--(40:1.45cm);
\node at (58:1.2cm) {\tiny $E_{231}$};
\node at (210:.6cm) {\tiny $E_{1}$};
\draw [thick,decoration={markings,mark=at position .6 with {\arrow[scale=1.7]{>}}},
    postaction={decorate},
    shorten >=0.4pt] (50:1.9cm)--(40:1.45cm) ;
\draw [thick] (40:1.45cm)--(50:1.45cm);		
\draw [thick,decoration={markings,mark=at position .6 with {\arrow[scale=1.7]{>}}},
    postaction={decorate},
    shorten >=0.4pt] (5:2cm)--(15:1.15cm);
\draw [thick,decoration={markings,mark=at position .6 with {\arrow[scale=1.7]{>}}},
    postaction={decorate},
    shorten >=0.4pt] (135:1cm)--(135:.15cm);
\node at (165:.7cm) {\tiny $E_{123}$};
\draw [thick,decoration={markings,mark=at position .6 with {\arrow[scale=1.7]{>}}},
    postaction={decorate},
    shorten >=0.4pt] (95:2cm)--(130:1.1cm);
\draw [thick,decoration={markings,mark=at position .6 with {\arrow[scale=1.7]{>}}},
    postaction={decorate},
    shorten >=0.4pt] (145:2cm)--(136:1.15cm);
\draw [thick,decoration={markings,mark=at position .6 with {\arrow[scale=1.7]{>}}},
    postaction={decorate},
    shorten >=0.4pt] (190:2cm)--(140:1cm);
\draw [thick,decoration={markings,mark=at position .6 with {\arrow[scale=1.7]{>}}},
    postaction={decorate},
    shorten >=0.4pt](0,0)--(250:.9cm);
\draw [thick,decoration={markings,mark=at position .6 with {\arrow[scale=1.7]{>}}},
    postaction={decorate},
    shorten >=0.4pt] (235:2cm)-- (245:1.13cm);
\draw [thick,decoration={markings,mark=at position .6 with {\arrow[scale=1.7]{>}}},
    postaction={decorate},
    shorten >=0.4pt] (280:2cm)--(255:1.13cm);
\draw [thick,decoration={markings,mark=at position .6 with {\arrow[scale=1.7]{>}}},
    postaction={decorate},
    shorten >=0.4pt] (250:1cm)--(310:1.3cm) node[pos = .7, above] {\tiny $E_{143}$};
\draw [thick,decoration={markings,mark=at position .6 with {\arrow[scale=1.7]{>}}},
    postaction={decorate},
    shorten >=0.4pt] (325:1.9cm)--(310:1.3cm);
\draw [thick] (310:1.3cm)--(323:1.2cm);

\filldraw[black] (0,0) circle (0.1cm);
\filldraw[white] (15:1cm) circle (.1cm);
\draw (15:1cm) circle (.1cm);
\filldraw[white] (135:1cm) circle (.1cm);
\draw (135:1cm) circle (.1cm);
\filldraw[white] (250:1cm) circle (.1cm);
\draw (250:1cm) circle (.1cm);
\filldraw[black] (0,0) circle (0.1cm);
\filldraw[white] (15:1cm) circle (.1cm);
\draw (15:1cm) circle (.1cm);
\filldraw[white] (135:1cm) circle (.1cm);
\draw (135:1cm) circle (.1cm);
\filldraw[white] (250:1cm) circle (.1cm);
\draw (250:1cm) circle (.1cm);
\end{scope}
\end{tikzpicture}.
\end{equation}
To begin, place the $i$th vector in $E$ at boundary vertex~$i$. Vectors flow along directed arrows until they reach a black vertex, which in this case happens when $E_1 \wedge E_2 \wedge E_3$ arrives at the interior black vertex. The map \eqref{eq:forkeq} splits this tensor up as a signed sum 
\begin{equation}\label{eq:exampleofevaluation}
E_1 \otimes (E_2 \wedge E_3)  - E_2 \otimes (E_1 \wedge E_3)  + E_3 \otimes (E_1 \wedge E_2).
\end{equation} 
The signs come from \eqref{eq:shufflesigns}.
The evaluation is a sum of contributions from three terms, but only the first term pairs nontrivially with the vectors at $6$ and $8$. The pairings for this term are $(E_1 \wedge E_4 \wedge E_3) \wedge E_2 = -1$ and $(E_2 \wedge E_3 \wedge E_1) \wedge E_4 = 1$. The evaluation is $\hat{W} \big|_E = 1 \cdot -1 \cdot 1 = -1$, obtained as the product of the sign at the shuffle, at vertex $6$, and at vertex $8$. 
\end{example}

\begin{rmk}
Definition~\ref{df:taggedweb} is \emph{not} the usual definition of a web \cite{SkewHowe,Kuperberg}. Typically, the definition comes with an added requirement that every interior vertex be \emph{trivalent} (as a degenerate case of this, bivalent interior vertices are also allowed). Requiring trivalence is reasonable, because it can be shown that all possible $\SL(U)$-equivariant maps amongst tensor products of various $\bigwedge^a(V)$ come from compositions of maps involving three tensor factors \cite{MorrisonThesis}. Likewise, using (M2), any network $N$ can be transformed to a network $N'$ with interior vertices of valence at most three, without changing the boundary measurements. However, the trivalent restriction is not necessary for the current work.
\end{rmk}

\begin{example}[Webs in small rank] Let us describe $\SL_r$-webs $\hat{W}$ in the multilinear case $\lam = (1,\dots,1)$, for $r = 1, 2,3$. In each case, we have vectors $v_1,\dots,v_n$ sitting at the boundary vertices $1,\dots,n$. When $r=1$, then $U \cong \bbc$, and $\hat{W}$ is a union of isolated interior edges (these do not affect the tensor invariant and can be removed) and edges based at boundary vertices. Thus $\hat{W}$ encodes a monomial in the vectors $v_1,\dots,v_n$.  When $r =2$, an $\SL_2$-web $\hat{W}$ consists of a disjoint union of a) tagged cycles, and b) directed paths $v_i\to v_j$ between boundary vertices. An oriented cycle contributes a multiplicative factor of $\pm 2$ (depending on the tagging) when $\hat{W}$ is evaluated on $v_1,\dots,v_n$. Changing  the orientation on a path changes the web by a minus sign. Thus, ignoring these signs, and removing all cycles, $\SL_2$-webs are spanned by crossingless matchings on the boundary vertices. In fact, these crossingless matchings are a basis for $\mcw(2,2r)$. 

A result in similar spirit is true when $r=3$. In this case, the sign of a web does not depend on tagging. $\SL_3$-webs are typically drawn as bipartite graphs without directed edges, with the convention that the edge multiplicities are given by $m(b,w)=1$, where $b$ is a black vertex and $w$ is a white vertex. The $\SL_3$ \emph{skein relations} provide diagrammatic rules for expressing any $\SL_3$-web in terms of a basis of \emph{non-elliptic webs} \cite{Kuperberg}, i.e. webs that are without $2$-valent vertices or interior faces bounded by four or fewer sides (cf.~\cite{Tensors} for a summary of these results). 

For $r \geq 4$, the set of $\SL_r$-webs is a distinguished spanning set, but the existence of a \emph{web basis} satisfying enough ``desirable'' properties is unknown.   
\end{example}

Finally, let us recall the relationsip between webs and the Grassmannian. Each Pl\"ucker coordinate $\Delta_I \in \bbc[\Gr(k,n)]$ is an $\SL_k$-invariant function of the column vectors $v_1,\dots,v_n \in \bbc^k$ representing a point in $\Gr(k,n)$. 

In general, there is a $\bbz^n$-grading of $\bbc[\Gr(k,n)]$ given by the degree in each column. For example, $\Delta_{123}\Delta_{256} \in \bbc[\Gr(3,6)]$ has degree $(1,2,1,0,1,1)$. The coordinate ring decomposes as 
\begin{equation}\label{eq:coordringgrading}
\bbc[\Gr(k,n)] = \bigoplus_{r=1}^{\infty} \bigoplus_{\lam_1+\cdots \lam_n = rk} \bbc[\Gr(k,n)]_\lam,
\end{equation}
where the inner direct sum is over $\lam$ as in \eqref{eq:rcomposition}. Note that $\bbc[\Gr(k,n)]_\lam$ is given by the invariant space
$$\Hom_{\SL_k}(\bigotimes_{j=1}^n \Sym^{\lam_j}(\bbc^k),\bbc).$$

Our main construction will give a duality between the graded piece $\bbc[\Gr(k,n)]_{\lam}$ of the homogeneous coordinate ring of the Grassmannian and the $\SL_r$-invariant space $\mcw_\lam(U)$, where $\lam$ satisfies \eqref{eq:rcomposition}. We will discuss in Section~\ref{secn:duality} that these spaces are naturally dual using purely representation-theoretic considerations. However, our work manifests this duality explicitly in terms of webs and dimers.

If $n = rk$, the invariant space $\mcw(k,n)$ sits inside $\bbc[\Gr(k,n)]$ as the multilinear piece $\lam = (1,\dots,1)$. It is spanned by the $r$-fold products of Pl\"ucker coordinates $\Delta_{I_1} \cdots \Delta_{I_r}$ satisfying $I_1 \cup \cdots \cup I_r = [n]$. Thus our results will provide a duality between  $\mcw(k,n)$ and $\mcw(r,n)$. We examine this duality pictorially in the Appendix.

\section{The main construction}\label{secn:main}
In this section we make the connection between dimer configurations and webs. Suppose we are given $r$ dimer configurations $\pi_1,\dots,\pi_r$ on $N$. By superimposing them we naturally obtain what we call an $r$-weblike subgraph:
\begin{defn}
An \emph{$r$-weblike subgraph} $W \subset G$ (alternatively,  $W \subset N$) is a subgraph of $G$, using all the vertices of $G$, with each edge $e$ of $W$ labeled by a multiplicity $m(e) \in [r]$, in such a way that the sum of the multiplicities around each interior vertex is $r$. The \emph{weight} of $W$ is the product $\wt(W) = \prod_e \wt(e)^{m(e)}$ of the edge weights raised to the indicated multiplicity. The \emph{degree} of $W$ is its sequence $\lam(W) = (m(b_1),\dots,m(b_n))$ of boundary multiplicities,  where $b_1,\dots,b_n$ are the boundary edges. 
\end{defn}

Each $r$-weblike subgraph $W$ determines an untagged $\SL_r$-web by giving each black-white edge $(b,w)$ the multiplicity of this edge in $W$. Thus, one can get a tensor invariant $\hat{W}(W,\mco)$ from $W$ by choosing a tagging $\mco$.  For any two such choices of tagging $\mco$ and $\mco'$, the resulting tensor invariants are equal up to a sign: $\hat{W}(W,\mco) = \pm \hat{W}(W,\mco')$.  In Section~\ref{secn:proofofmain}, we define a sign $\sgn(W,\mco) \in \{\pm 1\}$ so that the tensor invariant
\begin{equation}\label{eq:eoraII}
\textbf{W} := \sgn(W,\mco)\hat{W}(W,\mco).
\end{equation}
does not depend on $\mco$.  In other words, there is a ``correct'' choice of sign for the tensor invariant represented by $W$.
With this in mind, we can formulate the main definition of this paper. 
\begin{defn}Let $N$ be a network of excedance $k$ and $\lam$ satisfy \eqref{eq:rcomposition}. We denote by
\begin{equation}\label{eq:Webrdefn}
\Web_r(N;\lam) := \sum_{W \subset G, \;\; \lam(W) = \lam} \wt(W) \, \textbf{W} \in \mcw_\lam(U),  
\end{equation}
the weighted sum over the $r$-weblike subgraphs of $G$ with degree $\lam$. It is a $\bbc$-linear combination of the various boldface $\textbf{W} \in \mcw_\lam(U)$, with the $\wt(W)$'s serving as coefficients. 
\end{defn}

We think of $\Web_r(N;\lam)$ as an \emph{$r$-fold boundary measurement}.  When $r =1$, the choice of $\lam$ is equivalent to the choice of $I \in \binom{[n]}{k}$, and $\Web_1(N;I)$ is a variant of the boundary measurement $\Delta_I(N)$ \eqref{eq:boundarymeasurmentdefn}.  Whereas $\Delta_I(N)$ is a number, our $\Web_1(N;I)$ lies in $\mcw_\lam(U)$, which in this case is isomorphic to $\bbc$.

We denote by $\Web_r(N) := \sum_\lam \Web_r(N;\lam) \in \bigoplus_\lam \mcw_\lam(U)$.

\begin{example}\label{eg:computeweights} Consider the pair of networks $N$ and $N'$ given by
\begin{equation}\label{eq:exampleofnetworks}
\begin{tikzpicture}[scale=.65]
\draw[dashed] (1,1) circle (3cm);
\draw [thick] (0,0) -- (0,2)-- (2,2)-- (2,0)-- (0,0);
\draw [thick] (2.5,-.5) -- (2,0);
\draw [thick] (-.5,2.5) -- (0,2);
\draw [thick] (-.5,2.5)--(-1.6,2.5);
\draw [thick] (-.5,2.5)--(-.8,3.4);
\draw [thick] (2.5,-.5)--(3.6,-.5);
\draw [thick] (2.5,-.5)--(2.8,-1.4);
\draw [thick] (2,2)--(3.1,3.1);
\draw [thick] (-1.1,-1.1)--(0,0);

\node at (-.4,1) {$a$};
\node at (2.4,1) {$c$};
\node at (1,2.4) {$b$};
\node at (1,-0.4) {$d$};
\node at (-1.2,2.2) {$e$};
\node at (3.1,-.1) {$f$};

\filldraw[black] (2,0) circle (0.1cm);
\filldraw[black] (0,2) circle (0.1cm);
\filldraw[white] (0,0) circle (0.1cm);
\draw (0,0) circle (0.1cm);
\filldraw[white] (2,2) circle (0.1cm);
\draw (2,2) circle (0.1cm);
\filldraw[white] (-.5,2.5) circle (0.1cm);
\draw (-.5,2.5) circle (0.1cm);
\filldraw[white] (2.5,-.5) circle (0.1cm);
\draw (2.5,-.5) circle (0.1cm);
\end{tikzpicture}
\hspace{20pt}
\begin{tikzpicture}[scale=0.7]
\draw [thick] (0,0) -- (2,0)--(2,2)--(0,2)--  (0,0);
\draw [thick] (2.5,2.5) -- (2,2);
\draw [thick] (2.5,2.5)--(3.1,3.1);
\draw [thick] (-.5,-.5)--(-1.1,-1.1);
\draw [thick] (0,2)--(-1.6,2.5);
\draw [thick] (0,2)--(-.8,3.4);
\draw [thick] (-.5,-.5)--(-1.1,-1.1);
\draw [thick] (2,0)--(3.6,-.5);
\draw [thick] (2,0)--(2.8,-1.4);
\draw [thick] (-.5,-.5) -- (0,0);
\node at (-.4,1) {$c'$};
\node at (2.4,1) {$a'$};
\node at (1,2.4) {$b'$};
\node at (1,-0.4) {$d'$};
\node at (-1.2,2.1) {$e$};
\node at (3.2,0) {$f$};

\filldraw[black] (0,0) circle (0.1cm);
\filldraw[black] (2,2) circle (0.1cm);
\filldraw[white] (2,0) circle (0.1cm);
\draw (2,0) circle (0.1cm);
\filldraw[white] (0,2) circle (0.1cm);
\draw (0,2) circle (0.1cm);
\filldraw[white] (2.6,2.6) circle (0.1cm);
\draw (2.6,2.6) circle (0.1cm);
\filldraw[white] (-.6,-.6) circle (0.1cm);
\draw (-.6,-.6) circle (0.1cm);
\draw[dashed] (1,1) circle (3cm);
\end{tikzpicture},
\end{equation}
where $a',b',c',d'$ are related to $a,b,c,d$ according to the spider move \eqref{eq:spiderparameters}
\begin{equation}
a'=\frac{a}{ac+bd} \qquad b'=\frac{b}{ac+bd} \qquad c'=\frac{c}{ac+bd} \qquad d'=\frac{d}{ac+bd}.
\end{equation}

Their parameters are $k=2$ and $n=6$, and the letters $a,a',b,b',\dots,e,f \in \bbc^{\times}$ denote edge weights. Let us consider $r =3$ and $\lam = (1,1,1,1,1,1)$. The network $N$ has three $r$-weblike subgraphs with degree $\lam$. The $3$-fold boundary measurement $\Web_3(N;\lam)$ is a linear combination 
\begin{equation}\label{eg:Web3N}
\begin{tikzpicture}[scale=.45]
\draw[dashed] (1,1) circle (3cm);
\draw [thick] (0,0) -- (0,2)-- (2,2)-- (2,0)-- (0,0);
\draw [thick] (2.5,-.5) -- (2,0);
\draw [thick] (-.5,2.5) -- (0,2);
\draw [thick] (-.5,2.5)--(-1.6,2.5);
\draw [thick] (-.5,2.5)--(-.8,3.4);
\draw [thick] (2.5,-.5)--(3.6,-.5);
\draw [thick] (2.5,-.5)--(2.8,-1.4);
\draw [thick] (2,2)--(3.1,3.1);
\draw [thick] (-1.1,-1.1)--(0,0);
\node at (-4,1) {$abcdef$};

\filldraw[black] (2,0) circle (0.1cm);
\filldraw[black] (0,2) circle (0.1cm);
\filldraw[white] (0,0) circle (0.1cm);
\draw (0,0) circle (0.1cm);
\filldraw[white] (2,2) circle (0.1cm);
\draw (2,2) circle (0.1cm);
\filldraw[white] (-.5,2.5) circle (0.1cm);
\draw (-.5,2.5) circle (0.1cm);
\filldraw[white] (2.5,-.5) circle (0.1cm);
\draw (2.5,-.5) circle (0.1cm);
\end{tikzpicture}
\hspace{15pt}
\begin{tikzpicture}[scale=0.45]
\draw[dashed] (1,1) circle (3cm);
\draw [thick] (0,0) -- (0,2);
\draw [thick] (2,0) -- (2,2);
\draw [thick] (2.5,-.5) -- (2,0);
\draw [thick] (-.5,2.5) -- (0,2);
\draw [thick] (-.5,2.5)--(-1.6,2.5);
\draw [thick] (-.5,2.5)--(-.8,3.4);
\draw [thick] (2.5,-.5)--(3.6,-.5);
\draw [thick] (2.5,-.5)--(2.8,-1.4);
\draw [thick] (2,2)--(3.1,3.1);
\draw [thick] (-1.1,-1.1)--(0,0);
\node at (-4.2,1) {$+ \, \, a^2c^2ef$};
\node [red] at (-.5,1) {\tiny 2};
\node [red] at (2.5,1) {\tiny 2};
\filldraw[black] (2,0) circle (0.1cm);
\filldraw[black] (0,2) circle (0.1cm);
\filldraw[white] (0,0) circle (0.1cm);
\draw (0,0) circle (0.1cm);
\filldraw[white] (2,2) circle (0.1cm);
\draw (2,2) circle (0.1cm);
\filldraw[white] (-.5,2.5) circle (0.1cm);
\draw (-.5,2.5) circle (0.1cm);
\filldraw[white] (2.5,-.5) circle (0.1cm);
\draw (2.5,-.5) circle (0.1cm);
\end{tikzpicture}
\hspace{15pt}
\begin{tikzpicture}[scale=0.45]
\draw[dashed] (1,1) circle (3cm);
\draw [thick] (0,0) -- (2,0);
\draw [thick](0,2) -- (2,2);
\draw [thick](2.5,-.5) -- (2,0);
\draw [thick](-.5,2.5) -- (0,2);
\draw [thick](-.5,2.5)--(-1.6,2.5);
\draw [thick](-.5,2.5)--(-.8,3.4);
\draw [thick](2.5,-.5)--(3.6,-.5);
\draw [thick](2.5,-.5)--(2.8,-1.4);
\draw [thick](2,2)--(3.1,3.1);
\draw [thick](-1.1,-1.1)--(0,0);
\node at (-4.2,1) {$+ \, \, b^2d^2ef$};
\node [red] at (1,-.5) {\tiny 2};
\node [red] at (1,2.5) {\tiny 2};
\filldraw[black] (2,0) circle (0.1cm);
\filldraw[black] (0,2) circle (0.1cm);
\filldraw[white] (0,0) circle (0.1cm);
\draw (0,0) circle (0.1cm);
\filldraw[white] (2,2) circle (0.1cm);
\draw (2,2) circle (0.1cm);
\filldraw[white] (-.5,2.5) circle (0.1cm);
\draw (-.5,2.5) circle (0.1cm);
\filldraw[white] (2.5,-.5) circle (0.1cm);
\draw (2.5,-.5) circle (0.1cm);
\end{tikzpicture}
\end{equation} 
with coefficients depending on $a,\dots,f$. On the other hand, $\Web_3(N';\lam)$ is a linear combination of two webs
\begin{equation}\label{eg:Web3N'}
\begin{tikzpicture}[scale=0.45]
\draw [thick](0,0) -- (0,2);
\draw [thick](2,2)--(2,0);
\draw [thick](2.5,2.5) -- (2,2);
\draw [thick](2.5,2.5)--(3.1,3.1);
\draw [thick](-.5,-.5)--(-1.1,-1.1);
\draw [thick](0,2)--(-1.6,2.5);
\draw [thick](0,2)--(-.8,3.4);
\draw [thick](-.5,-.5)--(-1.1,-1.1);
\draw [thick](2,0)--(3.6,-.5);
\draw [thick](2,0)--(2.8,-1.4);
\node at (-4,1) {$a' c'ef$};
\node [red] at (2.05,2.7) {\tiny 2};
\node [red] at (-.7,-.05) {\tiny 2};
\draw (-.5,-.5) -- (0,0);

\filldraw[black] (0,0) circle (0.1cm);
\filldraw[black] (2,2) circle (0.1cm);
\filldraw[white] (2,0) circle (0.1cm);
\draw (2,0) circle (0.1cm);
\filldraw[white] (0,2) circle (0.1cm);
\draw (0,2) circle (0.1cm);
\filldraw[white] (2.6,2.6) circle (0.1cm);
\draw (2.6,2.6) circle (0.1cm);
\filldraw[white] (-.6,-.6) circle (0.1cm);
\draw (-.6,-.6) circle (0.1cm);
\draw[dashed] (1,1) circle (3cm);
\end{tikzpicture}
\hspace{20pt}
\begin{tikzpicture}[scale=0.45]
\draw [thick](0,0) -- (2,0);
\draw [thick](2,2)--(0,2);
\draw [thick](2.5,2.5) -- (2,2);
\draw [thick](2.5,2.5)--(3.1,3.1);
\draw [thick](-.5,-.5)--(-1.1,-1.1);
\draw [thick](0,2)--(-1.6,2.5);
\draw [thick](0,2)--(-.8,3.4);
\draw [thick](-.5,-.5)--(-1.1,-1.1);
\draw [thick](2,0)--(3.6,-.5);
\draw [thick](2,0)--(2.8,-1.4);
\draw [thick](-.5,-.5) -- (0,0);
\node at (-4,1) {$+ \, \, b' d' ef$};
\node [red] at (2.05,2.7) {\tiny 2};
\node [red] at (-.7,-.05) {\tiny 2};
\filldraw[black] (0,0) circle (0.1cm);
\filldraw[black] (2,2) circle (0.1cm);
\filldraw[white] (2,0) circle (0.1cm);
\draw (2,0) circle (0.1cm);
\filldraw[white] (0,2) circle (0.1cm);
\draw (0,2) circle (0.1cm);
\filldraw[white] (2.6,2.6) circle (0.1cm);
\draw (2.6,2.6) circle (0.1cm);
\filldraw[white] (-.6,-.6) circle (0.1cm);
\draw (-.6,-.6) circle (0.1cm);
\draw[dashed] (1,1) circle (3cm);
\end{tikzpicture}.
\end{equation}   
\end{example}

The following theorem says that the $r$-fold boundary measurements factors through Postnikov's boundary measurement map. 

\begin{thm}\label{thm:welldefined}
If $N$ and $N'$ are two networks satisfying $\tX(N) = \tX(N')$, then $\Web_r(N;\lam)  = \Web_r(N';\lam) \in \mcw_\lambda(U)$.   
\end{thm}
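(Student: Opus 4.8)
The plan is to prove something stronger: that $\Web_r(N;\lambda)$ is given by a fixed polynomial in the Plücker coordinates $\Delta_I(N)$, with coefficients in $\mcw_\lambda(U)$ independent of $N$. Since an element of $\mcw_\lambda(U)$ is a multilinear functional, it is determined by its values on the tensors $E_{S_1}\otimes\cdots\otimes E_{S_n}$, where $S_j\subseteq[r]$ with $|S_j| = \lambda_j$ and $E_S := \bigwedge_{s\in S}E_s$. So I would fix such an $S_\bullet$ and aim to show that $\Web_r(N;\lambda)(E_{S_1}\otimes\cdots\otimes E_{S_n})$ equals $\varepsilon(S_\bullet)\,\Delta_{I_1}(N)\cdots\Delta_{I_r}(N)$, where $I_s := \{\,j : s\in S_j\,\}$ and $\varepsilon(S_\bullet)\in\{\pm1\}$ is independent of $N$ (the product being read as $0$ unless every $|I_s| = k$). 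This gives the theorem at once, since $\tX(N) = \tX(N')$ means $\Delta_I(N) = \Delta_I(N')$ for all $I$.

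To compute that value, I would unwind the web-evaluation recipe of Section~\ref{secn:tensors}. For an $r$-weblike subgraph $W$ with $\lambda(W) = \lambda$, evaluating $\mathbf W$ on $E_{S_1}\otimes\cdots\otimes E_{S_n}$ produces a signed sum over \emph{states} of $W$: consistent labelings of the edges of $W$ by subsets $T(e)\subseteq[r]$ with $|T(e)| = m(e)$ and $T(b_j) = S_j$. The key combinatorial observation (as in \cite{LamDimers}) is that a state of $W$ is precisely an ordered $r$-tuple $(\pi_1,\dots,\pi_r)$ of dimer configurations of $N$ with $\bigcup_s\pi_s = W$ and $\partial\pi_s = I_s$, the layer $\pi_s$ being $\{e : s\in T(e)\}$; and every such tuple, over every $W$, is a state of exactly one $W$. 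Since $\wt(W) = \wt(\pi_1)\cdots\wt(\pi_r)$ for any such tuple, summing over all $W$ of degree $\lambda$ and all states collapses the sum in \eqref{eq:Webrdefn} to
\[
\Web_r(N;\lambda)(E_{S_1}\otimes\cdots\otimes E_{S_n}) = \sum_{(\pi_1,\dots,\pi_r)}\wt(\pi_1)\cdots\wt(\pi_r)\,\varepsilon(\pi_1,\dots,\pi_r),
\]
where the sum runs over ordered $r$-tuples of dimer configurations of $N$ with $\partial\pi_s = I_s$, and $\varepsilon(\pi_1,\dots,\pi_r)\in\{\pm1\}$ is the total sign of the associated state, assembled from the signs at wedge and shuffle vertices in \eqref{eq:shufflesigns}, the signs at tags, and the normalizing sign $\sgn(W,\mco)$.

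The crucial input --- and what I expect to be the main obstacle --- is a \emph{sign-coherence lemma}: the sign $\varepsilon(\pi_1,\dots,\pi_r)$ depends only on the boundary data $(I_1,\dots,I_r)$ (equivalently, on $S_\bullet$), and not on the individual dimers, the ambient graph, or the tagging $\mco$. Granting this, write $\varepsilon(S_\bullet)$ for the common value; then the displayed sum factors as $\varepsilon(S_\bullet)\sum_{(\pi_1,\dots,\pi_r)}\wt(\pi_1)\cdots\wt(\pi_r) = \varepsilon(S_\bullet)\,\Delta_{I_1}(N)\cdots\Delta_{I_r}(N)$ by \eqref{eq:boundarymeasurmentdefn}, as desired.

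For the sign-coherence lemma itself: independence of $\mco$ is exactly the property that the normalization $\sgn(W,\mco)$ is built to guarantee (this is set up in Section~\ref{secn:proofofmain}), so one may fix a convenient tagging. Independence of the dimers should follow by connecting any two tuples with the same boundary data through elementary moves --- toggling a single alternating cycle within one layer $\pi_s$, or transferring an alternating cycle between two layers --- and checking that $\varepsilon$ is preserved under each move. This is a local sign calculation: one must verify that the change in the product of vertex signs from \eqref{eq:shufflesigns} along the cycle cancels against the change in $\sgn(W,\mco)$, and here the planarity of the disk together with the precise cyclic-order conventions in \eqref{eq:fundamentalmorphisms} and \eqref{eq:shufflesigns} is what forces the cancellation. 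Tracking these signs around an arbitrary cycle is the technical heart of the argument. Finally, independence of the ambient graph follows because the state/sign data is supported near the boundary, so one may compare any $N$ to a fixed standard network realizing the boundary data $(I_1,\dots,I_r)$.
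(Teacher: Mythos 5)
Your reduction is exactly the paper's: you evaluate $\Web_r(N;\lam)$ on basis tensors $E_\mcs$, identify states of the weblike subgraphs with ordered $r$-tuples of dimer configurations, and collapse the sum to $\sgn(\mcs)\,\Delta_{I_1}(N)\cdots\Delta_{I_r}(N)$ --- this is Proposition~\ref{lem:PluckerstoWebr}, and Theorem~\ref{thm:welldefined} follows immediately from it. You also correctly locate the entire difficulty in the sign-coherence statement, which is the paper's Lemma~\ref{lem:eora}. Up to that point the architecture matches the paper line for line.

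The gap is that the sign-coherence lemma is asserted but not proved, and the sketch you give for it does not close. First, there is a circularity: your sign $\varepsilon(\pi_1,\dots,\pi_r)$ is ``assembled from \dots the normalizing sign $\sgn(W,\mco)$,'' but $\sgn(W,\mco)$ has no independent definition --- its existence (a single sign making \emph{all} consistent labelings of a fixed $(W,\mco)$ contribute coherently, with ratio $\sgn(\mcs)$ as $\mcs$ varies) is precisely the content of the lemma. You cannot verify invariance of a quantity one of whose factors is defined only by the invariance you are trying to prove. The statement must be phrased for a fixed tagged web: every consistent labeling with boundary $\mcs$ contributes the same sign, and that common sign divided by $\sgn(\mcs)$ is independent of $\mcs$. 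Second, your proposed moves are not the right ones for that fixed-$W$ statement: toggling an alternating cycle within one layer changes the weblike subgraph $W$ itself, so it compares contributions to \emph{different} webs, whose normalizations are a priori unrelated; only transfers between layers preserve $W$, and even then the connectivity of consistent labelings under such moves is unaddressed. Third, and most substantively, the ``local sign calculation'' along a cycle is exactly the step the paper cannot do locally: the shuffle signs of \eqref{eq:shufflesigns} and the tag pairings interact with the global planar position of the tags. The paper's proof instead converts a labeling into a flow of $rk$ paths to the tags, compares against a canonical maximally-crossing flow, and shows that the parity of same-color inversions between tags equals a \emph{total nesting number} (intersections of paths with vertical rays above the tags) that is invariant under resolving crossings --- this topological invariant is the missing idea, and without it or a worked-out substitute your argument does not establish the lemma.
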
 

That is, even though the right hand sides of \eqref{eq:Webrdefn} are different elements of~$\mcf \mcs_\lam(r)$, they are equal as tensor invariants in $\mcw_\lambda(U)$. We prove Theorem~\ref{thm:welldefined} in Section~\ref{secn:proofofmain}.

\begin{example}\label{eg:computeweightsii}
Continuing with Example~\ref{eg:computeweights}, recall that $a',b',c',d'$ are related to $a,b,c,d$ by \eqref{eq:spiderparameters}. The networks $N$ and $N'$ are related by (M1), and $\tX(N) = (ac+bd)\tX(N')$.  It follows that $\Web_3(N;\lam) = (ac+bd)^3 \, \Web_3(N';\lam)$. By equating the coefficient of $abcdef$ in \eqref{eg:Web3N} with the coefficient of $\frac{abcdef}{(ac+bd)^3}$ in \eqref{eg:Web3N'}, we deduce the \emph{square move} for $\SL_3$-webs: 
\begin{equation}\label{eg:Web3SquareMove}
\begin{tikzpicture}[scale=.45]
\draw[dashed] (1,1) circle (3cm);
\draw [thick](0,0) -- (0,2)-- (2,2)-- (2,0)-- (0,0);
\draw [thick](2.5,-.5) -- (2,0);
\draw [thick](-.5,2.5) -- (0,2);
\draw [thick](-.5,2.5)--(-1.6,2.5);
\draw [thick](-.5,2.5)--(-.8,3.4);
\draw [thick](2.5,-.5)--(3.6,-.5);
\draw [thick](2.5,-.5)--(2.8,-1.4);
\draw [thick](2,2)--(3.1,3.1);
\draw [thick](-1.1,-1.1)--(0,0);
\node at (6,1) {\large $=$};
\filldraw[black] (2,0) circle (0.1cm);
\filldraw[black] (0,2) circle (0.1cm);
\filldraw[white] (0,0) circle (0.1cm);
\draw (0,0) circle (0.1cm);
\filldraw[white] (2,2) circle (0.1cm);
\draw (2,2) circle (0.1cm);
\filldraw[white] (-.5,2.5) circle (0.1cm);
\draw (-.5,2.5) circle (0.1cm);
\filldraw[white] (2.5,-.5) circle (0.1cm);
\draw (2.5,-.5) circle (0.1cm);
\end{tikzpicture}
\hspace{20pt}
\begin{tikzpicture}[scale=0.45]
\draw [thick](0,0) -- (0,2);
\draw [thick] (2,2)--(2,0);
\draw [thick](2.5,2.5) -- (2,2);
\draw [thick](2.5,2.5)--(3.1,3.1);
\draw [thick](-.5,-.5)--(-1.1,-1.1);
\draw [thick](0,2)--(-1.6,2.5);
\draw [thick](0,2)--(-.8,3.4);
\draw [thick](-.5,-.5)--(-1.1,-1.1);
\draw [thick](2,0)--(3.6,-.5);
\draw [thick](2,0)--(2.8,-1.4);
\node [red] at (2.05,2.7) {\tiny 2};
\node [red] at (-.7,-.05) {\tiny 2};
\draw [thick](-.5,-.5) -- (0,0);

\filldraw[black] (0,0) circle (0.1cm);
\filldraw[black] (2,2) circle (0.1cm);
\filldraw[white] (2,0) circle (0.1cm);
\draw (2,0) circle (0.1cm);
\filldraw[white] (0,2) circle (0.1cm);
\draw (0,2) circle (0.1cm);
\filldraw[white] (2.6,2.6) circle (0.1cm);
\draw (2.6,2.6) circle (0.1cm);
\filldraw[white] (-.6,-.6) circle (0.1cm);
\draw (-.6,-.6) circle (0.1cm);
\draw[dashed] (1,1) circle (3cm);
\end{tikzpicture}
\hspace{20pt}
\begin{tikzpicture}[scale=0.45]
\draw [thick](0,0) -- (2,0);
\draw [thick](2,2)--(0,2);
\draw [thick](2.5,2.5) -- (2,2);
\draw [thick](2.5,2.5)--(3.1,3.1);
\draw [thick](-.5,-.5)--(-1.1,-1.1);
\draw [thick](0,2)--(-1.6,2.5);
\draw [thick](0,2)--(-.8,3.4);
\draw [thick](-.5,-.5)--(-1.1,-1.1);
\draw [thick](2,0)--(3.6,-.5);
\draw [thick](2,0)--(2.8,-1.4);
\draw [thick](-.5,-.5) -- (0,0);
\node at (-3,1) {\large $+ $};
\node [red] at (2.05,2.7) {\tiny 2};
\node [red] at (-.7,-.05) {\tiny 2};
\filldraw[black] (0,0) circle (0.1cm);
\filldraw[black] (2,2) circle (0.1cm);
\filldraw[white] (2,0) circle (0.1cm);
\draw (2,0) circle (0.1cm);
\filldraw[white] (0,2) circle (0.1cm);
\draw (0,2) circle (0.1cm);
\filldraw[white] (2.6,2.6) circle (0.1cm);
\draw (2.6,2.6) circle (0.1cm);
\filldraw[white] (-.6,-.6) circle (0.1cm);
\draw (-.6,-.6) circle (0.1cm);
\draw[dashed] (1,1) circle (3cm);
\end{tikzpicture}.
\end{equation}   
\end{example}

\medskip

\subsection{Immanants}
Now consider any functional $\varphi \in \mcw_\lam(U)^*$ on the $\SL_r$ tensor invariant space, where $\lam$ satisfies \eqref{eq:rcomposition}. From Theorem~\ref{thm:welldefined}, the number 
\begin{equation}\label{eq:pairing}
\varphi(\Web_r(N;\lam))
\end{equation}
is independent of the choice of network $N$ representing the point $\tX(N) \in \tGr(k,n)$. 

Now let $G$ be a bipartite graph that represents the top cell.  As we vary over networks $N$ with underlying graph $G$, the map $\tX(N) \mapsto \varphi(N)$ defines a function on the subset of $\tGr(k,n)$ swept out by $\tX(N)$'s. 

\begin{prop}\label{prop:annoying} The function $\tX(N) \mapsto \varphi(\Web_r(N;\lam))$ extends to a (uniquely defined) element of $\bbc[\Gr(k,n)]_\lam$; this function does not depend on the choice of $G$.
\end{prop}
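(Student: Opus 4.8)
The plan is to realise the assignment $\tX(N) \mapsto \varphi(\Web_r(N;\lam))$ as the restriction of a single rational function on $\tGr(k,n)$, and then to prove that this rational function is regular of multidegree $\lam$.

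First I would fix a reduced planar bipartite graph $G$ representing the top cell, with edge set $E$, and consider the polynomial map $\mu_G\colon \bbc^E \to \tGr(k,n)$ sending $N$ to $\tX(N)$; by Theorem~\ref{thm:postnikov} its image $Y_G$ is Zariski dense (it is stable under scaling, since every almost perfect matching of a fixed bipartite graph uses the same number of edges — the number of interior white vertices, cf.\ \eqref{eq:excedance}). Expanding the definition \eqref{eq:Webrdefn}, $\varphi(\Web_r(N;\lam)) = \sum_{W\subset G,\ \lam(W)=\lam}\varphi(\mathbf{W})\,\wt(W)$ is a polynomial $P_G$ in the edge weights of $G$, because each $\varphi(\mathbf{W})\in\bbc$ depends only on the web $W$ while $\wt(W)=\prod_e\wt(e)^{m(e)}$ is a monomial. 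Two homogeneity features are visible at once: $P_G$ has degree $\lam_i$ in the weight of the boundary edge $b_i$ (every $W$ in the sum has degree $\lam$), and $P_G$ is a semi-invariant of weight $r$ for the gauge action at each interior vertex (the multiplicities there sum to $r$); the boundary measurements $\Delta_I(N)=\mu_G^*(\Delta_I)$ satisfy the same statements with $r$ replaced by $1$. By Theorem~\ref{thm:welldefined}, $P_G$ is constant on the fibres of $\mu_G$, and since the general fibre of $\mu_G$ is irreducible (it is a single gauge orbit for reduced $G$), $P_G$ descends to a rational function $g\in\bbc(\tGr(k,n))=\mathrm{Frac}\,\bbc[\Gr(k,n)]$ with $\mu_G^*g=P_G$. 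Applying Theorem~\ref{thm:welldefined} to networks on two different top-cell graphs shows that $g$ is independent of $G$ (the two rational functions agree on the dense overlap $Y_G\cap Y_{G'}$), and this together with the density of $Y_G$ yields the asserted uniqueness once regularity is proved.

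The heart of the argument — and the step I expect to be the main obstacle — is to show that $g$ is actually regular, i.e.\ $g\in\bbc[\Gr(k,n)]$, since this does \emph{not} hold for the descent of an arbitrary polynomial along an arbitrary dominant morphism. As $\bbc[\Gr(k,n)]$ is normal, it suffices to exclude a pole of $g$ along each prime divisor $D\subset\tGr(k,n)$. The polar divisor of $g$ lies in $V(b)$ for some nonzero homogeneous $b\in\bbc[\Gr(k,n)]$; because $\mu_G^*(b)$ is a gauge semi-invariant, its zero locus in $(\bbc^*)^E$ is a pure-codimension-one union of $\mu_G$-fibres, hence (by a dimension count) maps onto a dense subset of $V(b)$, so every component $D$ of the polar divisor has its generic point in $Y_G$. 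Then some prime divisor $D'\subset(\bbc^*)^E$ dominates $D$, the induced local map of discrete valuation rings gives $v_{D'}(P_G)=v_{D'}(\mu_G^*g)=e\cdot v_D(g)$ with $e\ge 1$, and a pole of $g$ along $D$ would force $v_{D'}(P_G)<0$, contradicting that $P_G$ is a polynomial. A convenient way to supply the reachability input is to use that $\mu_G$ is dominant onto the open positroid stratum of the top cell, together with the fact that each codimension-one boundary positroid divisor $\{\Delta_J=0\}$ is hit densely (its preimage $\{\Delta_J(N)=0\}$ is a pure-codimension-one union of fibres), so that $\tGr(k,n)\setminus Y_G$ has codimension $\ge 2$; in the degenerate case where some $\Delta_J(N)$ is a monomial one falls back on varying $G$ over all reduced top-cell graphs and using that $g$ is their common descent.

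Finally, splitting $g$ into $\bbz^n$-graded pieces and using that $\mu_G^*$ is injective and carries $\bbc[\Gr(k,n)]_\mu$ into polynomials of boundary-edge multidegree $\mu$, the fact that $P_G$ is purely of boundary-edge multidegree $\lam$ forces $g\in\bbc[\Gr(k,n)]_\lam$. I expect the polynomiality and homogeneity of $P_G$, the identification of the fibres of $\mu_G$, and this closing grading bookkeeping to be routine; the delicate point is the pole analysis, which genuinely uses the geometry of the boundary measurement map — its dominance, the gauge torus action (so that zero loci of the relevant functions are unions of fibres), and the density of the boundary positroid divisors in the image.
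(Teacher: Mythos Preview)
Your approach is valid in principle but takes a far longer and more treacherous path than the paper's proof, and the step you correctly flag as delicate is not fully justified.

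The paper's argument is almost a one-liner. It uses Proposition~\ref{lem:PluckerstoWebr}: for the functional $\eval(E_{\mcs})\in\mcw_\lam(U)^*$ one has
\[
\eval(E_\mcs)(\Web_r(N;\lam)) \;=\; \sgn(\mcs)\,\Delta_{I_1}(N)\cdots\Delta_{I_r}(N),
\]
which is visibly the restriction to $\{\tX(N)\}$ of the regular function $\sgn(\mcs)\Delta_{I_1}\cdots\Delta_{I_r}\in\bbc[\Gr(k,n)]_\lam$. Since the $\eval(E_\mcs)$ span $\mcw_\lam(U)^*$, every $\varphi$ is a linear combination of these, and linearity finishes the proof. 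No descent, no normality, no pole analysis. Note also that Theorem~\ref{thm:welldefined}, which you invoke, is itself deduced in the paper from the same formula, so you are already implicitly carrying this identity around; using it directly short-circuits your entire argument.

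As for the gap in your route: the DVR argument is fine once you exhibit, for each prime divisor $D\subset\tGr(k,n)$, a prime divisor $D'\subset(\bbc^*)^E$ with $\mu_G(D')$ dense in $D$. Your justification for this is the weak point. Saying that $\mu_G^*(b)$ is a gauge semi-invariant tells you its vanishing locus is gauge-stable, but it does not follow that each irreducible component of $V(b)\subset\tGr(k,n)$ is dominated by a component of $V(\mu_G^*b)$; you could land entirely in a different component. The fallback ``vary $G$ over all reduced top-cell graphs'' needs the statement that $\tGr(k,n)\setminus\bigcup_G\overline{Y_G}$ has codimension $\ge 2$, which you have not established and which itself requires nontrivial input about which points of boundary positroid strata are boundary measurements of \emph{top-cell} networks with nonzero edge weights. (A point in a lower stratum is some $\tX(N')$, but for $N'$ on a smaller graph $G'$, not on $G$.) This can be made to work, but it is real labor, and it is entirely bypassed by the paper's explicit formula.
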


We will prove Proposition~\ref{prop:annoying} in Section~\ref{secn:proofofmain}. It leads naturally to the following definition: 
\begin{defn}[Immanant map]\label{defn:immanants} The linear map $\Imm: \mcw_\lam(U)^* \to \bbc[\Gr(k,n)]_\lam$ defined by $\Imm(\varphi)(\tX(N)) = \varphi(\Web_r(N;\lam))$ is called the \emph{immanant map}.  
\end{defn}

Another of our main theorems is the following. 

\begin{thm}\label{thm:immanants} The immanant map $\Imm: \mcw_\lam(U)^* \to \bbc[\Gr(k,n)]_\lam$ is an isomorphism. 
\end{thm}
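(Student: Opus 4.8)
The plan is to prove that $\Imm \colon \mcw_\lam(U)^* \to \bbc[\Gr(k,n)]_\lam$ is an isomorphism by showing it is an injective linear map between two vector spaces of the same dimension. The equality of dimensions should already be essentially known: $\dim \bbc[\Gr(k,n)]_\lam = \dim \Hom_{\SL_k}(\bigotimes_j \Sym^{\lam_j}(\bbc^k),\bbc)$, and by Howe duality (or a direct Schur–Weyl/Cauchy-identity computation) this equals $\dim \mcw_\lam(U) = \dim \mcw_\lam(U)^*$, namely the number of $r\times k$ tableaux of content $\lam$ with the stated monotonicity. So the whole content is \emph{injectivity} of $\Imm$, equivalently that the tensor invariants $\{\Web_r(N;\lam)\}$, as $N$ ranges over all networks on a fixed top-cell graph $G$, span all of $\mcw_\lam(U)$; for if they span, then no nonzero $\varphi$ can annihilate every $\Web_r(N;\lam)$, and $\Imm(\varphi)=0$ forces $\varphi=0$.

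First I would reduce to a spanning statement. By Proposition~\ref{prop:annoying}, $\Imm(\varphi)$ is a well-defined element of $\bbc[\Gr(k,n)]_\lam$, independent of $G$; so to show $\Imm$ is injective it suffices to exhibit, for each nonzero $\varphi \in \mcw_\lam(U)^*$, a single network $N$ with $\varphi(\Web_r(N;\lam)) \neq 0$. Equivalently, I want the $\bbc$-span of $\{\Web_r(N;\lam) : N \text{ a network on } G\}$ to be all of $\mcw_\lam(U)$. The strategy is to vary the edge weights of a well-chosen $G$ and extract many linearly independent webs. Concretely, if $G$ represents the top cell and $W_1,\dots,W_m$ are the $r$-weblike subgraphs of $G$ of degree $\lam$, then $\Web_r(N;\lam) = \sum_i \wt_N(W_i)\,\mathbf{W}_i$, a Laurent-polynomial-in-edge-weights combination of the fixed tensor invariants $\mathbf{W}_i$. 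Choosing edge weights generically (or taking enough specializations and using a Vandermonde-type nonvanishing argument on the distinct weight monomials $\wt(W_i)$) shows that the span of the $\Web_r(N;\lam)$ over all $N$ equals $\spann_\bbc\{\mathbf{W}_i : \lam(W_i)=\lam\}$. So it remains to show that the $r$-weblike subgraphs of a suitable top-cell graph $G$ yield tensor invariants that span $\mcw_\lam(U)$.

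For this last point I would use the known surjectivity of the web map $\mcf\mcs_\lam(r) \to \mcw_\lam(U)$ together with a resolution/refinement argument: an arbitrary $\SL_r$-web of degree $\lam$ can, using (M2)-type bivalent insertions and the fact that the wedge/shuffle vertices are exactly the vertex types occurring in $r$-weblike subgraphs, be embedded as an $r$-weblike subgraph of \emph{some} planar bipartite graph $G'$; and any such $G'$ embeds into (a move-equivalent refinement of) a fixed top-cell graph $G$ on $n$ boundary vertices of the appropriate excedance, so that the given web occurs among the $W_i$ for $G$. Since webs span $\mcw_\lam(U)$, and $\Imm$ is $G$-independent, this gives the spanning statement and hence injectivity; combined with the dimension count, $\Imm$ is an isomorphism. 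I expect the main obstacle to be precisely this geometric step — realizing every web of degree $\lam$ as an $r$-weblike subgraph inside one fixed top-cell graph — since it requires care with the boundary multiplicities, with the fact that an $r$-weblike subgraph must use \emph{all} vertices of $G$ (forcing one to pad with matched/unmatched edges), and with keeping the underlying graph planar bipartite of the correct $(k,n)$; an alternative route, if that proves awkward, is to instead count dimensions on the Grassmannian side directly (showing $\{\Imm(\varphi)\}$ separates points of the image variety, or computing the rank of $\Imm$ via a generic point) and thereby bypass the explicit realization.
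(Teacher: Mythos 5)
There is a genuine gap, and it sits exactly where you predicted: the step ``every $\SL_r$-web of degree $\lam$ is realized as an $r$-weblike subgraph of a single fixed top-cell graph $G$'' is not established, and it is not a routine geometric manipulation. The difficulty is not only padding (an $r$-weblike subgraph must use \emph{all} vertices of $G$, with multiplicities summing to $r$ at each interior vertex) but the top-cell condition itself: Theorem~\ref{thm:positroidspanning} shows that the weblike subgraphs of a graph $G$ with positroid $\mcm$ span only the proper subspace $\mcw_\lam(U)(\mcm)$, so embedding your web into ``some'' planar bipartite graph is not enough — the ambient graph must represent the top cell, and verifying that after your insertions is real work. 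Worse, the spanning statement you need (that the $\mathbf{W}$ for $W\subset G$ top-cell span all of $\mcw_\lam(U)$) is obtained in the paper only as a \emph{consequence} of Theorem~\ref{thm:immanants} (via Theorem~\ref{thm:positroidspanning} and the pairing \eqref{eq:pairingdumbeddownII}–\eqref{eq:pairingdumbeddown}), and the paper explicitly flags it as a surprising fact; using it as an input here would either be circular or require a new independent argument that your proposal does not supply. Your Vandermonde reduction from $\{\Web_r(N;\lam)\}$ to $\{\mathbf{W}_i\}$ is fine and matches the paper's argument in the proof of Theorem~\ref{thm:kernel}, but it only shifts the burden onto the unproved spanning claim.

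The fix is to prove surjectivity instead of injectivity, which is essentially free from what is already in hand. Proposition~\ref{lem:PluckerstoWebr} says $\Imm(\eval(E_\mcs)) = \sgn(\mcs)\,\Delta_{I_1}\cdots\Delta_{I_r}$; the functionals $\eval(E_\mcs)$ span $\mcw_\lam(U)^*$ (the $E_\mcs$ span $\bigotimes_i \bigwedge^{\lam_i}(U)$), and the $r$-fold products $\Delta_{I_1}\cdots\Delta_{I_r}$ with $\mci$ satisfying \eqref{eq:IMultiset} span $\bbc[\Gr(k,n)]_\lam$. Hence $\Imm$ is surjective, and your (correct) dimension count upgrades this to an isomorphism. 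This is the paper's route; your injectivity route would, once completed, yield the stronger spanning statement directly, but as written it does not close.
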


Thus, we obtain a canonical pairing of $\mcw_{\lam}(U)$ with $\bbc[\Gr(k,n)]_\lam$, described more explicitly in \eqref{eq:pairingdumbeddownII} and \eqref{eq:pairingdumbeddown}. In the multilinear case $\lam = (1,\dots,1)$, this pairing is the unique (up to scalars) $S_n$-equivariant pairing of the web spaces $\mcw(r,n)$ and $\mcw(k,n)$, cf.~Section~\ref{secn:duality}. 

\begin{rmk}
When $r$ is $2$ or $3$, the second author previously defined $\SL_2$- and $\SL_3$-\emph{web immanants} \cite{LamDimers}. These are obtained via \eqref{eq:pairing} when the functional $\varphi \in \mcw(r,n)^*$ is the dual functional to an element of the web basis. In the case that $r \geq 4$, since we are without a notion of a web basis, we believe the $r$-fold boundary measurement $\Web_r(N)$ is the more fundamental object. 
\end{rmk}

\section{Proof of the main theorems}\label{secn:proofofmain}

\begin{defn}[Consistent labeling] Let $W$ be an $r$-weblike subgraph of a planar bipartite graph $G$ with $n$ boundary vertices, and with boundary edges $b_1,\dots,b_n$. A \emph{consistent labeling of $W$} is a labeling of each edge $e$ in $W$ by a subset $S(e) \subset [r]$ so that $|S(e)| = m(e)$, and such that the union of the sets around each interior vertex is $[r]$. Equivalently, one can require that the sets $S(e)$ are disjoint around each vertex.
\end{defn}

Let $\ell$ be a consistent labeling. From the following pair of equalities of multisets
\begin{align*}
\bigcup_{\text{all edges}} S(e) \, &= \bigcup_{\text{white vertices}} [r] \\
\bigcup_{\text{all non-boundary edges $e$}} S(e) \, &= \bigcup_{\text{interior black vertices}} [r],
\end{align*}
it follows that the multiset $S(b_1) \cup \cdots \cup S(b_n)$ satisfies 
\begin{equation}\label{eq:SMultiset}
S(b_1) \cup \cdots \cup S(b_n) = \{1^k,2^k,\dots,r^k\}.
\end{equation}
We refer to $\mcs = (S(b_1),\dots,S(b_n))$ as a list of \emph{boundary label subsets}. 

We can also associate to any labeling $\ell$ a list of \emph{boundary location subsets} $\mci = (I_1,\dots,I_r) \subset [n]$, defined by $I_i = \{j \in [n] \colon \, i \in S(b_j)\}$. From the preceding argument, we know that each~$|I_i|$ has size~$k$. Furthermore,
\begin{equation}\label{eq:IMultiset}
I_1 \cup \cdots \cup I_r = \{1^{\lam_1},2^{\lam_2},\dots,n^{\lam_n}\}
\end{equation}
as multisets, where $\lam$ is the degree of $W$.  
 
The data of boundary labels $\mcs = (S_1,\dots,S_n) \subset [r]$ satisfying \eqref{eq:SMultiset}, and the data of boundary locations $\mci = (I_1,\dots,I_r) \subset [n]$, satisfying \eqref{eq:IMultiset}, are dual to each other (in the sense of combinatorial design theory), i.e. one can be recovered from the other. 

Recall we have a vector space $U$ with basis $E_1,\dots,E_r$ satisfying $E_1 \wedge E_2 \wedge \cdots \wedge E_r = 1$. To any subset $S \subset [r]$ with $|S| = a$ we can associate the tensor $E_S \in \bigwedge^a(U)$ by taking the wedge of the basis vectors labeled by $S$, in ascending order. If $\mcs = (S_1,\dots,S_n)$ is a list of boundary label subsets,  we obtain in this way a tensor $E_{\mcs} \in \bigotimes_{j=1}^n \bigwedge^{\lam_j}(U)$ by 
\begin{equation}\label{eq:tensorfromSs}
E_{\mcs} = E_{S_1} \otimes E_{S_2} \otimes \cdots \otimes \cdots \otimes  E_{S_n}. 
\end{equation}

It will be convenient to replace the multiset $\{1^k,2^k,\dots,r^k\}$ by an honest set $\mca = \{1_1,1_2,\dots,1_k,2_1,2_2,\dots,2_k,\dots,r_1,\dots,r_k\}$ which we refer to as the \emph{alphabet}. We introduce the obvious total order $1_1 < 1_2 < \cdots < 1_{k} < 2_1 < \cdots < r_{k}$ on $\mca$. We let $\bbc \la \mca \ra $ be the free vector space with basis $\mca$, and with volume form $1_1 \wedge 1_2 \wedge  \cdots \wedge r_k = 1 \in \bigwedge^{rk}(\bbc \la \mca \ra).$ We will think of the numbers $1,\dots,r$ as \emph{colors}: for $i_s,j_t \in \mca$ we say that $i_s$ and $j_t$ have the same \emph{color} if $i=j$. 

If we read the indices of the basis vectors in \eqref{eq:tensorfromSs} from left to right, we get a word $w'(\mcs)$ in which each of the numbers $1,\dots,r$ appears exactly $k$ times.  We let $\sgn(\mcs)  = 1$ or $-1$ depending on whether the number of inversions of $w'(\mcs)$ is even or odd. Equivalently, we can think of the entries of $w'(\mcs)$ as elements of $\mca$ by making the subscripts increase by one from left to right.
We let $w(\mcs) \in \mca$ denote the word obtained from $w'(\mcs)$ in this way. Then $\sgn(\mcs)$ is the value of the wedge product of $w(\mcs) \in \bigwedge^{rk}(\bbc \la \mca \ra)$. 

\begin{example}\label{eg:SL4WebEvaluationctd} Let $G$ be the bipartite graph underlying Example~\ref{eg:SL4WebEvaluation}. Here is a particular $4$-weblike subgraph $W\subset G$, and a consistent labeling $\ell$ of $W$: 
\begin{equation}\label{eq:SL4WebEvaluationctd}
\begin{tikzpicture}[scale=.85]
\node at (190:2.4cm) {3};
\node at (145:2.4cm) {2};
\node at (100:2.4cm) {1}; 
\node at (50:2.4cm) {8};
\node at (5:2.4cm) {7};
\node at (-40:2.4cm) {6};
\node at (-85:2.4cm) {5};
\node at (-130:2.4cm) {4};
\node at (-4,0) {$W = $};

\draw[dashed] (0,0) circle (2cm);
\draw[thick] (0,0)--(15:.9cm) node[pos = .5, red, above] {\tiny 2};
\draw [thick] (15:1cm)--(50:1.9cm);  
\draw [thick] (5:2cm)--(15:1.15cm);
\draw [thick] (135:1cm)--(135:.15cm); 
\draw [thick] (95:2cm)--(130:1.1cm);
\draw [thick] (145:2cm)--(136:1.15cm);
\draw [thick] (190:2cm)--(140:1cm);
\draw [thick](0,0)--(250:.9cm);
\draw [thick] (235:2cm)-- (245:1.13cm);
\draw [thick] (280:2cm)--(255:1.13cm);
\draw [thick] (250:1cm)--(325:1.9cm); 
\filldraw[black] (0,0) circle (0.1cm);
\filldraw[white] (15:1cm) circle (.1cm);
\draw (15:1cm) circle (.1cm);
\filldraw[white] (135:1cm) circle (.1cm);
\draw (135:1cm) circle (.1cm);
\filldraw[white] (250:1cm) circle (.1cm);
\draw (250:1cm) circle (.1cm);

\begin{scope}[xshift = 7cm]
\node at (190:1.5cm) {\tiny $3$};
\node at (150:1.5cm) {\tiny $2$};
\node at (115:1.5cm) {\tiny $1$};
\node at (0:1.5cm) {\tiny $1$};
\node at (-80:1.5cm) {\tiny $3$};
\node at (-130:1.5cm) {\tiny $4$};
\node at (-3,0) {$\ell = $};

\draw[dashed] (0,0) circle (2cm);
\draw[thick] (0,0)--(15:.9cm) node[pos = .5, above] {\tiny $23$};
\draw [thick] (15:1cm)--(50:1.9cm) node[pos = .7, left] {\tiny $4$};
\draw [thick] (5:2cm)--(15:1.1cm);
\draw [thick] (135:1cm)--(135:.15cm) node[pos = .8, left] {\tiny $4$};
\draw [thick] (95:2cm)--(130:1cm);
\draw [thick] (145:2cm)--(136:1.15cm);
\draw [thick] (190:2cm)--(140:1cm);
\draw [thick](0,0)--(250:.9cm) node[pos = .5, left] {\tiny $1$};
\draw [thick] (235:2cm)-- (245:1.13cm);
\draw [thick] (280:2cm)--(255:1.13cm);
\draw [thick] (250:1cm)--(325:1.9cm) node[pos = .5, above] {\tiny $2$};
\filldraw[black] (0,0) circle (0.1cm);
\filldraw[white] (15:1cm) circle (.1cm);
\draw (15:1cm) circle (.1cm);
\filldraw[white] (135:1cm) circle (.1cm);
\draw (135:1cm) circle (.1cm);
\filldraw[white] (250:1cm) circle (.1cm);
\draw (250:1cm) circle (.1cm);
\end{scope}
\end{tikzpicture}.
\end{equation}
The labeling $\ell$ of $W$ has boundary location subsets $\mci = (\{1,7\},\{2,6\},\{3,5\},\{4,8\})$ and boundary label subsets $\mcs = (\{1\},\{2\},\{3\},\{4\},\{3\},\{2\},\{1\},\{4\})$. The tensor $E_\mcs$ \eqref{eq:tensorfromSs} is the 8-fold tensor $E$ from Example~\ref{eg:SL4WebEvaluation}. The sign $\sgn(\mcs)$ is given by  $1_1 \wedge 2_1\wedge 3_1\wedge 4_1 \wedge 3_2 \wedge 2_2 \wedge 1_2 \wedge 4_2 = -1$. The $\SL_4$-web $\hat{W}$ from Example~\ref{eg:SL4WebEvaluation} is a tagging of $W$, obtained by directing the edges of $G$ according to \eqref{eq:SL4WebEvaluation} (cf.~Remark~\ref{rmk:perfectorientations}). The labeling $\ell$ of $W$ determines the flow of tensors along the edges of $\hat{W}$ pictured in \eqref{eq:SL4WebEvaluation}, specifically we replace $S$ in $\ell$ by $E_S$ for edges directed black to white and by $E_{[4] \backslash S}$ for edges directed white to black.   
\end{example}

As Example~\ref{eg:SL4WebEvaluationctd} suggests, consistent labelings of $W$ with boundary $\mcs$ are closely related to evaluating $\hat{W}$ on the tensor $E_\mcs$.

\begin{defn}
Denote by $a(\mcs;W)$ the number of consistent labelings of $W$ with fixed boundary label subsets $\mcs$. 
\end{defn}

The following seemingly innocuous lemma is the key technical result underpinning our main theorems. The proof of the lemma is somewhat intricate, and has minimal bearing on the rest of the paper, so it may be skipped on a first reading.

\begin{lem}\label{lem:eora}
Let $W$ be an $r$-weblike subgraph of a planar bipartite graph $G$, and $\hat{W} = \hat{W}(W,\mco)$ a choice of tagging for $W$. 
Then there is a sign $\sgn(W,\mco) \in \{\pm 1\}$, such that for any list of boundary label subsets $\mcs$, 
we have 
\begin{equation}\label{eq:eoralem}
\hat{W} \big|_{E_{\mcs}} =  \sgn(\mcs) \, \sgn(W,\mco) \, a(\mcs; W).
\end{equation}
\end{lem}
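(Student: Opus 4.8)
The plan is to compute $\hat W|_{E_{\mcs}}$ directly by unwinding the definition of the evaluation map, and to identify the contributing terms with consistent labelings of $W$. We fix the tagging $\mco$ once and for all; this gives us a global acyclic orientation of $\hat W$, so the evaluation is an honest iterated composition of the four basic morphisms (wedge, shuffle, pair, source) along the directed edges. First I would observe that, expanding the shuffle maps \eqref{eq:forkeq} into their defining sums \eqref{eq:shufflesigns} and expanding the pair maps, the evaluation $\hat W|_{E_{\mcs}}$ becomes a sum over all ways of routing basis vectors: each boundary vector $E_{S_j}$ is a wedge of $\lam_j$ basis vectors $E_i$, and at each shuffle/source vertex the incoming wedge is split into blocks, one per outgoing edge. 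A term in this expansion is exactly the data of which subset of colors $\{1,\dots,r\}$ flows along each directed edge. Along an edge directed black-to-white carrying the tensor factor $\bigwedge^{a}(U)$, the subset has size $a = m(e)$; along an edge directed white-to-black the "complementary" subset $[r]\setminus S$ is the one recorded by the labeling convention of Example~\ref{eg:SL4WebEvaluationctd}. The wedge map at a white vertex is nonzero on a term precisely when the incoming subsets are pairwise disjoint, and since their sizes sum to $r$ this forces their union to be $[r]$; dually at a black (shuffle) vertex the blocks partition the incoming set. Hence the nonzero terms in the expansion of $\hat W|_{E_{\mcs}}$ are in bijection with consistent labelings $\ell$ of $W$ having boundary label subsets $\mcs$. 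This already gives $|\hat W|_{E_{\mcs}}| \le a(\mcs;W)$ and pins down that the set of contributing terms has the right cardinality; what remains is the sign bookkeeping.

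The heart of the argument is to show that every such term contributes the \emph{same} sign, namely $\sgn(\mcs)\,\sgn(W,\mco)$ for a sign $\sgn(W,\mco)$ depending only on $(W,\mco)$ and not on $\mcs$ or $\ell$. The sign attached to a single labeling $\ell$ is a product of local signs: the permutation sign and global sign appearing at each shuffle/source vertex in \eqref{eq:shufflesigns}, together with the signs from reordering wedges at black wedge-vertices and from the pair tags $\bigwedge^a\otimes\bigwedge^{r-a}\to\bigwedge^r\cong\bbc$. The key point is that all of these local signs can be read off from a single global object: order the $rk$ total "color slots" — one slot for each appearance of a color, i.e. for each element of the alphabet $\mca$ — and observe that the accumulated sign of the term $\ell$ equals the sign of the permutation that reorders the colors as they are "created" (reading outward from the black wedge vertices) compared to the order in which they are "consumed" (at the pair tags), relative to the fixed total order on $\mca$. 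Concretely I would: (i) assign to each directed edge of $\hat W$ the word in $\mca$ obtained by taking the colors flowing along it with subscripts chosen consistently (this is well defined once we fix, for each color $i$, the left-to-right order of its $k$ slots as in the definition of $w(\mcs)$); (ii) check that the wedge, shuffle, pair, and source morphisms each transform these words by a permutation whose sign is exactly the local sign in the definition; (iii) telescope over the acyclic graph $\hat W$ to conclude that $\hat W|_{E_{\mcs}}$ equals $\prod_{\text{terms}}(\pm 1)$ where each $\pm 1$ is $\sgn(w(\mcs))$ — which is $\sgn(\mcs)$ by definition — times a fixed sign coming only from the combinatorics of how $\mco$ orients and tags $W$.

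Two subtleties deserve care, and I expect step (ii)–(iii) above — proving the sign is \emph{independent of the labeling} $\ell$ — to be the main obstacle. The first subtlety is the "global sign" factor in \eqref{eq:shufflesigns}, i.e.\ the sign of the permutation $(b-a_s+1,\dots,b,\dots,1,\dots,a_1)$; this is a fixed permutation at each vertex, so it contributes to $\sgn(W,\mco)$ but not to the $\mcs$-dependence — I would verify this carefully since it is where an error would most naturally creep in. The second is that a priori the local sign at a shuffle vertex depends on \emph{which} colors are split into which block, not just on the block sizes; the resolution is that, once we track the full $\mca$-word rather than just the color multiset, the sign becomes the sign of an honest permutation of $\mca$-letters, and permutations compose, so the total is $\sgn(w(\mcs))$ regardless of the intermediate choices — this is precisely the statement that the "reordering of $\mca$-slots" is a well-defined group-theoretic quantity. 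Granting this, define $\sgn(W,\mco)$ to be the product of all the fixed (labeling-independent) local signs; then \eqref{eq:eoralem} follows by assembling (i)–(iii). Finally I would sanity-check the formula against Examples~\ref{eg:SL4WebEvaluation} and~\ref{eg:SL4WebEvaluationctd}, where $a(\mcs;W)=1$, $\sgn(\mcs)=-1$, and $\hat W|_{E} = -1$, forcing $\sgn(W,\mco)=+1$ for that particular tagging.
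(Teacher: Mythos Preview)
Your proposal correctly identifies the bijection between nonzero terms in the evaluation and consistent labelings of $W$ with boundary $\mcs$; this is the easy part and matches the paper. The genuine gap is in your sign analysis, specifically in the claim that ``once we track the full $\mca$-word \ldots the total is $\sgn(w(\mcs))$ regardless of the intermediate choices.'' This is precisely the hard statement, and ``permutations compose'' does not prove it. Composing the local permutations for a given labeling $\ell$ gives a well-defined total sign for \emph{that} $\ell$; what you must show is that this total sign is the \emph{same} for every consistent labeling $\ell$ with boundary $\mcs$. But two different consistent labelings route different $\mca$-letters along interior edges, so the local permutation at each shuffle vertex genuinely changes with $\ell$, and the endpoints of the $rk$ paths (which $\mca$-letter lands at which tag, and in which order) change with $\ell$ as well. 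There is no fixed ``target word'' at the tags for your telescoping to land on.

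The paper's proof makes this explicit: after the soft reduction you sketch, it isolates the residual $\ell$-dependence as the parity of inversions \emph{between elements of the same color} landing at different tags, and then proves this parity is constant by a topological argument that uses planarity in an essential way (cutting the disk open, defining a nesting number of paths with vertical rays from the tags, and showing this nesting number is invariant under resolving crossings). Your proposal never invokes planarity, and without it the claimed sign-coherence need not hold: the step you flagged as ``the main obstacle'' is indeed the obstacle, and your resolution of it is where the argument breaks. To repair the proof you need some geometric input---either the paper's nesting-number argument or an equivalent planar invariant---to control how same-color paths distribute themselves among the tags.
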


It is fairly easy to see (and we elaborate on this below) that consistent labelings with boundary $\mcs$ give rise to terms in the evaluation of $\hat{W}$ on $E_\mcs$. Therefore, the key assertion in this lemma is that each of these terms contributes to the evaluation with the same sign. 
 Let us also emphasize that the sign $\sgn(W,\mco)$ is independent of the boundary input $E_\mcs$. 
\medskip

Thus for any $r$-weblike subgraph $W$, the tensor invariant $\textbf{W} = \sgn(W,\mco)\hat{W}(W,\mco) \in \mcw_\lam(U)$ of
\eqref{eq:eoraII} is characterized by the equation
\begin{equation}\label{eq:eora}
\textbf{W}\big|_{E_{\mcs}} =  \sgn(\mcs) \, a(\mcs; W).
\end{equation}
for every list of boundary label subsets $\mcs$.

\begin{proof}
Let us consider a consistent labeling $\ell$ with boundary label subsets $\mcs$. The choice of tagging on $\hat{W}$ prescribes how to evaluate $\hat{W}$ on $E_\mcs$. Our first goal is to explain why the consistent labeling $\ell$ gives a term in the evaluation of $\hat{W}$ on $E_\mcs$. This evaluation involves many terms because of the vertices where we perform the shuffle operation. 

For a given labeling $\ell$, we can place tensors on the edges of $\hat{W}$ as governed by $\ell$. If an edge (or half-edge if the edge is tagged) is labeled by the set $S$ in $\ell$, let us place the tensor $E_S$ along that edge if it is directed black to white and place the complementary tensor $E_{[r] \backslash S}$ along the edge if it is directed white to black. A tensor along a given edge in a given term represents some partial stage of the evaluation $\hat{W}$ on $E_\mcs$. Because $\ell$ is consistent, the incoming flow of basis vectors equals the outgoing flow at each interior vertex, and no two basis vectors flow through the same interior vertex. Thus, once we have placed the appropriate tensor on each edge of $\hat{W}$ as indicated by $\ell$, we get a sign for this term which contributes to the final evaluation.   




Thus, each consistent labeling corresponds to a term in $\hat{W}\big{|}_{E_\mcs}$, and the nonzero terms in the evaluation of $\hat{W}$ on $E_\mcs$ are counted by $a(\mcs; W)$. The nontrivial assertion is that once $\mcs$ is fixed, every term contributes with the same sign. The content of the proof lies in understanding these signs. The signs come from two places: the shuffles at interior vertices and the pairings at each tag.

We begin our sign analysis by associating to a consistent labeling $\ell$ a flow of vectors along the directed edges of $\hat{W}$. Specifically, if $i \in S \subset [r]$ and $E_S$ is at a given edge (or half-edge) of this term, then we say that the basis vector $E_i$ flows along that edge. The evaluation begins with tensors $E_{S_1},\dots,E_{S_n}$ at the boundary vertices. Each of these $E_{S_i}$ is itself a wedge product of basis vectors. By keeping track of how such a boundary basis vector flows along the web, we get a path from the boundary to a tag. 

In this way, the labeling $\ell$ provides us with $rk$ paths starting at boundary sources and ending at tags. These paths are naturally indexed by the multiset $\{1^k,\dots,r^k\}$, and we index them by elements of the alphabet $\mca$ by adding subscripts in counterclockwise order along the boundary. 

For the sake of simplicity, we will assume that there are no interior sources in $\hat{W}$, so that the term is completely described by the union of these $rk$ paths (if there are interior sources, the argument below can be repeated with slight modifications). 

Let us take these $rk$ paths from the boundary to the tags and isotope them slightly so that they are disjoint at the boundary, and are noncrossing except in transverse crossings in small neighborhoods of interior shuffle vertices. Thus when $a$ of these paths flow along an edge of multiplicity $a$, the $a$ paths have a natural order from left to right. For each tag there will be exactly $r$ paths ending at the tag.

\begin{defn*} A \emph{flow} on a tagged web $\hat{W}$ is a collection of $rk$ paths starting at the boundary and ending at tags, where the number of paths flowing along each edge is the multiplicity of the edge, and where the paths are labeled by the alphabet $\mca$. We will usually isotope these paths so that we get $a$ parallel paths along each edge of multiplicity $a$ and so that the crossings of these paths are concentrated in a neighborhood of shuffle vertices. 
\end{defn*}

To summarize, a consistent labeling $\ell$ provides us with a flow, which we will also call $\ell$. The consistent labeling from Example~\ref{eg:SL4WebEvaluationctd}, which corresponds to the term in the evaluation computed in Example~\ref{eg:SL4WebEvaluation}, has the following flow:
\begin{equation}\label{eq:floweg}
\begin{tikzpicture}[scale = .8]
\node at (190:2.4cm) {$3_1$};
\node at (145:2.4cm) {$2_1$};
\node at (100:2.4cm) {$1_1$};
\node at (50:2.4cm) {$4_2$};
\node at (5:2.4cm) {$1_2$};
\node at (-40:2.4cm) {$2_2$};
\node at (-85:2.4cm) {$3_2$};
\node at (-130:2.4cm) {$4_1$};
\draw[dashed] (0,0) circle (2cm);

\draw [rounded corners] (100:2cm)--(130:1cm)--(0,.1)--(250:1cm)--(330:1.3cm);
\draw [rounded corners,red] (145:2cm)--(135:1cm)--(0,0)--(17:.85cm)--(60:1.5cm);
\draw [rounded corners,blue] (190:2cm)--(140:1cm)--(0,-.1)--(10:1cm)--(60:1.5cm);
\draw [rounded corners,green] (-130:2cm)--(250:1.1cm)--(330:1.3cm);
\draw [rounded corners,blue] (-85:2cm)--(250:1.2cm)--(330:1.3cm);
\draw [rounded corners] (5:2cm)--(15:1.1cm)--(60:1.5cm);
\draw [red] (-40:2cm)--(330:1.3cm);
\draw [green] (50:2cm)--(60:1.5cm);
\end{tikzpicture}
\end{equation}

We will now consider flows which do not necessarily come from consistent labelings. In a general flow, it is not required that the paths along each edge are of different colors (likewise for the paths flowing into a tag). This requirement holds precisely for flows coming from consistent labelings. As an intermediate step, we will associate a sign $\eps(\ell) = \eps(\ell,\mco)$ to \emph{any} flow $\ell$ on $\hat{W}$, not necessarily coming from a consistent labeling. The sign $\eps(\ell)$ is a topological invariant of the $rk$ paths from the boundary to the tags. It knows nothing about structure of the set $\mca$ other than that it consists of $rk$ elements (and thus it does not know about the colors of these $rk$ elements).

Let $t_1,\dots,t_k$ be a list of the tags in $\hat{W}$. For each tag $t_i$, we let $C_i \in \binom \mca r$ be the set of labels of the $r$ paths ending at $t_i$. We refer to $C_i$ as a \emph{tag subset}. Within each tag subset, the terms are ordered (from left to right) according to how they flow into the tag, where we recall that if $x$ and $y$ are complementary tensors flowing into a tag and $x$ is left of $y$ (when the tag points north) then the tensors are ordered $x \wedge y$. The ordered set $C_i$ naturally gives rise to an element of $\bigwedge^{r}\bbc \la \mca \ra$, which by abuse of notation we also call $C_i$. Then the sign $\eps(\ell)$ is defined by 

$$\eps(\ell) := C_1 \wedge C_2 \wedge \cdots \wedge C_k \in \bigwedge^{rk}\bbc \la \mca \ra.$$

For any list of boundary subsets $\mcs$, there is a \emph{canonical} way of flowing according to $\mco$, namely the one in which strands cross maximally at each interior shuffle vertex.  According to \eqref{eq:shufflesigns}, there is no sign at interior shuffle vertices associated with such a flow. We let $\ell_{\text{canon}}(\mcs)$ denote the canonical flow with boundary $\mcs$. This canonical flow is a device that gives us a reference point for analyzing signs. 

Let $\mcs_0$ be list of boundary label subsets whose word is lexicographically smallest, i.e. $w(\mcs_0) = 1_1 \, 1_2 \, 1_3 \, \dots \, r_k$. For any other choice of boundary label subsets $\mcs$, the canonical flows have signs related by 
\begin{equation}\label{eq:signsI}
\eps(\ell_{\text{canon}}(\mcs_0)) =  \eps(\ell_{\text{canon}}(\mcs))\, \sgn(\mcs).
\end{equation}
Now suppose we have a consistent labeling $\ell$ with boundary subset $\mcs$. We can get from $\ell_{\text{canon}}(\mcs)$ to $\ell$ by performing a sequence of swaps at shuffle vertices. Clearly, each such swap changes the sign $\eps$ by a factor of $-1$. We have that 
\begin{equation}\label{eq:signsII}
\eps(\ell) = \eps(\ell_{\text{canon}}(\mcs)) \, (-1)^{\text{$\#$ of shuffle swaps $\ell_{\text{canon}}(\mcs) \to \ell$}} = \eps(\ell_{\text{canon}}(\mcs)) \, (\text{sign of shuffle swaps}).
\end{equation}

Now we compute $\eps(\ell)$ by computing the number of transpositions required to transform $\eps(\ell)$ into  $1_1 \wedge 1_2 \wedge \cdots \wedge r_k$. First, within each tag subset, we can rearrange the terms so that they are increasing in $\mca$. We will refer to this as the \emph{inversions within tags}. In addition to alphabetizing within each tag, we must perform a transposition for each inversion involving two elements of different tag subsets. We will refer to this latter number of inversions as the \emph{inversions between tags}. Thus, we have that 
\begin{equation}\label{eq:signsIII}
\eps(\ell) = (-1)^{\text{inversions within tags}} \, (-1)^{\text{inversions between tags}} = (\text{sign within tags}) \cdot  (\text{sign between tags}). 
\end{equation}

We need one final observation: the contribution of $\ell$ to $\hat{W} \big|_{E_S}$ has sign equal to  (sign of shuffle swaps) times (sign within tags). Putting this together with \eqref{eq:signsI} through \eqref{eq:signsIII}, the sign of the contribution of $\ell$ is thus
\begin{equation}\label{eq:signsIV}
(\text{sign within tags})\,(\text{sign of shuffle swaps}) = \eps(\ell_{\text{canon}}(\mcs_0)) \, (\text{sign between tags})\, \sgn(\mcs). 
\end{equation}
Thus the proof is finished once we show the following: the sign between tags is the same for \emph{any} consistent labeling $\ell$ of $\hat{W}$, regardless of the choice of boundary label subsets $\mcs$. Recall that for a consistent labeling $\ell$ the $rk$ paths are labeled by the alphabet $\mca$ by adding subscripts in counterclockwise order. 

This last claim follows from two subclaims: first, since $\ell$ is a consistent labeling, all of the tag subsets are copies of $[r]$. Thus, the number of inversions between tags involving \emph{elements of a different color} does not depend on $\ell$. For example, if $4$ is in the tag subset $C_3$, then it will be in inversion with the copies of $1,2,3$ in the tag subsets $C_4,C_5,\dots$. Second, we need to check that the total number of inversions \emph{between elements of the same color} (and in different tags) does not depend on the consistent labeling $\ell$. Thus we have reduced the proof of the lemma to the second subclaim, which we now state formally:

\begin{claim*} Fix a tagged web $\hat{W}$. Consider any consistent labeling $\ell$ of $\hat{W}$. Suppose that $\ell$ has boundary label subset $\mcs$. Adding subscripts in counterclockwise order, we get a flow on $\hat{W}$ with paths labeled by $\mca$. Then the signs between tags for elements of $\mca$ of the same color is independent of $\ell$ (and hence $\mcs$).
\end{claim*}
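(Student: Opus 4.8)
The plan is to reduce the claim to a purely topological statement about how paths of a fixed color thread through the tagged web, independent of the combinatorial labeling. The key point is that the subscripts on same-colored paths are assigned by reading boundary sources in counterclockwise order, so for a fixed color $i$, the relative cyclic order of the $k$ boundary endpoints of the color-$i$ paths is always the same (it's just the counterclockwise order of those $k$ boundary vertices, which is dictated by $\mcs$ — but crucially, the pattern of same-color inversions depends only on the \emph{topology} of how these paths are routed, not on which of the $r$ colors we are looking at). So I would first observe that, once we restrict attention to a single color $i$, a consistent labeling determines $k$ vertex-disjoint paths $P_{i_1},\dots,P_{i_k}$ in $\hat{W}$ from boundary sources to tags, and these paths are pairwise non-crossing (any crossing would force two paths of the same color through a common vertex, violating consistency).

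**Next I would** set up the bookkeeping for ``inversions between tags among same-colored elements.'' Fix the color $i$ and consider the $k$ tags $t_1,\dots,t_k$ in some fixed reference order. Each color-$i$ path $P_{i_s}$ ends at some tag $t_{\sigma(s)}$; the number of same-color inversions between tags contributed by color $i$ is $\mathrm{inv}(\sigma)$ plus a correction recording, when two color-$i$ paths end at the \emph{same} tag (impossible here, since each tag receives exactly $r$ paths, one of each color — so actually $\sigma$ is injective into the tag set, and each tag gets exactly one color-$i$ path). Wait — that is the crucial simplification: because $\ell$ is consistent, each tag subset $C_j$ is a copy of $[r]$, so color $i$ appears in \emph{every} tag exactly once. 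Hence the color-$i$ paths are in bijection with the tags, and the between-tags same-color inversion count for color $i$ is determined entirely by the permutation $\sigma_i$ sending ``counterclockwise rank of the $s$-th color-$i$ boundary source'' to ``position of its terminal tag in the reference order.''

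**The main step, and the expected obstacle,** is showing $\sigma_i$ does not depend on $\ell$. Here I would use planarity: the color-$i$ paths are a non-crossing system of $k$ arcs in the disk, each joining a boundary source to a tag, and the tags are fixed marked points of $\hat{W}$. A non-crossing perfect matching between a cyclically ordered set of boundary points and a fixed set of interior points (with the interior points also having a well-defined cyclic/radial position determined by $\hat W$) is unique once we know \emph{which} boundary sources are being used — and the set of color-$i$ boundary sources is exactly $I_i$, which is determined by $\mcs$. More precisely, I would argue that the non-crossing condition, together with the requirement that exactly one color-$i$ path reaches each tag, pins down the routing: given the $k$ source positions and the $k$ tag positions, there is a unique planar non-crossing bijection realizable by disjoint paths in $\hat W$ (this uses that $\hat W$ is a planar graph and that the union of all $rk$ paths, over all colors, exhausts the edge multiplicities, so the color-$i$ subsystem occupies a uniquely determined ``layer''). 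Therefore $\sigma_i$, and hence the same-color between-tags inversion count, depends only on $\mcs$ through $I_i$ — and I must then note that varying $\mcs$ only permutes which color plays which role, so summing over all colors $i=1,\dots,r$ gives a quantity independent of $\mcs$ altogether.

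**I expect the hard part** to be making ``unique planar non-crossing routing'' rigorous, since a priori a consistent labeling could route the color-$i$ strands through the web in topologically distinct ways if the web has nontrivial cycles or if two strands of the same color could be rerouted around an interior vertex. I would handle this by an induction on the number of interior vertices of $\hat W$, peeling off a vertex at a time (wedge or shuffle), and checking that at each wedge/shuffle vertex the color-$i$ strands entering and leaving are forced by the non-crossing + consistency constraints; alternatively, one can phrase it via the observation that the color-$i$ strands, being disjoint and non-crossing with prescribed endpoints in a planar graph, are isotopic rel endpoints, and isotopic planar strand systems contribute the same between-tags sign because that sign is a function of endpoint data only. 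Either route reduces the claim to the already-established fact that each tag subset is a copy of $[r]$, completing the proof of the Claim and hence of Lemma~\ref{lem:eora}.
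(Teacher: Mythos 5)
Your central step---that planarity forces a \emph{unique} non-crossing routing of the color-$i$ strands, so that the bijection $\sigma_i$ from the color-$i$ boundary sources to the tags is determined by $I_i$ alone---is not just the ``hard part'' to be made rigorous later; it is false, and the argument collapses without it. Two consistent labelings of the same $\hat{W}$ with the \emph{same} boundary data $\mcs$ can route a fixed color through the web in genuinely different ways: for the first weblike subgraph in \eqref{eg:Web3N} of Example~\ref{eg:computeweights}, the boundary $\mcs$ that puts $1$'s on the two boundary edges at the degree-three square vertices (forcing $e,f$ to carry $1$ as well) admits exactly two consistent labelings, one sending color $2$ through the edges $a,c$ and the other through $b,d$. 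Passing between two consistent labelings with the same boundary amounts to resolving crossings at shuffle vertices, and resolving a crossing of paths $p\to t_i$ and $q\to t_j$ produces paths ending at $t_j$ and $t_i$ respectively---so the tag reached by a given color-$i$ source is not an invariant of $\ell$. Nor does abstract planarity help: a non-crossing system of arcs joining two boundary points to two \emph{interior} points of a disk can realize either bijection (route one arc around the other's endpoint), so ``non-crossing with prescribed endpoints'' does not pin down $\sigma_i$. Only the \emph{total} parity, summed over all colors, is an invariant. Your closing step is also unjustified: different $\mcs$ produce different multisets $\{I_1,\dots,I_r\}$, so even granting a per-color formula $f(I_i)$ the sum $\sum_i f(I_i)$ would a priori vary with $\mcs$; ``permuting which color plays which role'' proves nothing here.

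The missing idea is a quantity that is insensitive to these reroutings. The paper cuts the disk open, defines for each path $p$ and each tag $t_j$ a nesting number (the number of intersections of $p$ with the vertical ray above $t_j$), and shows that the \emph{total} nesting number is preserved mod $2$ when any single crossing is resolved; the key point is that the absence of oriented cycles in $\hat{W}$ prevents a tag from lying inside a bounded region cut out by the two path segments and the axis, which makes the relevant intersection counts even. Consequently every consistent labeling has the same total nesting number as the canonical maximally-crossing flow, which is defined purely topologically, without reference to colors, and hence is independent of $\mcs$. Finally, precisely \emph{because} same-colored paths never cross in a consistent labeling (the one part of your setup that is correct and is also used in the paper), this topological invariant coincides mod $2$ with the count of same-color inversions between tags. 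Any repair of your argument needs some such crossing-resolution invariant; per-color rigidity of the routing is not available.
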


Up to this point, the sign analysis has been mostly ``soft'' reasoning. Proving this second subclaim requires using the planarity of $\hat{W}$ in an essential way. 

Let us cut the boundary of the disk between boundary vertices $n$ and $1$, flattening the web $\hat{W}$ so that the boundary vertices are drawn on the $x$-axis in $\bbr^2$ in the order $1,\dots,n$, and the web $\hat{W}$ is drawn in the upper half plane.  The flow $\ell$ gives $rk$ directed paths from the $x$-axis to the tags, colored by the elements of $[r]$.  Let us make the assumption that the $x$-coordinates of the boundary vertices, the tags $t_i$, and the intersection points between the $rk$ paths are pairwise distinct.  Let us assume furthermore that the tags are ordered $t_1,\dots,t_k$ by the $x$-coordinates, from left to right. 

Fixing the boundary data $\mcs$, the flow $\ell_{\text{canon}}(\mcs)$ is the one in which these paths cross each other maximally, and any consistent labeling $\ell$ with boundary $\mcs$ is obtained by resolving certain of the crossings in $\ell_{\text{canon}}(\mcs)$. Furthermore, a key observation that we use later is that if the labeling is consistent, then two paths of the same color never intersect. 
%
%

For each pair $(p,t_j)$ of a path $p$ ending at $t_i$, and a tag $t_j$ (where $i \neq j$), we define the \emph{nesting number} $n(p,t_j)$ of the pair $(p,t_j)$ to be the total number of intersection points of $p$ with the vertical ray $R_j$ going upwards from $t_j$ to $\infty$. We define $n(p,t_i) = 0$ if $p$ ends at $t_i$.  
For any labeling $\ell$, we can define the \emph{total nesting number} of $\ell$ as the sum of the nesting numbers for all pairs $(p,t_j)$.

We now argue that when we resolve a crossing in a flow, the total nesting number does not change.  Let $p, q$ be two paths ending at $t_i$ and $t_j$ respectively.  Since we have assumed that $\hat W$ has no oriented cycles, there are no oriented cycles in the flow.  It follows that 
\begin{align} \label{eq:inter1}
&\mbox{$p$ (resp. $q$) has no self intersections, and} \\
 \label{eq:inter2}
&\mbox{the intersection points of $p \cap q$ appear in the same order on $p$ and $q$.}
\end{align}  When we resolve a crossing of $p$ and $q$,  the quantity $n(p,t_k) + n(q,t_k)$ for $k \neq i,j$ clearly remains unchanged. Let us decompose the path $p$ (resp. $q$) into two segments $p_bp_a$ (resp. $q_bq_a$) so that the crossing happens when $p_a$ ends and $p_b$ begins (likewise for $q_a$ and $q_b$). We will be done if we show that $n(p,t_j)+n(q,t_i) \equiv n(q_bp_a,t_i)+n(p_bq_a,t_j) \mod 2$. We can decompose $n(p,t_j)$ as $\# p_a \cap R_j + \# p_b \cap R_j$ and so on for the other paths. Certain of these contributions cancel modulo 2, and our claim reduces to checking that 
\begin{equation}
\# (p_a\cup q_a) \cap (R_i \cup R_j) \equiv 0 \mod 2. 
\end{equation}
Indeed, each of $\# (p_a\cup q_a) \cap (R_i)$ and  $\# (p_a\cup q_a) \cap (R_i)$ is even, which follows from the fact that $t_j$ (resp. $t_i$) cannot lie inside a bounded region enclosed by $p_a, q_a$ and the $x$-axis by \eqref{eq:inter1} and \eqref{eq:inter2}.


Thus the total nesting number of any consistent labeling $\ell$ with boundary $\mcs$ is the same as the total nesting number of $\ell_{\text{canon}}(\mcs)$.
The total nesting number of $\ell_{\text{canon}}(\mcs)$ is independent of $\mcs$ because the flow of the paths for $\ell_{\text{canon}}$ is defined topologically (without regard to color).

Moreover, because $\ell$ is a consistent labeling, and therefore two paths of the same color never cross, the total nesting number coincides with the number of inversions between elements of the same color modulo $2$. This completes the proof of the claim and the lemma.
\end{proof}

\begin{figure}
\begin{tikzpicture}
\node (A) at (2.5,2) {$ \ast$};
\node (B) at (5.5,3) {$\ast$};
\node (C) at (8.5,3) {$\ast$};
\draw [dashed] (A)--(2.5,4);
\draw [dashed] (B)--(5.5,4);
\draw [dashed] (C)--(8.5,4);

\node at (0,-.5) {$1_1$};
\node at (1,-.5) {$1_2$};
\node at (2,-.5) {$2_1$};
\node at (3,-.5) {$4_1$};
\node at (4,-.5) {$3_1$};
\node at (5,-.5) {$3_2$};
\node at (6,-.5) {$3_4$};
\node at (7,-.5) {$2_2$};
\node at (8,-.5) {$1_3$};
\node at (9,-.5) {$4_2$};
\node at (10,-.5) {$2_3$};
\node at (11,-.5) {$4_3$};

\draw [black, thick, rounded corners] (0,0)--(1,1.2)--(2,1.5)--(A);
\draw [black, thick, rounded corners] (1,0)--(1.7,1)--(2.2,1.5)--(A);
\draw [green, thick, rounded corners] (3,0)--(2.8,1)--(2.7,1.5)--(A);
\draw [blue, thick, rounded corners] (4,0)--(3.5,1)--(3,1.5)--(A);

\draw [red, thick, rounded corners] (2,0)--(2,.5)--(1,1.5)--(1.5,2.7)--(2,3)--(3,3.5)--(4,3.3)--(B);
\draw [black, thick, rounded corners] (8,0)--(7,1.5)--(6,2.5)--(B);
\draw [blue, thick, rounded corners] (5,0)--(5.15,1)--(5.3,2)--(B);
\draw [green, thick, rounded corners] (9,0)--(7.5,1.5)--(6.5,2.5)--(B);

\draw [red, thick, rounded corners] (7,0)--(7.7,1.5)--(8.2,2.5)--(C);
\draw [red, thick, rounded corners] (10,0)--(9.2,1.5)--(8.8,2.5)--(C);
\draw [green, thick, rounded corners] (11,0)--(10.15,1)--(9.5,2.8)--(C);
\draw [blue, thick, rounded corners] (6,0)--(7,1.5)--(8,2.5)--(C);

\end{tikzpicture}
\caption{An example of a flow that could come from an $\SL_4$-web on 12 vertices. The $*$'s are tags and the dashed lines are the rays. The total number of nestings for this flow is $1$, witnessed by the path through $2_1$ going above the first tag. The flow is not consistent because two black strands end at tag~1. It can be made consistent (for example) by resolving the crossing between strands $1_1$ and $2_1$ and also between $2_2$ and $1_3$, which does not change the total nesting number. Once these two resolutions are made, the total number of inversions between colors is 1 (witnessed by the inversion between $1_2$ and $1_1$), which matches the initial total number of nestings.   
\label{fig:concepts}}
\end{figure}
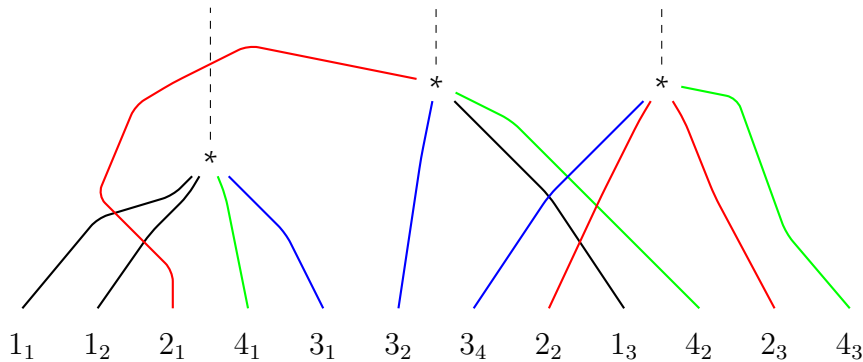

The following result relates tensor evaluation of webs to boundary measurements of networks; it serves as the fundamental link between webs and dimers. 

\begin{prop}\label{lem:PluckerstoWebr} 
Let $N$ be a network with excedance $k$. Let $\mci = (I_1,\dots,I_r)$ be a list of subsets satisfying \eqref{eq:IMultiset}, dual to $\mcs = (S_1,\dots,S_n)$ satisfying \eqref{eq:SMultiset}. Then 
\begin{equation}\label{eq:PluckerstoWebr}
\Delta_{I_1}(N) \cdots \Delta_{I_r}(N) = \sgn(\mcs) \Web_r(N;\lam) \big|_{E_\mcs}.
\end{equation} 
\end{prop}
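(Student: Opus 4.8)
The plan is to expand the left-hand side of \eqref{eq:PluckerstoWebr} as a weighted count of $r$-tuples of dimer configurations, reorganize that count according to the $r$-weblike subgraph produced by superposition, and then invoke the characterization \eqref{eq:eora} of $\textbf{W}$ coming from Lemma~\ref{lem:eora}. Throughout, $\lam = (\lam_1,\dots,\lam_n)$ is the composition determined by $\mcs$ (equivalently by $\mci$): $\lam_i = |S_i|$ is the multiplicity of $i$ in the multiset $I_1\cup\cdots\cup I_r$.

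First I would write, using \eqref{eq:boundarymeasurmentdefn},
$$\Delta_{I_1}(N)\cdots\Delta_{I_r}(N) = \sum_{(\pi_1,\dots,\pi_r)} \wt(\pi_1)\cdots\wt(\pi_r),$$
where the sum is over ordered $r$-tuples of dimer configurations of $N$ with $\partial(\pi_j) = I_j$ for each $j$.

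Next comes the core combinatorial step, the superposition bijection. To an $r$-tuple $(\pi_1,\dots,\pi_r)$ I associate the subgraph $W$ consisting of all edges lying in at least one $\pi_j$, with multiplicity $m(e) = \#\{j : e\in\pi_j\}$, together with the labeling $\ell$ defined by $S(e) = \{j : e\in\pi_j\}$. I would then check: (i) since each $\pi_j$ uses every interior vertex exactly once, the sets $S(e)$ around an interior vertex are disjoint with union $[r]$, so $W$ is an $r$-weblike subgraph and $\ell$ is a consistent labeling of it; (ii) $S(b_i) = \{j : b_i\in\pi_j\} = \{j : i\in I_j\} = S_i$ by the duality between $\mci$ and $\mcs$, so $\ell$ has boundary label subsets $\mcs$ and in particular $\lam(W) = \lam$; (iii) $\wt(W) = \prod_e \wt(e)^{m(e)} = \prod_j \wt(\pi_j)$. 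Conversely, given an $r$-weblike subgraph $W$ with $\lam(W) = \lam$ and a consistent labeling $\ell$ of $W$ with boundary label subsets $\mcs$, setting $\pi_j := \{e : j\in S(e)\}$ recovers an $r$-tuple with $\partial(\pi_j) = \{i : j\in S_i\} = I_j$; consistency of $\ell$ guarantees each $\pi_j$ covers every interior vertex exactly once and each boundary vertex at most once, so $\pi_j$ is a genuine dimer configuration. These two assignments are mutually inverse, giving a weight-preserving bijection from $r$-tuples $(\pi_1,\dots,\pi_r)$ with $\partial(\pi_j) = I_j$ onto pairs $(W,\ell)$ with $\lam(W) = \lam$ and $\ell$ a consistent labeling of $W$ with boundary label subsets $\mcs$.

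Finally I would assemble the pieces. Grouping $r$-tuples by their image $W$ and recalling that $a(\mcs;W)$ counts the consistent labelings of $W$ with boundary $\mcs$, the bijection gives
$$\Delta_{I_1}(N)\cdots\Delta_{I_r}(N) = \sum_{W\subset G,\ \lam(W) = \lam} \wt(W)\, a(\mcs;W).$$
By \eqref{eq:eora}, which is the content of Lemma~\ref{lem:eora}, one has $\textbf{W}\big|_{E_\mcs} = \sgn(\mcs)\, a(\mcs;W)$ for every $W$, so by \eqref{eq:Webrdefn}
$$\Web_r(N;\lam)\big|_{E_\mcs} = \sum_{W\subset G,\ \lam(W) = \lam} \wt(W)\,\textbf{W}\big|_{E_\mcs} = \sgn(\mcs)\sum_{W} \wt(W)\, a(\mcs;W) = \sgn(\mcs)\,\Delta_{I_1}(N)\cdots\Delta_{I_r}(N).$$
Multiplying through by $\sgn(\mcs)$ and using $\sgn(\mcs)^2 = 1$ yields \eqref{eq:PluckerstoWebr}.

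I do not expect a serious obstacle here: the one genuinely delicate ingredient — the sign-coherence statement \eqref{eq:eora} — has already been handled in Lemma~\ref{lem:eora}, so the work is purely in getting the superposition bijection right, i.e.\ verifying that a consistent labeling with boundary $\mcs$ is precisely the data of an ordered decomposition of $W$ into $r$ dimers with the prescribed boundary subsets, and that the degree $\lam$ appearing in \eqref{eq:Webrdefn} is forced to coincide with the one determined by $\mcs$.
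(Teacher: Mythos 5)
Your proposal is correct and follows essentially the same route as the paper: expand the product of boundary measurements over $r$-tuples of dimer configurations, group them by the $r$-weblike subgraph they superimpose to (with the consistent labeling recording which $\pi_j$ each edge belongs to), and then apply the sign-coherence statement \eqref{eq:eora} from Lemma~\ref{lem:eora}. The only difference is that you spell out the superposition bijection in detail where the paper compresses it into one sentence.
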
 


We still find the statement of Proposition~\ref{lem:PluckerstoWebr} surprising, yet the importance of this statement for us is belied by the simplicity of the proof.

\begin{proof}
We have that
\begin{align*}
\Delta_{I_1}(N) \cdots \Delta_{I_r}(N) &= \sum_{\text{$r$-weblike subgraphs $W$, degree } =\lam} a(\mcs; W)\wt(W) \\
&= \sgn(\mcs) \sum_{W} \textbf{W}\wt(W) \\
&= \sgn(\mcs) \Web_r(N;\lam) \big|_{E_\mcs}.
\end{align*}
The first equality follows by considering superpositions of dimer configurations $(\pi_1,\dots,\pi_r)$ with boundary subsets $(I_1,\dots,I_r)$ and grouping them according to the $r$-weblike subgraph they determine. The second equality follows from Lemma~\ref{lem:eora}, and the third equality from the definition \eqref{eq:Webrdefn}. 
\end{proof}

\begin{proof}[Proof of Theorem~\ref{thm:welldefined}] 
From \eqref{eq:PluckerstoWebr}, the evaluation of $\Web_r(N;\lam)$ on a basis tensor $E_\mcs$ can be expressed in terms of boundary measurements, which only depend on $\tX(N)$. Evaluation at $E_\mcs$ for varying $\mcs$ determines an element of $\mcw_\lam(U)$. This proves Theorem~\ref{thm:welldefined}.
\end{proof}

\begin{proof}[Proof of Proposition~\ref{prop:annoying} and Theorem \ref{thm:immanants}] 
First note that for each list of boundary label subsets $\mcs$, there is a functional $\text{eval}(E_\mcs)  \in \mcw_\lam(U)^*$ given by evaluation at $E_\mcs$. Furthermore, by \eqref{eq:PluckerstoWebr}, the function $\tX(N) \to \text{eval}(E_\mcs) (\Web_r(N; \lam))$ agrees with the function $\tX(N) \mapsto \sgn(\mcs) \Delta_{I_1}(N) \cdots \Delta_{I_r}(N)$. This latter function is a regular function on $\tGr(k,n)$. Since the set of points of the form $\tX(N)$ is Zariski dense in $\tGr(k,n)$, the function $\tX(N) \to \text{eval}(E_\mcs) (\Web_r(N; \lam))$ extends uniquely to a regular function on all of $\tGr(k,n)$. The resulting extension is the image of the immanant map. Now we note the functionals $\text{eval}(E_\mcs) $ span $\mcw_\lam(U)^*$, so we conclude that every element of $\varphi \in \mcw_\lam(U)^*$ gets sent to a a regular function on $\tGr(k,n)$ by the immanant map.  This proves Proposition~\ref{prop:annoying}.

Since $r$-fold products $\Delta_{I_1}\cdots \Delta_{I_r}$ of Pl\"ucker coordinates span the graded piece of $\bbc[\Gr(k,n)]_\lam$, the immanant map is surjective. A dimension count verifies that the map is an isomorphism. We will see another argument for the equality of dimensions in Section~\ref{secn:duality}.  This proves Theorem \ref{thm:immanants}.
\end{proof}

Finally, let us extract some consequences.  First, the pairing $\mcw_\lam(U) \otimes \bbc[\Gr(k,n)]_\lam \to \bbc$ can be given in simple terms as follows:
\begin{align}
\la \textbf{W}, \Delta_{I_1} \cdots \Delta_{I_r}\ra &= \sgn(\mcs) \textbf{W} \big|_{E_\mcs} = a(\mcs;W) \label{eq:pairingdumbeddownII} \\
\la \Web_r(N;\lam),f \ra &= f(\tX(N)) \label{eq:pairingdumbeddown}.
\end{align}
In \eqref{eq:pairingdumbeddownII} we let $(I_1,\dots,I_r)$ be a list of subsets, dual to $\mcs$, and let $W$ be an $r$-weblike subgraph with tensor invariant $\textbf{W} \in \mcw_\lam(U)$.  In \eqref{eq:pairingdumbeddown} we let $f \in \bbc[\Gr(k,n)]_\lam$ and $N$ be any network of excedance $k$.  Note in particular that \eqref{eq:pairingdumbeddownII} is always nonnegative. The equation \eqref{eq:pairingdumbeddown} follows from checking the special case where $f=\Delta_{I_1} \cdots \Delta_{I_r}$, which was the content of Proposition~\ref{lem:PluckerstoWebr}.

Second, we can give another interpretation of the duality between $\bbc[\Gr(k,n)]_\lam$ and $\mcw_\lam(U)$. Let $G$ be a bipartite graph representing the top cell. Let us pick a collection of edges of $G$ so that varying the weights $w_e$ of those edges (keeping the rest of the weights as 1) gives a a rational parametrization $(\bbc^\times)^{k(n-k)} \to \Gr(k,n)$.  We now think of the weights $w_e$ as variables, and form the sum
\begin{equation}\label{eq:Webrdefn2}
\Web_r(G;\lam) := \sum_{W \subset G \, \lam(W) = \lam} \wt(W) \, \textbf{W}. 
\end{equation}
Here, $\wt(W)$ are monomials in the edge weight variables $w_e$. Specializing the edge-weights to complex numbers gives an element of $\mcw_\lam(U)$. Thus we get elements of $\mcw_\lam(U)$ depending algebraically on points of $\tGr(k,n)$. Our above analysis shows that we can then view $\Web_r(G;\lam)$ as giving rise to an element
$$\Web_r(G;\lam) \in \bbc[\Gr(k,n)]_\lam \otimes \mcw_\lam(U)$$
which \emph{is} the pairing between these two spaces. 
We have that
$$\Web_r(G;\lam) \big|_{E_\mcs} = \sgn(\mcs) \Delta_{I_1} \cdots \Delta_{I_r}$$
for any boundary label subsets $\mcs$, and in fact, this characterizes $\Web_r(G;\lam) \in  \bbc[\Gr(k,n)]_\lam \otimes \mcw_\lam(U).$ In particular, we emphasize that $\Web_r(G;\lam) $ is independent of $G$ (as long as $G$ represents the top cell).

Since $\Web_r(G;\lam)$ realizes the duality between $\bbc[\Gr(k,n)]_\lam$ and $\mcw_\lam(U)$, it must be a full-rank tensor in $\bbc[\Gr(k,n)]_\lam \otimes \mcw_\lam(U)$. From this, it can be seen that as $N$ varies, the various $\Web_r(N;\lam)$ span $\mcw_\lam(U)$. This will also follow from Theorem~\ref{thm:positroidspanning}.

\medskip 
Let us also formulate the following positivity conjecture.

\begin{conj}\label{posconj}
Under the isomorphism $\mcw_\lam(U) \cong \bbc[\Gr(r,n)]_\lam$, the functions {\bf W} take nonnegative values on the cone $\tGr(r,n)_{\geq 0}$ over the totally nonnegative Grassmannian.
\end{conj}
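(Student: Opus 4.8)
My plan is to deduce the conjecture from the following stronger algebraic statement: for every $r$-weblike subgraph $W$, the tensor invariant $\textbf{W}$ --- regarded via the isomorphism of the conjecture as an element of $\bbc[\Gr(r,n)]_\lam$, and most transparently in the multilinear case $\lam=(1,\dots,1)$, where $\mcw(r,n)$ sits inside $\bbc[\Gr(r,n)]$ on the nose --- is a nonnegative $\bbc$-linear combination of products of Pl\"ucker coordinates of $\Gr(r,n)$. This implies the conjecture immediately: each Pl\"ucker coordinate is by definition nonnegative on the cone $\tGr(r,n)_{\geq 0}$, hence so is any product of them, hence so is any nonnegative combination. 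The real content is thus the nonnegativity of the coefficients of $\textbf{W}$ in the standard monomial basis of $\bbc[\Gr(r,n)]_\lam$.

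I would reformulate this using the pairing. By \eqref{eq:pairingdumbeddownII}, $\textbf{W}$ pairs with the standard monomial $\Delta_{I_1}\cdots\Delta_{I_r}$ (dual to the boundary labels $\mcs$) to give $a(\mcs;W)\geq 0$, so $\textbf{W}$ lies in the cone dual --- with respect to the $S_n$-invariant pairing --- to the cone of nonnegative combinations of such monomials. The tensor invariants $\textbf{V}$ attached to the various $r$-weblike subgraphs $V$ span $\bbc[\Gr(r,n)]_\lam$, and combining \eqref{eq:pairingdumbeddown} with Proposition~\ref{lem:PluckerstoWebr} shows that whenever $\tilde N$ is a network with positive real edge weights representing a point of $\tGr(r,n)_{\geq 0}$ one has $\textbf{W}(\tX(\tilde N)) = \langle \Web_r(\tilde N;\lam),\textbf{W}\rangle = \sum_{V\subset \tilde G}\wt(V)\,\langle\textbf{V},\textbf{W}\rangle$ with all $\wt(V)>0$. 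Since such points are dense in $\tGr(r,n)_{\geq 0}$ by Theorem~\ref{thm:postnikov}, the conjecture follows from the single family of inequalities $\langle\textbf{V},\textbf{W}\rangle\geq 0$ for all $r$-weblike subgraphs $V,W$ --- that is, from the assertion that the Gram matrix of the web spanning set under the invariant pairing has nonnegative entries.

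To establish $\langle\textbf{V},\textbf{W}\rangle\geq 0$, the natural route is to find a combinatorial model of this pairing as the cardinality of an explicit set of ``superimposed configurations'' of $W$ and $V$ --- concretely, pairs consisting of a consistent labeling of $W$ and a consistent labeling of $V$ that match along the shared boundary circle --- so that the count is manifestly nonnegative. The expected mechanism is a sign-coherence phenomenon of the kind established in Lemma~\ref{lem:eora}: expanding one of the two invariants in a spanning set and pairing term by term a priori introduces signs, but planarity of $\hat W$ and $\hat V$ should force all surviving terms to carry a common sign, exactly as the noncrossing and nesting analysis in the proof of Lemma~\ref{lem:eora} did for a single boundary input $E_\mcs$. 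In ranks $2$ and $3$ this is borne out: $\langle\textbf{V},\textbf{W}\rangle$ is then computed by the Temperley--Lieb and $\SL_3$ web immanants of \cite{LamDimers} and equals a manifestly nonnegative, power-of-two weighted count of loops in the superposition of $W$ and $V$, which also supplies a template for the general argument.

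The crux --- and the main obstacle --- is precisely this sign-coherence for the pairing, which is genuinely more delicate than Lemma~\ref{lem:eora}. There the boundary datum $\mcs$ is held fixed; here the standard monomial expansion blends contributions from many different boundary inputs at once, so one must control the signs globally rather than input by input, and for $r\geq 4$ there is no web basis to organize the bookkeeping around. I expect the resolution again to hinge on planarity --- perhaps through a ``straightening with controlled signs'' for weblike subgraphs, or a Lindstr\"om--Gessel--Viennot-type sign-reversing involution on superimposed flows --- and it may well require the total positivity input that the authors anticipate underlies the sign-coherence lemma. Finally, the compatibility of $\Web_r$ with positroid strata developed in Section~\ref{secn:positroids}, together with the spanning statement Theorem~\ref{thm:positroidspanning}, should be exploited: it pins down the behavior of the functions $\textbf{W}$ on the boundary cells of $\tGr(r,n)_{\geq 0}$ and opens the way to an induction on the positroid stratification.
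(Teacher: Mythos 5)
First, a point of orientation: the statement you are proving is Conjecture~\ref{posconj}, which the paper states as a \emph{conjecture} and does not prove. The paper offers only evidence --- Lusztig's positivity for the canonical basis, the fact that for $r=2,3$ each $\textbf{W}$ expands nonnegatively in the non-elliptic web basis, a worked example in $\Gr(3,9)$, and the suggestion that the sign coherence of Lemma~\ref{lem:eora} is itself a shadow of positivity --- so there is no proof to measure your proposal against, and your proposal, by your own account, is not one either. That said, your reduction is essentially sound once the indices are straightened out: a point of $\tGr(r,n)_{\geq 0}$ is represented by a network $\tilde N$ of excedance $r$, so the relevant identity is $\textbf{W}(\tX(\tilde N)) = \la \Web_k(\tilde N;\lam), \textbf{W}\ra = \sum_V \wt(V)\,\la \textbf{V},\textbf{W}\ra$, where $V$ runs over the $k$-weblike subgraphs of $\tilde G$ and the pairing is the $\SL_k$--$\SL_r$ web duality of the Appendix (not $\Web_r(\tilde N;\lam)$ paired with itself, as written). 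With Theorem~\ref{thm:postnikov}, density of the top cell, and continuity, this correctly reduces the conjecture to the Gram inequality $\la\textbf{V},\textbf{W}\ra\geq 0$ for all pairs of weblike subgraphs.

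The genuine gaps are two. (a) The Gram inequality is left entirely unproven, and it is strictly \emph{stronger} than the conjecture: positivity of the polynomial $\sum_V\wt(V)\,\la\textbf{V},\textbf{W}\ra$ on the positive orthant of edge weights does not force the individual coefficients $\la\textbf{V},\textbf{W}\ra$ to be nonnegative, so you have traded one open positivity statement for a harder one, supported only by the hope that a Lemma~\ref{lem:eora}-style sign coherence extends from a single boundary input $E_\mcs$ to the full pairing --- precisely the step you concede you cannot carry out for $r\geq 4$. (b) Your opening ``stronger algebraic statement'' --- that $\textbf{W}$ is a nonnegative combination of products of Pl\"ucker coordinates of $\Gr(r,n)$ --- is not implied by \eqref{eq:pairingdumbeddownII}: that equation controls the pairing of $\textbf{W}$ against standard monomials of the \emph{other} Grassmannian $\bbc[\Gr(k,n)]_\lam$, and membership in the dual cone of a monomial cone is much weaker than membership in the monomial cone itself, so paragraphs one and two of your proposal do not connect. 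In sum, your write-up is a reasonable reformulation of the problem, consistent with the heuristics the paper itself records after the conjecture, but it is not a proof, and the paper contains none.
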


In Section~\ref{secn:positroids}, we show that webs in $\mcw(r,n)$ are naturally dual to certain elements of $\bbc[\Gr(k,n)]$, and that this duality behaves well under restriction to positroid subvarieties $\Pi \subset \Gr(k,n)$. Our positivity conjecture~\ref{posconj} says that $\SL_r$-webs are positive when thought of as functions on the Grassmannian $\Gr(r,n)$. To briefly explain the reasoning, we first note that the positivity property of Conjecture \ref{posconj} is known to hold for the canonical basis by the work of Lusztig \cite{Lusztig}.  In the $r = 2$ or $r = 3$ cases, \textbf{W} expands positively as a sum of non-elliptic web basis elements.  These web basis elements are known to coincide with the canonical basis for $r = 2$, and share many similar properties with the canonical basis in the case $r = 3$. Finally, we view the sign coherence in Lemma~\ref{lem:eora} as a manifestation of positivity. 
 
\begin{example}
As a simple example of Conjecture \ref{posconj}, consider the \emph{single cycle web} $W \in \mcw(3,9)$ (the second web in the first row of Figure~\ref{fig:dualitypics}), with its boundary vertices numbered so that vertices $1,4,7$ connect to the interior hexagon of $W$. Multiplying by the Pl\"ucker coordinate $\Delta_{479}$ and using the skein relations, one sees that $\Delta_{479} \cdot W$ is a sum of two cluster monomials. It follows that $W$ can be expressed as a Laurent monomial with positive coefficients in Pl\"ucker coordinates for $\bbc[\Gr(3,9)]$, and therefore takes positive values on $\tGr(3,9)_{\geq 0}$. See \cite[Proposition 16]{TensorsII}), 
for a more general version of this argument (that does not explicitly mention positivity) in the case $r=3$.  
\end{example}

\section{Deducing  skein relations from moves on networks}\label{secn:skein}
According to Theorem~\ref{thm:welldefined}, the element $\Web_r(N)$ only depends on $\tX(N)$. Thus it is unchanged when we perform the local moves from Theorem~\ref{thm:connectedness} to $N$. However, the \emph{expression} for $\Web_r(N)$ as a weighted sum of $r$-weblike subgraphs will change after each local move, since the weblike subgraphs change. In this way, we obtain relations between certain linear combinations of $\SL_r$-webs. 


As it turns out, the diagrammatic relations on webs we obtain in this way are exactly the Cautis-Kamnitzer-Morrison relations \cite{SkewHowe}. This will be the content of Theorem~\ref{thm:kernel}] and Corollary~\ref{cor:rwebker}. We can interpret this fact in two ways: On the one hand, it is possible (though perhaps tedious) to use the Cautis-Kamnitzer-Morrison relations to give an alternative proof Theorem~\ref{thm:welldefined} -- one checks that if $N$ and $N'$ are related by a local move on networks, then $\Web_r(N)$ and $\Web_r(N')$ differ by a skein relation. We believe that our approach to Theorem~\ref{thm:welldefined}, based on Proposition \ref{lem:PluckerstoWebr} and Lemma~\ref{lem:eora}, is somewhat more illuminating. On the other hand, in this paper we will use Theorem~\ref{thm:welldefined} to derive diagrammatic relations amongst webs, and use our duality setup to prove that the relations we obtain in this way are a complete set of diagrammatic relations.

Our $r$-weblike graphs are closely related to, but slightly different from the webs considered in \cite{SkewHowe}. These differences require some care when comparing moves on bipartite graphs and diagrammatic relations on webs. Thus we will begin with diagrammatic relations on $r$-weblike subgraphs before proceeding to the case of webs.

\subsection{$r$-weblike graph relations} We begin by illustrating how the various skein relations on $r$-weblike graphs can be derived from relations on networks. 

Consider the space $\mcf \mcw_{\lam}(r)$ of finite formal $\bbc$-linear combinations of $r$-weblike graphs (i.e, labeled bipartite graphs such that the sum of labels around every interior vertex equals~$r$). We will later compare this with the space $\mcf \mcs_{\lam}(r)$ of formal $\bbc$-linear combinations of $\SL_r$-web diagrams.

There is a surjection $\mcf \mcw_{\lam}(r)\surjects \mcw_\lam(U)$ from the evaluation equation \eqref{eq:eora}, that is, by sending a $r$-weblike graph $W$ to the tensor invariant given by its boldface version $\textbf{W}$.

We begin by listing all the diagrammatic moves that can be performed to an $r$-weblike graph $W$ without changing the value of the tensor invariant~$\textbf{W}$. Of these moves, the last move is the only truly interesting one.
\begin{itemize}
\item Two-valent vertex removal \eqref{eq:nonsquaremove}. There is also an analogous move to \eqref{eq:nonsquaremove} which has the colors reversed. For simplicity, we drew all neighboring edges in \eqref{eq:nonsquaremove} with multiplicity $1$, though in general they may have any multiplicity.
\item Bigon removal \eqref{eq:nonsquaremove}. Parallel edges, with multiplicities $a$ and $b$, can be replaced by one edge with multiplicity $a+b$, at the cost of a multplicative factor $\binom {a+b} b$. As before, colors can be reversed. 
\item Leaf and dipole removal. Edges with multiplicity $r$ or $0$ can be removed from $r$-weblike graphs. 
\item The square move for $r$-weblike graphs \eqref{eq:squaremove}. Notice that the diagrams in \eqref{eq:squaremove} connect to the ``outside'' with multiplicity $j$ from the southwest, $\ell$ from the southeast, $r+v-s-\ell$ from the northeast,  and $r+s-v-j$ from the northwest. If $j,\ell,v$ are fixed, there is a square move for each $s$ satisfying $\max(0,v-\ell,v+j-r) \leq s \leq \min(j,r-\ell)$. 
\end{itemize} 

\begin{equation}\label{eq:nonsquaremove}
\begin{tikzpicture}[scale =1]
\draw (-1,0)--(1,0);
\draw (-1.5,1)--(-1,0)--(-1.7,0);
\draw (-1.5,-1)--(-1,0);
\draw (1.5,1)--(1,0)--(1.5,.25);
\draw (1.5,-.25)--(1,0)--(1.5,-1);
\node at (-.5,.3) {\small $a$};
\node at (.5,.3) {\small $r-a$};

\filldraw[black] (-1,0) circle (0.1cm);
\draw (-1,0) circle (0.1cm);
\filldraw[black] (1,0) circle (0.1cm);
\draw (1,0) circle (0.1cm);
\filldraw[white] (0,0) circle (0.1cm);
\draw (0,0) circle (0.1cm);

\node at (2.25,0) {\Large $=$};
\draw (3,1)--(3.5,0)--(2.8,0);
\draw (3,-1)--(3.5,0);
\draw (4,1)--(3.5,0)--(4,.25);
\draw (4,-.25)--(3.5,0)--(4,-1);
\filldraw[black] (3.5,0) circle (0.1cm);
\draw (3.5,0) circle (0.1cm);

\begin{scope}[xshift = 7.5cm]
\draw (-1.7,0)--(-1,0);
\draw (-1.7,.5)--(-1,0);
\draw (-1.7,-.5)--(-1,0);
\draw (0,0)--(.7,0);
\draw (.7,.5)--(0,0);
\draw (.7,-.5)--(0,0);
\node at (-.5,.7) {\small $a$};
\node at (-.5,-.7) {\small $b$};
\filldraw[black] (-1,0) circle (0.1cm);
\draw (-1,0) circle (0.1cm);
\draw [rounded corners] (-1,0)--(-.75,.2)--(-.5,.3)--(-.25,.2)--(0,0); 
\draw [rounded corners] (-1,0)--(-.75,-.2)--(-.5,-.3)--(-.25,-.2)--(0,0); 
\filldraw[white] (0,0) circle (0.1cm);
\draw (0,0) circle (0.1cm);
\node at (2.24,0) {\large $\displaystyle = \binom {a+b} b$};
\draw (3.55,0)--(4.25,0);
\draw (5.25,0)--(5.95,0);
\draw (5.25,0)--(4.25,0);
\draw (3.55,.5)--(4.25,0);
\draw (3.55,-.5)--(4.25,0);
\draw (5.95,.5)--(5.25,0);
\draw (5.95,-.5)--(5.25,0);

\node at (4.75,.5) {\small $a+b$};
\filldraw[black] (4.25,0) circle (0.1cm);
\draw (4.25,0) circle (0.1cm);
\filldraw[white] (5.25,0) circle (0.1cm);
\draw (5.25,0) circle (0.1cm);
\end{scope}
\end{tikzpicture}
\end{equation}

\begin{equation}\label{eq:squaremove}
\begin{tikzpicture}[scale=0.7]
\draw (0,0) -- (2,0)--(2,2)--(0,2)--  (0,0);
\draw (3,3) -- (2,2);
\draw (-1,-1) -- (0,0);
\node at (-1,1) {\small $j-s$};
\node at (3.1,1) {\small $r-\ell-s$};
\node at (1,2.4) {\small $v$};
\node at (1,-0.4) {\small $s$};
\node at (-1.2,-.3) {\small $r-j$};
\node at (2.5,3.45) {\small $\ell-v+s$};

\filldraw[black] (0,0) circle (0.1cm);
\filldraw[black] (2,2) circle (0.1cm);
\filldraw[white] (2,0) circle (0.1cm);
\draw (2,0) circle (0.1cm);
\filldraw[white] (0,2) circle (0.1cm);
\draw (0,2) circle (0.1cm);
\filldraw[white] (3,3) circle (0.1cm);
\draw (3,3) circle (0.1cm);
\filldraw[white] (-1,-1) circle (0.1cm);
\draw (-1,-1) circle (0.1cm);

\node at (7.3,1) {\large $\displaystyle  =\sum_t \binom {j-\ell+v-s}{t}$};

\begin{scope}[xshift =13.5cm]
\draw (0,0) -- (0,2)-- (2,2)-- (2,0)-- (0,0);
\draw (3,-1) -- (2,0);
\draw (-1,3) -- (0,2);
\node at (-1.6,1) {\small $r-j-v+t$};
\node at (-2.2,2.4) {\small $v+j-s$};
\node at (3.1,1) {\small $\ell-v+t$};
\node at (3.3,0) {\small $r-\ell$};
\node at (1,2.4) {\small $s-t$};
\node at (1,-0.4) {\small $v-t$};

\filldraw[black] (2,0) circle (0.1cm);
\filldraw[black] (0,2) circle (0.1cm);
\filldraw[white] (0,0) circle (0.1cm);
\draw (0,0) circle (0.1cm);
\filldraw[white] (2,2) circle (0.1cm);
\draw (2,2) circle (0.1cm);
\filldraw[white] (-1,3) circle (0.1cm);
\draw (-1,3) circle (0.1cm);
\filldraw[white] (3,-1) circle (0.1cm);
\draw (3,-1) circle (0.1cm);
\end{scope}
\end{tikzpicture}
\end{equation}

\begin{thm}\label{thm:kernel} The relations amongst networks imposed by Theorem~\ref{thm:welldefined} generate the kernel $\mcf \mcw_\lam(r) \surjects \mcw_\lam(U)$.  
\end{thm}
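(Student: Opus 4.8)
The plan is to prove the two inclusions of $K=\ker(\rho)$, where $\rho\colon\mcf\mcw_\lam(r)\surjects\mcw_\lam(U)$ is the evaluation map $W\mapsto\textbf{W}$ and $K\subseteq\mcf\mcw_\lam(r)$ denotes the span of the listed diagrammatic relations (two-valent vertex removal, bigon removal, leaf/dipole removal, and the square move \eqref{eq:squaremove}). The inclusion $K\subseteq\ker(\rho)$ is routine: by the characterization \eqref{eq:eora} of $\textbf{W}$, each listed relation reduces to an identity between counts of consistent labelings with a fixed list $\mcs$ of boundary label subsets (the bigon relation producing the stated binomial factor, the square relation the stated alternating sum), and these are checked directly. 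Furthermore, grouping $r$-weblike subgraphs according to their behaviour outside the region modified by a network move (G), (M1), (M2) or (R1)--(R3) -- exactly as in Example~\ref{eg:computeweightsii} -- one sees that a single such move produces a $\bbc$-combination of listed relations; by Theorem~\ref{thm:connectedness} it follows that $\Web_r(N;\lam)-\alpha^r\Web_r(N';\lam)\in K$ for \emph{every} pair of networks with $\tX(N)=\alpha\,\tX(N')$. This is the only use of Postnikov's connectedness theorem.

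For the reverse inclusion $\ker(\rho)\subseteq K$ the plan is to argue dually: since $K\subseteq\ker(\rho)$, it is enough to show that every linear functional $f$ on $\mcf\mcw_\lam(r)$ with $f|_{K}=0$ factors through $\rho$, i.e. $f=\eta\circ\rho$ for some $\eta\in\mcw_\lam(U)^{*}$. Fix such an $f$. The relation $\Web_r(N;\lam)-\alpha^r\Web_r(N';\lam)\in K$ from the previous paragraph shows that $N\mapsto f(\Web_r(N;\lam))$ depends only on $\tX(N)$ and is homogeneous of degree $r$ (it scales by $\alpha^r$ when $\tX(N)$ is scaled by $\alpha$). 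On the other hand, for a fixed underlying graph $G$, $f(\Web_r(N;\lam))=\sum_{W\subseteq G}\wt(W)\,f(W)$ is a polynomial in the edge weights of $G$; letting $G$ run over reduced plabic graphs of the top cell, these polynomials descend and patch, over the dense opens of $\tGr(k,n)$ swept out by the respective charts, to a single rational function $\bar g$ on $\tGr(k,n)$. Because the charts attached to the various top-cell graphs together cover $\tGr(k,n)$ outside codimension two and $\tGr(k,n)$ is normal (with the low-rank cases handled by the homogeneity and nonnegativity of the multidegree), $\bar g$ is regular, hence $\bar g\in\bbc[\Gr(k,n)]_\lam$.

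Now $\bbc[\Gr(k,n)]_\lam$ is spanned by the products $\Delta_{I_1}\cdots\Delta_{I_r}$ ranging over lists $\mci=(I_1,\dots,I_r)$ dual to boundary label subsets $\mcs$, so $\bar g=\sum_{\mcs}c_{\mcs}\,\Delta_{I_1}\cdots\Delta_{I_r}$ for some scalars $c_{\mcs}$, almost all zero. Proposition~\ref{lem:PluckerstoWebr} rewrites $\Delta_{I_1}(N)\cdots\Delta_{I_r}(N)$ as $\sgn(\mcs)\,\Web_r(N;\lam)\big|_{E_{\mcs}}$, so setting $\eta:=\sum_{\mcs}c_{\mcs}\,\sgn(\mcs)\,\eval(E_{\mcs})\in\mcw_\lam(U)^{*}$ gives $f(\Web_r(N;\lam))=\eta\big(\rho(\Web_r(N;\lam))\big)$ for every network $N$. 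The elements $\Web_r(N;\lam)$ span $\mcf\mcw_\lam(r)$ (on a fixed graph the monomials $\wt(W)$ are linearly independent, so each individual weblike subgraph is recovered by a linear combination of such $\Web_r(N;\lam)$), and therefore $f=\eta\circ\rho$. This yields $K^{\perp}=\ker(\rho)^{\perp}$, and hence $K=\ker(\rho)$, which is the assertion.

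The step I expect to be the main obstacle is the construction and extension of $\bar g$: one must verify that $f(\Web_r(N;\lam))$, a priori a polynomial in edge weights, genuinely descends through the (dominant but non-surjective, positive-dimensional-fibered) boundary measurement parametrizations to a common rational function on $\tGr(k,n)$, and that enough top-cell charts can be assembled so that the complement of their union has codimension at least two (the case $k=1$, where every top-cell chart omits a divisor, requires instead the homogeneity refinement). A smaller but genuine point is confirming that the square relation \eqref{eq:squaremove}, for all admissible leg multiplicities, really does account for every relation produced by a spider move (M1) -- i.e. that the listed relations span the full space $K$ of relations imposed by Theorem~\ref{thm:welldefined}.
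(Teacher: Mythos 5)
Your proposal follows the same dualization strategy as the paper's proof: show that the elements $\Web_r(N;\lam)$ span $\mcf \mcw_\lam(r)$ by a Vandermonde argument on the edge-weight monomials $\wt(W)$, and identify the functionals annihilating the network relations with $\bbc[\Gr(k,n)]_\lam$ via Proposition~\ref{lem:PluckerstoWebr}. One remark on framing: you set up $K$ as the span of the four diagrammatic relations, which is really Corollary~\ref{cor:rwebker}; since your key step only uses that $f$ annihilates the elements $\Web_r(N;\lam)-\alpha^r\Web_r(N';\lam)$, the argument does establish Theorem~\ref{thm:kernel} as stated, but the paper runs the logic in the opposite order (Theorem~\ref{thm:kernel} first, then trading single network moves for diagrammatic relations to get the corollary). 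Also, your claim that the square relation \eqref{eq:squaremove} can be ``checked directly'' as an identity of labeling counts is more work than you suggest and is not actually needed for the theorem -- only Theorem~\ref{thm:welldefined} is.

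The substantive point is the step you yourself flag: that a consistent polynomial function on networks descends to a regular function on $\tGr(k,n)$. This is exactly the step the paper does not prove either -- it simply asserts that the space of consistent functions of order $r$ is $\bbc[\Gr(k,n)]_{(r)}$. Your proposed repair via normality and a codimension-two chart cover does not close as stated: to exhibit $\bar g$ as a rational function one composes $g_G$ with a rational section of the boundary measurement map, and the known sections (Talaska's formulas, the Muller--Speyer twist) for \emph{every} reduced top-cell graph invert the same cyclic-interval Pl\"ucker coordinates $\Delta_{\{i,\dots,i+k-1\}}$. Hence the loci on which the various charts visibly exhibit regularity all omit the same $n$ divisors, and their union has codimension-one complement; the homogeneity trick you invoke for $k=1$ does not dispose of these divisors in general. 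To complete the argument one would need either to show regularity of $\bar g$ at the generic point of each divisorial positroid stratum by some other means (e.g.\ using that the stratum's open part is itself the image of a boundary measurement chart, together with an argument that $\bar g$ is algebraic, not merely well defined set-theoretically, across strata), or to replace this step entirely. This is a genuine gap, but it is one you share with the source rather than one you introduce.
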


\begin{cor}\label{cor:rwebker}
The four diagrammatic relations listed above generate the kernel $\mcf \mcw_{\lam}(r) \surjects \mcw_\lam(U)$.
\end{cor}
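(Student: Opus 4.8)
Corollary~\ref{cor:rwebker} follows from Theorem~\ref{thm:kernel} once one matches the local moves on networks of Section~\ref{secn:networks} with the four diagrammatic moves on $r$-weblike graphs listed above. There are two things to check: that each of the four moves produces an element of the kernel of $\mcf\mcw_\lam(r)\surjects\mcw_\lam(U)$, and that, conversely, every generator of the kernel furnished by Theorem~\ref{thm:kernel} lies in the span of the four moves.

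For the first point, the plan is to realize each diagrammatic move as a \emph{localization} of one of the network moves (M1), (M2), (R1), (R2), (R3). Fix a planar bipartite graph $G$ containing the relevant local configuration inside a small disk $D$, and let $N,N'$ be networks on $G,G'$ agreeing outside $D$ and related inside $D$ by the corresponding network move (after the usual gauge fixing, so that $\tX(N)=\tX(N')$). Every $r$-weblike subgraph $W\subset G$ restricts to a weblike piece outside $D$ and a weblike piece inside $D$, sharing the same tuple $\delta$ of multiplicities on the edges crossing $\partial D$; grouping the sum \eqref{eq:Webrdefn} by $\delta$ and by the outer piece, the identity $\Web_r(N;\lam)=\Web_r(N';\lam)$ of Theorem~\ref{thm:welldefined} becomes a sum, over outer pieces and over $\delta$, of ``outer piece glued onto a local identity inside $D$ with interface $\delta$''. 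These local identities are precisely two-valent vertex removal, bigon removal, leaf/dipole removal, and the square move \eqref{eq:squaremove}. Since gluing a fixed weblike context onto a weblike relation is a well-defined operation that carries the kernel to the kernel, each of the four moves lies in the kernel.

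For the second point, I would invoke Postnikov's Theorem~\ref{thm:connectedness}: if $\tX(N)=\tX(N')$ then $X(N)=X(N')$, so $N$ and $N'$ are connected by a finite sequence of (G), (M1), (M2), (R1), (R2), (R3). The gauge moves (G) only multiply every term of \eqref{eq:Webrdefn} by a common factor $\aa^r$ and so contribute nothing to the kernel beyond $0$, and the relation attached to the whole sequence telescopes into the sum of the relations attached to the individual moves. Running the localization of the previous paragraph in reverse, the relation attached to a single move (M1), (M2), (R1), (R2) or (R3) is a $\mcf\mcw_\lam(r)$-linear combination of copies of one of the four diagrammatic relations, each inserted into an arbitrary weblike context. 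Hence the four relations generate all of the network relations, which by Theorem~\ref{thm:kernel} generate the kernel.

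I expect the square-move case to be the only real obstacle. After localizing (M1) to a disk containing a single square face with edge weights $a,b,c,d$, the local identity for a fixed interface reads as an equality between a linear combination --- indexed by the multiplicity $s$ on one interior edge --- of weblike squares with monomial weights in $a,b,c,d$, and the corresponding combination over the rotated square with monomial weights in $a',b',c',d'$ given by \eqref{eq:spiderparameters}. To extract the stated form \eqref{eq:squaremove}, including the binomial coefficients $\binom{j-\ell+v-s}{t}$ (which arise from the bigon-removal factor $\binom{a+b}{b}$ of \eqref{eq:nonsquaremove} when two parallel strands in the rotated square are merged), one treats $a,b,c,d$ as formal variables, uses that $\Web_r(G;\lam)$ depends only on $\tX(N)$ (cf.~\eqref{eq:Webrdefn2}), clears the $(ac+bd)$-denominators coming from \eqref{eq:spiderparameters}, and compares coefficients. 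This bookkeeping --- keeping track of the interface multiplicities $j,\ell,v$ and the ranges of $s$ and $t$, and separating the bigon contributions from the genuine square contribution --- is where the work lies; the remaining moves (M2), (R1), (R2), (R3) localize to straightforward bijections of weblike subgraphs and require no comparable effort.
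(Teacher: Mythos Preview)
Your overall strategy is correct and matches the paper's: localize each of the network moves (M1), (M2), (R1)--(R3) to a small disk, group $r$-weblike subgraphs by their interface data $\delta$, and identify the resulting local identity with one of the four diagrammatic relations. You are also right that this gives both containments at once --- the single-move network relation \emph{is} a linear combination of instances of the four moves, and conversely each of the four moves arises this way. Your explicit invocation of Theorem~\ref{thm:connectedness} to reduce an arbitrary relation $\Web_r(N)-\Web_r(N')$ to a telescoping sum of single-move relations is fine; in the paper this step is absorbed into the proof of Theorem~\ref{thm:kernel}, where ``consistent'' is identified with ``invariant under local moves''.

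There is one genuine error in your sketch of the square-move computation. The binomial coefficients $\binom{j-\ell+v-s}{t}$ in \eqref{eq:squaremove} do \emph{not} come from bigon removal; the spider move creates no parallel edges, so \eqref{eq:nonsquaremove} is never invoked. What actually happens is this: from $\tX(N)=(ac+bd)\,\tX(N')$ one gets $\Web_r(N;\lam)=(ac+bd)^r\,\Web_r(N';\lam)$, and after substituting the weights $a',b',c',d'$ of \eqref{eq:spiderparameters} the right-hand side carries a residual factor $(ac+bd)^{j-\ell+v-s}$. Expanding this power by the binomial theorem and matching monomials in $a,b,c,d$ term by term is what produces the $\binom{j-\ell+v-s}{t}$ and forces $u=v-t$. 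So your plan ``clear denominators and compare coefficients'' is exactly right, but the bookkeeping you anticipate is about the binomial expansion of $(ac+bd)^{j-\ell+v-s}$, not about merging parallel strands.
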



Before proving Theorem~\ref{thm:kernel}, we will show how Corollary \ref{cor:rwebker} follows from it.

\begin{proof}[Proof of Corollary \ref{cor:rwebker}] 
Let us briefly explain how the relations \eqref{eq:nonsquaremove} and \eqref{eq:squaremove} can be deduced from network moves. The two-valent vertex removal (M2) for networks implies two-valent vertex removal \eqref{eq:nonsquaremove} for $r$-weblike graphs. The parallel edge removal (R1) for networks implies the bigon removal move \eqref{eq:nonsquaremove} for $r$-weblike graphs. Leaf and dipole removal (R2), (R3) for networks give leaf and dipole removal for $r$-weblike graphs.

Thus we now focus on the most interesting move on networks (M1), and show that it implies the square move for $r$-weblike graphs \eqref{eq:squaremove}. 

Suppose $N$ and $N'$ are are related by a square move involving edge weights $a,b,c,d \in \bbc^*$ 
\begin{equation}\label{eq:localfragment}
\begin{tikzpicture}[scale=0.7]
\draw (0,0) -- (2,0)--(2,2)--(0,2)--  (0,0);
\draw (3,3) -- (2,2);
\draw (-1,-1) -- (0,0);
\node at (-.4,1) {\small $a$};
\node at (2.4,1) {\small $c$};
\node at (1,2.4) {\small $b$};
\node at (1,-0.4) {\small $d$};

\filldraw[black] (0,0) circle (0.1cm);
\filldraw[black] (2,2) circle (0.1cm);
\filldraw[white] (2,0) circle (0.1cm);
\draw (2,0) circle (0.1cm);
\filldraw[white] (0,2) circle (0.1cm);
\draw (0,2) circle (0.1cm);
\filldraw[white] (3,3) circle (0.1cm);
\draw (3,3) circle (0.1cm);
\filldraw[white] (-1,-1) circle (0.1cm);
\draw (-1,-1) circle (0.1cm);
\end{tikzpicture}
\hspace{30pt}
\begin{tikzpicture}[scale=0.7]
\draw (0,0) -- (0,2)-- (2,2)-- (2,0)-- (0,0);
\draw (3,-1) -- (2,0);
\draw (-1,3) -- (0,2);
\node at (-.4,1) {\small $c'$};
\node at (2.4,1) {\small $a'$};
\node at (1,2.4) {\small $d'$};
\node at (1,-0.4) {\small $b'$};

\filldraw[black] (2,0) circle (0.1cm);
\filldraw[black] (0,2) circle (0.1cm);
\filldraw[white] (0,0) circle (0.1cm);
\draw (0,0) circle (0.1cm);
\filldraw[white] (2,2) circle (0.1cm);
\draw (2,2) circle (0.1cm);
\filldraw[white] (-1,3) circle (0.1cm);
\draw (-1,3) circle (0.1cm);
\filldraw[white] (3,-1) circle (0.1cm);
\draw (3,-1) circle (0.1cm);
\end{tikzpicture}.
\end{equation}

We need to compare the $r$-weblike subgraphs coming from the networks $N$ and $N'$. Each such weblike subgraph can be built by fixing the edge multiplicites $m(e)$ for edges~$e$ outside the local fragment ($j$ from the southwest, $\ell$ from the southeast, and so on, as described above), then choosing the edge multiplicities  inside the local fragment in a way that is compatible with the outside. Thus the numbers $j,\ell$ and $s-v$ are fixed throughout this computation. We will let $s$ be variable (so that $s$ determines $v$). 
\begin{equation}\label{eq:Wjlrs}
\begin{tikzpicture}[scale=0.7]
\draw (0,0) -- (2,0)--(2,2)--(0,2)--  (0,0);
\draw (3,3) -- (2,2);
\draw (-1,-1) -- (0,0);
\node at (-1,1) {\small $j-s$};
\node at (3.1,1) {\small $r-\ell-s$};
\node at (1,2.4) {\small $v$};
\node at (1,-0.4) {\small $s$};
\node at (-1.2,-.3) {\small $r-j$};
\node at (2.5,3.4) {\small $\ell-v+s$};

\filldraw[black] (0,0) circle (0.1cm);
\filldraw[black] (2,2) circle (0.1cm);
\filldraw[white] (2,0) circle (0.1cm);
\draw (2,0) circle (0.1cm);
\filldraw[white] (0,2) circle (0.1cm);
\draw (0,2) circle (0.1cm);
\filldraw[white] (3,3) circle (0.1cm);
\draw (3,3) circle (0.1cm);
\filldraw[white] (-1,-1) circle (0.1cm);
\draw (-1,-1) circle (0.1cm);
\end{tikzpicture}.
\end{equation}
The contribution to $\wt(W)$ from the edges in the local fragment is $a^{j-s}b^v c^{r-\ell-s}d^s$.

On the other hand, in $N'$, for the same value of outside multiplicities, the filling inside will look like 
\begin{equation}\label{eq:Wjlrsu}
\begin{tikzpicture}[scale=0.7]
\draw (0,0) -- (0,2)-- (2,2)-- (2,0)-- (0,0);
\draw (3,-1) -- (2,0);
\draw (-1,3) -- (0,2);
\node at (-1.2,1) {\small $r-j-u$};
\node at (-2.4,2.4) {\small $v+j-s$};
\node at (2.8,1) {\small $\ell-u$};
\node at (3.3,0) {\small $r-\ell$};
\node at (1,2.4) {\small $u-v+s$};
\node at (1,-0.4) {\small $u$};

\node at (-.5,-.3) {};

\filldraw[black] (2,0) circle (0.1cm);
\filldraw[black] (0,2) circle (0.1cm);
\filldraw[white] (0,0) circle (0.1cm);
\draw (0,0) circle (0.1cm);
\filldraw[white] (2,2) circle (0.1cm);
\draw (2,2) circle (0.1cm);
\filldraw[white] (-1,3) circle (0.1cm);
\draw (-1,3) circle (0.1cm);
\filldraw[white] (3,-1) circle (0.1cm);
\draw (3,-1) circle (0.1cm);
\end{tikzpicture}
\end{equation}
where the values of $j,\ell,v-s$ agree with the fixed values, but $u$ can vary. Notice that $s$ and $v$ only occur in \eqref{eq:Wjlrsu} together via their difference $s-v$.

Now we denote by $W_s$ the weblike subgraph indexed by $s$ in \eqref{eq:Wjlrs} and denote by $W'_u$ the one indexed by $u$ in \eqref{eq:Wjlrsu}. 
We sum up the boldface $\textbf{W}_s$ with their weights and the $\textbf{W}_u$ with their weights. Since the points in the affine cone are related by 
$\tX(N) = (ac+bd)\tX(N')$, it follows that $\Web_r(N) = (ac+bd)^r\Web_r(N')$. Writing out what this means: 


\begin{align*}
\sum_s a^{j-s}b^v c^{r-\ell-s}d^s \, \textbf{W}_s   &= (a c +b d)^r \sum_{u} (a')^{\ell-u}(b')^u(c')^{r-j-u}(d')^{u-v+s} \, \textbf{W}'_u\\
&= (a c +b d)^{j-\ell+v-s} \sum_{u} a^{\ell-u}b^u c^{r-j-u}d^{u-v+s} \, \textbf{W}'_u. 
\end{align*}

Now we fix a value of $s$, and we also fix the exponent $t$ of $b d$ in the binomial formula expansion of $(a c +b d)^{v+j-\ell-s}$. In order to match the weights on the left and right hand side, the value of $u$ must be $v-t$. The corresponding equality is 
\begin{equation}\label{eq:squareswitchmove}
\textbf{W}_s   = \sum_t \binom {j-\ell+v-s}{t} \textbf{W}'_{v-t} \in \mcw_\lam(U), 
\end{equation}
which is the square move \eqref{eq:squaremove}. 

Thus we see that the local moves on networks imply the four diagrammatic moves for $r$-weblike graphs that preserve the corresponding tensor invariants. 
\end{proof}

\begin{proof}[Proof of Theorem \ref{thm:kernel}]
Now, we will give an ``abstract proof'' that diagrammatic moves amongst $r$-weblike subgraphs we obtain in this way are a complete set of relations. 

Let us denote by $K$ the kernel $\mcf \mcw_\lam(r) \to \mcw_\lam(U)$ and abbreviate $\mcf \mcw = \mcf \mcw_\lam(r)$. Let $\mcn(k)$ denote the set of networks of excedance $k$ whose underlying graph $G$ represents the top cell. We will denote by $\bbc[\mcn(k)]$ the linear space of functions $f \colon \mcn(k) \to \bbc$ with the property that for any fixed graph $G$, the map $f$ is a polynomial in the edge weights on $G$. The space $\bbc[\mcn(k)]$ is a very large space of functions.

We say that $f \in \bbc[\mcn(k)]$ is \emph{consistent of order $r$} if $f(N) = \aa^r f(N')$ whenever $\tX(N) = \aa \tX(N')$.  The space of consistent functions of order $r$ is canonically identified with the space
$$\bbc[\Gr(k,n)]_{(r)} := \bigoplus_\lam \bbc[\Gr(k,n)]_{(\lam)}$$
where the sum is over $\lam$ such that $\lam_1+\cdots+\lam_n=kr$.

There is a map $\Imm^\dagger : \mcf \mcw^* \to \bbc[\mcn]$ sending a functional $\varphi \in \mcf \mcw^*$ to the function on networks $N \mapsto \varphi(\Web_r(N;\lam))$, where we now think of $\Web_r(N;\lam)$ as an element of $\mcf \mcw$. It is defined similarly to the immanant map $\Imm$ but on the space of formal sums of weblike graphs. This map obviously fits into a commutative diagram 
\begin{equation}\label{eq:kerneldiagram}
\xymatrix{ (\mcf \mcw / K)^*  \ar@{^(->}[d] \ar[drr]^\cong & &  \\
\mcf \mcw^*  \ar[r]_{\Imm^\dagger}  & \bbc[\mcn(k)] & \ar@{_(->}[l]  \bbc[\Gr(k,n)]_{(\lam)}
}.
\end{equation}

The diagonal arrow in this diagram is the immanant map $\Imm: \mcw_\lam(U)^* \to \bbc[\Gr(k,n)]_\lam$. Going diagonally and then left, we arrive at the space of consistent functions of order $r$ on networks inside $\bbc[\mcn(k)]$ which have degree $\lam$. 

The image of the downward arrow is $K^\perp$.

The main claim which needs to be checked is that $\Imm^\dagger$ is an injection. Let us first see that the fact that $\Imm^\dagger$ is an injection allows us to finish the proof.  Suppose that $\Imm^\dagger$ is an injection.  Then it follows that $\varphi \in \mcf \mcw^*$ defines a consistent function on networks if and only if  $\varphi \in K^\perp$. To put it another way, whether or not $\varphi$ lies in $K^\perp$ is detected by whether $\Imm^\dagger(\varphi)$ is a consistent function on networks. A function in $\bbc[\mcn(k)]$ is consistent if an only if it transforms appropriately under local moves. Therefore, the relations defining $K^\perp$ are exactly those forced on $\varphi$ by the local moves on networks (cf.~Proposition~\ref{prop:localmoves}). In other words, we need that
$$\Imm^\dagger(\varphi)(N)=\alpha^r\Imm^\dagger(\varphi)(N') \qquad \text{whenever} \qquad \tX(N)=\alpha\tX(N').$$

We now prove the injectivity of $\Imm^\dagger$.  Namely, if a functional $\varphi \in \mcf \mcw^*$ satisfies $\varphi(\Web_r(N;\lam)) = 0$ for all networks $N $ of excedance $k$, then $\varphi$ must be $0$.  Equivalently, we must prove that the various $\{\Web_r(N;\lam)\}$ span $\mcf \mcw $ as we vary $N$ over all networks representing the top cell.  

Now let $G$ be a fixed bipartite graph that represents the top cell.  The graph $G$ has a finite number of $r$-weblike subgraphs. Let  $W_1,\dots,W_\ell$ be the list of all of the $r$-weblike subgraphs of $G$ having degree $\lam$. We claim that there exists a list $N_1,\dots,N_\ell$ of networks with underlying graph $G$ such that the matrix
\begin{equation}\label{eq:transitionmatrix}
(\wt_{N_i}(W_j))_{i,j=1,\dots,\ell}
\end{equation} 
is invertible. First suppose that such $N_1,\dots,N_\ell$ exist. The rows of the matrix give the terms of $\Web_r(N_i;\lam)$, so for each $j$ there exists a linear combination of the $\{\Web_r(N_i;\lam)\}$ giving the $r$-weblike subgraph $W_j$. By varying $G$, we then conclude that any $r$-weblike subgraph $W$ is in the span of $\{\Web_r(N;\lam)\}$, and $\{\Web_r(N;\lam)\}$ spans $\mcf \mcw$.

The existence of $N_1,\dots,N_\ell$ follows from a standard argument. The expression for $\Web_r(N; \lam)$ consists of a monomial in the edge weights of $G$ multiplied by each weblike subgraph $W_i$. Because the $W_i$ all differ, the monomial attached to each web has a different multidegree. Then a standard Vandermonde-type argument shows that one can specialize the edge-weights appropriately so that the matrix $(\wt_{N_i}(W_j))$ is invertible.
\end{proof}


\subsection{Cautis-Kamnitzer-Morrison relations}\label{CKMrel}

Recall we denote by $\mcf \mcs_\lam(r)$ the space of formal sums of tagged $\SL_r$-web diagrams of degree $\lam$. Any tagged web gives a tensor invariant, so that we get a surjection $\mcf \mcs (r) \surjects \mcw_\lam(r)$. Cautis, Kamnitzer, and Morrison \cite{SkewHowe} gave a set of diagrammatic relations describing the kernel of this surjection. Let us briefly write down these relations (see \cite[Section 2]{SkewHowe} for pictures and discussion):
\begin{itemize}
\item Switching a tag \cite[(2.3)]{SkewHowe}. If $e$ is a tagged edge with multiplicities $a$ and $r-a$, then changing which side the tag is on contributes a factor $(-1)^{a(r-a)}$. 
\item Tag migration \cite[(2.7),(2.8)]{SkewHowe}. Suppose an interior white vertex $v$ has an incident edge $e$ with a tag pointing in the clockwise direction around $v$ (the tag can be either a pair or a source tag, and the vertex $v$ can be either a shuffle or wedge vertex). Let $e'$ be the next edge incident to $v$ in the clockwise direction from $v$. Then the tag on $e$ can be migrated to a tag on $e'$ pointing in the counterclockwise direction, without any change of sign. 
\item Wedge product is associative \cite[(2.6)]{SkewHowe} and shuffle is coassociative.
\item Bigon removal \cite[(2.4)]{SkewHowe}. A pair of directed edges $v \to u$, of multiplicities $a$ and $b$, can be replaced by a single edge $v \to u$ with multiplicity $a+b$, at the cost of a factor $(-1)^{ab}\binom {a+b}{b}$. 

\item The square move for tagged webs: 
\begin{equation}\label{eq:squareswitchtaggedII}
\begin{tikzpicture}[scale=0.5]
\node at (8.5,1) { \small $=\displaystyle \sum_t \binom {j-\ell+v-s}{t}$};
\draw [decoration={markings,mark=at position .6 with {\arrow[scale=1.7]{>}}},
    postaction={decorate},
    shorten >=0.4pt] (0,0) -- (2,0);
\draw [decoration={markings,mark=at position .6 with {\arrow[scale=1.7]{>}}},
    postaction={decorate},
    shorten >=0.4pt] (2,0)--(2,2);
\draw [decoration={markings,mark=at position .6 with {\arrow[scale=1.7]{>}}},
    postaction={decorate},
    shorten >=0.4pt] (0,0) -- (0,2);
\draw [decoration={markings,mark=at position .6 with {\arrow[scale=1.7]{>}}},
    postaction={decorate},
    shorten >=0.4pt] (2,2) -- (3,3);
\draw [decoration={markings,mark=at position .6 with {\arrow[scale=1.7]{>}}},
    postaction={decorate},
    shorten >=0.4pt] (0,2) -- (-1,3);
\draw [decoration={markings,mark=at position .6 with {\arrow[scale=1.7]{>}}},
    postaction={decorate},
    shorten >=0.4pt] (-1,-1) -- (0,0);
\draw [decoration={markings,mark=at position .6 with {\arrow[scale=1.7]{>}}},
    postaction={decorate},
    shorten >=0.4pt] (3,-1)--(2,0);
\draw [decoration={markings,mark=at position .6 with {\arrow[scale=1.7]{>}}},
    postaction={decorate},
    shorten >=0.4pt] (2,2)--(0,2);
\node at (-1,1) {\small $j-s$};
\node at (3.3,1) {\small $\ell+s$};
\node at (1,2.4) {\small $v$};
\node at (1,-0.4) {\small $s$};
\node at (-1.6,-1) {\small $j$};
\node at (3,3.6) {\small $\ell-v+s$};
\node at (-1,3.6) {\small $j-s+v$};
\node at (3.5,-1) {\small $\ell$};

\filldraw[black] (0,0) circle (0.1cm);
\filldraw[black] (2,2) circle (0.1cm);
\filldraw[white] (2,0) circle (0.1cm);
\draw (2,0) circle (0.1cm);
\filldraw[white] (0,2) circle (0.1cm);
\draw (0,2) circle (0.1cm);

\begin{scope}[xshift = 15cm]
\draw [decoration={markings,mark=at position .6 with {\arrow[scale=1.7]{>}}},
    postaction={decorate},
    shorten >=0.4pt] (2,0)--(.2,0);
\draw [decoration={markings,mark=at position .6 with {\arrow[scale=1.7]{>}}},
    postaction={decorate},
    shorten >=0.4pt] (0,0)--(0,2);
\draw [decoration={markings,mark=at position .6 with {\arrow[scale=1.7]{>}}},
    postaction={decorate},
    shorten >=0.4pt] (0,2)--(2,2);
\draw [decoration={markings,mark=at position .6 with {\arrow[scale=1.7]{>}}},
    postaction={decorate},
    shorten >=0.4pt] (2,0)--(2,2);
\draw [decoration={markings,mark=at position .6 with {\arrow[scale=1.7]{>}}},
    postaction={decorate},
    shorten >=0.4pt](3,-1) -- (2,0);
\draw [decoration={markings,mark=at position .6 with {\arrow[scale=1.7]{>}}},
    postaction={decorate},
    shorten >=0.4pt](-1,-1) -- (0,0);
\draw [decoration={markings,mark=at position .6 with {\arrow[scale=1.7]{>}}},
    postaction={decorate},
    shorten >=0.4pt] (2,2) -- (3,3);
\draw [decoration={markings,mark=at position .6 with {\arrow[scale=1.7]{>}}},
    postaction={decorate},
    shorten >=0.4pt] (0,2) -- (-1,3);
\node at (-1.5,1) {\small $j+v-t$};
\node at (3,3.6) {\small $\ell-v+s$};
\node at (-1,3.6) {\small $j-s+v$};
\node at (3.5,1) {\small $\ell-v+t$};
\node at (3.5,-1) {\small $\ell$};
\node at (1,2.4) {\small $s-t$};
\node at (1,-0.4) {\small $v-t$};
\node at (-.5,-.3) {};
\node at (-1.6,-1) {\small $j$};
\filldraw[black] (2,0) circle (0.1cm);
\filldraw[black] (0,2) circle (0.1cm);
\filldraw[white] (0,0) circle (0.1cm);
\draw (0,0) circle (0.1cm);
\filldraw[white] (2,2) circle (0.1cm);
\draw (2,2) circle (0.1cm);
\end{scope}
\end{tikzpicture},
\end{equation}

\end{itemize}

\begin{thm}\label{thm:CKM}[Cautis-Kamnitzer-Morrison]
The five relations listed above generate the kernel $\mcf \mcs (\SL_r) \surjects \mcw_\lam(U)$.
\end{thm}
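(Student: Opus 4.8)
The plan is to deduce Theorem~\ref{thm:CKM} from Corollary~\ref{cor:rwebker} by transporting the completeness statement from the calculus of $r$-weblike graphs to the calculus of tagged $\SL_r$-webs. Write $q\colon \mcf \mcs_\lam(r) \surjects \mcw_\lam(U)$ for the evaluation surjection and let $R \subseteq \mcf \mcs_\lam(r)$ be the span of the five Cautis--Kamnitzer--Morrison relations; the goal is to show $\ker q = R$. The inclusion $R \subseteq \ker q$ is the routine direction, and I would dispatch it relation by relation: switching a tag is the elementary identity that the pairings $\bigwedge^a \otimes \bigwedge^{r-a}\to\bigwedge^r$ and $\bigwedge^{r-a}\otimes\bigwedge^a\to\bigwedge^r$ differ by $(-1)^{a(r-a)}$; tag migration and wedge/shuffle (co)associativity are immediate from the definitions \eqref{eq:wedgeeq}--\eqref{eq:shufflesigns}; bigon removal is a one-line computation; and the square move for tagged webs \eqref{eq:squareswitchtaggedII} can be obtained exactly as in the proof of Corollary~\ref{cor:rwebker}, by placing compatible taggings on the two local fragments of \eqref{eq:localfragment} and checking that the relation $\tX(N) = (ac+bd)\tX(N')$ reproduces the coefficients $\binom{j-\ell+v-s}{t}$ along with the correct signs.

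For the reverse inclusion I would set up a dictionary between the two quotient spaces. The first step is a normalization: using only tag switching, tag migration, and (co)associativity, one shows that modulo $R$ every tagged web $\hat W$ is congruent to $\pm$ a \emph{standard} tagged web, i.e.\ one of the form $\hat W(W,\mco)$ with $W$ an $r$-weblike graph and $\mco$ a perfect-orientation tagging as in Remark~\ref{rmk:perfectorientations}. Here (co)associativity merges adjacent wedge (resp.\ shuffle) vertices and orients all non-tag edges white-to-black so that the underlying graph is a bipartite weblike graph, while tag migration and switching sweep the remaining tags into one perfect-orientation family of pair tags, with the accumulated $(-1)^{a(r-a)}$ factors absorbed into a global sign. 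This gives a well-defined surjection
\[
\psi\colon \mcf \mcw_\lam(r) \longrightarrow \mcf \mcs_\lam(r)/R, \qquad W \longmapsto \sgn(W,\mco)\,\hat W(W,\mco) \bmod R,
\]
where independence of the choice of $\mco$ follows because any two taggings of a fixed $W$ are linked by tag switching and migration moves and the resulting sign change agrees with $\sgn(W,\mco)/\sgn(W,\mco')$, since $\sgn(W,\mco)\hat W(W,\mco) = \textbf{W}$ is tagging-independent by Lemma~\ref{lem:eora}. Under $\psi$ the CKM bigon-removal and square-move relations go to the bigon-removal \eqref{eq:nonsquaremove} and square-move \eqref{eq:squareswitchmove} relations on $r$-weblike graphs, and the remaining two-valent vertex, leaf, and dipole relations of Corollary~\ref{cor:rwebker} are produced from (co)associativity, bigon removal, and tag manipulations. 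Hence $\psi$ descends to a surjection $\mcf \mcw_\lam(r)/K' \to \mcf \mcs_\lam(r)/R$, where $K'$ is the span of the four $r$-weblike relations, and this surjection commutes with the two maps to $\mcw_\lam(U)$ because $\textbf{W} = \sgn(W,\mco)\hat W(W,\mco)$ by \eqref{eq:eoraII}. Since $\mcf \mcw_\lam(r)/K' \to \mcw_\lam(U)$ is an isomorphism by Corollary~\ref{cor:rwebker}, the factorization forces both $\psi$ (already surjective) and $\mcf \mcs_\lam(r)/R \to \mcw_\lam(U)$ to be isomorphisms, i.e.\ $\ker q = R$.

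The hard part will be the normalization step together with its sign bookkeeping: one must verify that tag switching (contributing $(-1)^{a(r-a)}$) and the sign-free tag-migration moves genuinely connect \emph{every} pair of taggings of a fixed $W$, that the net sign they generate matches the change in $\sgn(W,\mco)$ dictated by Lemma~\ref{lem:eora}, and that general tagged webs --- including ones whose underlying graph is not of network type, with interior sources or non-bipartite vertex shapes --- can indeed be brought to standard form using (co)associativity first. Keeping track of the degenerate small cases (tag cycles evaluating to scalars, multiplicity-$r$ edges, isolated tagged edges) is where the genuine effort lies; once these combinatorial moves are confirmed to realize all the needed transitions, the completeness of the five relations is a formal consequence of the already-established picture for $r$-weblike graphs.
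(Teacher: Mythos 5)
Your proposal is correct and follows essentially the same route as the paper: both arguments reduce Theorem~\ref{thm:CKM} to Corollary~\ref{cor:rwebker} by using the tag-switching/migration and (co)associativity relations to pass between tagged webs and $r$-weblike graphs, with the sign bookkeeping controlled by Lemma~\ref{lem:eora} and the square move lifted exactly as in \eqref{eq:squareswitchtagged}. The only difference is cosmetic — you build a map $\mcf\mcw_\lam(r)\to\mcf\mcs_\lam(r)/R$ and check it is a well-defined surjection, whereas the paper goes the other way via $\hat W\mapsto \sgn(W,\mco)W$ and analyzes the kernel — and both treatments leave the same normalization details (reducing an arbitrary tagged web to standard bipartite form) at the same level of sketch.
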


We note that the results in \cite{SkewHowe} are more general than ours -- they describe all diagrammatic relations amongst fundamental representations of the quantum group $U_q(\mathfrak{s}\mathfrak{l}_n)$, whereas our current proof only makes sense in the ``classical'' ($q=1$) setting. 
Additionally, they provide certain redundant relations that are needed to describe the kernel if one wishes to work over $\bbz[q,q^{-1}]$ instead of $\bbc(q)$. In our version, we work over $\bbc$.

\medskip

Now we explain how Theorem \ref{thm:CKM} can be deduced from our Corollary \ref{cor:rwebker}.

There are three differences between $r$-weblike graphs and webs. First, and most important, $r$-weblike graphs do not come with a tagging. Recall that tagging an $r$-weblike graph $W$ gives a web $\hat{W} = \hat{W}(W,\mco)$, where $\mco$ refers to the data of the tagging. From any $r$-weblike graph $W$, we associate a canonical invariant $\textbf{W} := \sgn(W,\mco)\hat{W}$. One of the advantages of working with our tensor invariants $\textbf{W}$ is that the signs come naturally built-in.

The second difference between $r$-weblike graphs and webs is purely a matter of convention and is less serious: Cautis-Kamnitzer-Morrison require that vertices in their webs are of degree three, whereas ours can be arbitrary. For this reason, they need additional associativity relations that we do not. These relations follow from two-valent vertex removal on networks. 

The third difference is also less serious. We require our graphs to be bipartite, so that in order to go from a web to an $r$-weblike graph, we will sometimes need to contract edges. The fact that edge contractions give $r$-weblike graphs that are related by allowable moves again follows from two-valent vertex removal.

We now proceed with the proof. The first two relations in Theorem \ref{thm:CKM} have to do with tagging. These tagging relations generate a subspace of relations $\mcr \subset \mcf \mcs_\lam(r)$. 

There is a linear map $\mcf \mcs_\lam(r) \surjects \mcf \mcw_\lam(r)$ defined by replacing a tagged web $\hat{W} = \hat{W}(W,\mco)$ by its ``underlying $r$-weblike subgraph'' with a sign: $\hat{W} \mapsto \sgn(W,\mco)W \in \mcf \mcw_\lam(r)$. 

Let us be more specific about how to obtain an $r$-weblike graph from $\hat{W}$. Any edges in $\hat{W}$ joining vertices of the same color should be contracted, so that we end up with a bipartite graph. If necessary, we may need to use tag switches and migrations before contracting. Once this is done, if $e$ is an edge (or half-edge) with multiplicity $a$, and if $e$ points from a white vertex to a black vertex, then $e$ has multiplicity $r-a$ in $W$, while if it points away from a black vertex towards a white vertex, it has multiplicity $a$. Finally, forget about all the directions on the edges. The resulting labeled diagram is an $r$-weblike graph.

The quotient $\mcf \mcs_\lam(r) / \mcr$ is exactly the image of $\mcf \mcs_\lam(r)$ in $\mcf \mcw_\lam(r)$. We have that the map $\mcf \mcs_\lam(r) \surjects \mcw_\lam(r)$ factors as 
$$\mcf \mcs_\lam(r) \surjects  \mcf \mcw_\lam(r) \surjects \mcw_\lam(r)$$

There are relations on $\mcf \mcs_\lam(r)$ that come from the kernel of the first map. These are precisely the relations $\mcr \subset \mcf \mcs_\lam(r)$ coming from tagging. These relations follow from our analysis of tags in Lemma~\ref{lem:eora}.

The remaining relations on $\mcf \mcs_\lam(r)$ come from ``tagging'' (i.e., adding tags to) relations in $\mcf \mcw_\lam(r)$. Thus, to find a complete set of diagrammatic relations amongst tagged webs, we need to lift the relations defining $\ker(\mcf \mcw_\lam(r) \surjects \mcw_\lam(r))$ to $\mcf \mcs_\lam(r)$. 

Let us do this in the most interesting case of the square move. The square move for $r$-weblike graphs \eqref{eq:squaremove} can be lifted to a relation in $\mcf \mcs_\lam(r)$ by tagging as follows 
\begin{equation}\label{eq:squareswitchtagged}
\begin{tikzpicture}[scale=0.5]
\node at (-4.3,1) {\small $\sgn(W,\mco)$};
\node at (10.5,1) { \small $=\displaystyle \sum_t \sgn(W'_t,\mco'_t) \, \binom {j-\ell+v-s}{t}$};
\draw [decoration={markings,mark=at position .6 with {\arrow[scale=1.7]{>}}},
    postaction={decorate},
    shorten >=0.4pt] (0,0) -- (2,0);
\draw [decoration={markings,mark=at position .6 with {\arrow[scale=1.7]{>}}},
    postaction={decorate},
    shorten >=0.4pt] (2,0)--(2,2);
\draw [decoration={markings,mark=at position .6 with {\arrow[scale=1.7]{>}}},
    postaction={decorate},
    shorten >=0.4pt] (0,0) -- (0,2);
\draw [decoration={markings,mark=at position .6 with {\arrow[scale=1.7]{>}}},
    postaction={decorate},
    shorten >=0.4pt] (2,2) -- (3,3);
\draw [decoration={markings,mark=at position .6 with {\arrow[scale=1.7]{>}}},
    postaction={decorate},
    shorten >=0.4pt] (0,2) -- (-1,3);
\draw [decoration={markings,mark=at position .6 with {\arrow[scale=1.7]{>}}},
    postaction={decorate},
    shorten >=0.4pt] (-1,-1) -- (0,0);
\draw [decoration={markings,mark=at position .6 with {\arrow[scale=1.7]{>}}},
    postaction={decorate},
    shorten >=0.4pt] (3,-1)--(2,0);
\draw [decoration={markings,mark=at position .6 with {\arrow[scale=1.7]{>}}},
    postaction={decorate},
    shorten >=0.4pt] (2,2)--(0,2);
\node at (-1,1) {\small $j-s$};
\node at (3.3,1) {\small $\ell+s$};
\node at (1,2.4) {\small $v$};
\node at (1,-0.4) {\small $s$};
\node at (-1.6,-1) {\small $j$};
\node at (3,3.6) {\small $\ell-v+s$};
\node at (-1,3.6) {\small $j-s+v$};
\node at (3.5,-1) {\small $\ell$};

\filldraw[black] (0,0) circle (0.1cm);
\filldraw[black] (2,2) circle (0.1cm);
\filldraw[white] (2,0) circle (0.1cm);
\draw (2,0) circle (0.1cm);
\filldraw[white] (0,2) circle (0.1cm);
\draw (0,2) circle (0.1cm);

\begin{scope}[xshift = 19cm]
\draw [decoration={markings,mark=at position .6 with {\arrow[scale=1.7]{>}}},
    postaction={decorate},
    shorten >=0.4pt] (2,0)--(.2,0);
\draw [decoration={markings,mark=at position .6 with {\arrow[scale=1.7]{>}}},
    postaction={decorate},
    shorten >=0.4pt] (0,0)--(0,2);
\draw [decoration={markings,mark=at position .6 with {\arrow[scale=1.7]{>}}},
    postaction={decorate},
    shorten >=0.4pt] (0,2)--(2,2);
\draw [decoration={markings,mark=at position .6 with {\arrow[scale=1.7]{>}}},
    postaction={decorate},
    shorten >=0.4pt] (2,0)--(2,2);
\draw [decoration={markings,mark=at position .6 with {\arrow[scale=1.7]{>}}},
    postaction={decorate},
    shorten >=0.4pt] (3,-1) -- (2,0);
\draw [decoration={markings,mark=at position .6 with {\arrow[scale=1.7]{>}}},
    postaction={decorate},
    shorten >=0.4pt] (-1,-1) -- (0,0);
\draw [decoration={markings,mark=at position .6 with {\arrow[scale=1.7]{>}}},
    postaction={decorate},
    shorten >=0.4pt] (2,2) -- (3,3);
\draw [decoration={markings,mark=at position .6 with {\arrow[scale=1.7]{>}}},
    postaction={decorate},
    shorten >=0.4pt] (0,2) -- (-1,3);
\node at (-1.5,1) {\small $j+v-t$};
\node at (3,3.6) {\small $\ell-v+s$};
\node at (-1,3.6) {\small $j-s+v$};
\node at (3.5,1) {\small $\ell-v+t$};
\node at (3.5,-1) {\small $\ell$};
\node at (1,2.4) {\small $s-t$};
\node at (1,-0.4) {\small $v-t$};
\node at (-.5,-.3) {};
\node at (-1.6,-1) {\small $j$};
\filldraw[black] (2,0) circle (0.1cm);
\filldraw[black] (0,2) circle (0.1cm);
\filldraw[white] (0,0) circle (0.1cm);
\draw (0,0) circle (0.1cm);
\filldraw[white] (2,2) circle (0.1cm);
\draw (2,2) circle (0.1cm);
\end{scope}
\end{tikzpicture},
\end{equation}
where we have chosen a tagging outside the local fragment that is compatible with the tagging inside the fragment shown above. 
Then \eqref{eq:squareswitchtaggedII} follows once one checks that all of the signs, $\sgn(W,\mco)$ and $\sgn(W'_t,\mco'_t)$, are equal and do not depend on $t$. This can be proved by analyzing the canonical flows through the local fragments, as described in the proof of Lemma~\ref{lem:eora}. 

We briefly mention how to lift the remaining relations. Two-valent vertex removal for $r$-weblike graphs allows us to contract and expand edges to derive the associativity of wedge for tagged webs. In a similar fashion, bigon removal for $r$-weblike graphs lifts to the bigon removal for tagged webs. 




%

\section{Positroids and webs}\label{secn:positroids}
In this section we fix a positroid $\mcm$ and consider a graph $G$ such that $\mcm(G) = \mcm$ (see Section \ref{ssec:positroids}).  Let $\Pi_\mcm = \Pi_G$ be the corresponding positroid variety.  

\subsection{The immanant map for a positroid}
Let $\mfi(\mcm) \subset \bbc[\Gr(k,n)]$ be the (homogeneous) ideal generated by  $\{\Delta_I: I \notin M\}$.  By \cite{KLS}, $\mfi(\mcm)$ is
a prime ideal, and its vanishing set in $\Gr(k,n)$ is the positroid variety $\Pi_\mcm$.  The homogeneous coordinate ring $\Pi_\mcm$ is thus given as the quotient
\begin{equation}
\label{eq:quotient}
\bbc[\Pi_\mcm] =\bbc[\Gr(k,n)]/\mfi(\mcm),
\end{equation}
noting that by \cite{KLS}, $\Pi_\mcm \subseteq \Gr(k,n)$ is projectively normal.
The grading \eqref{eq:coordringgrading} descends to a grading on $\bbc[\Pi_\mcm]$. One of our goals will be to give a description of the graded pieces~$\bbc[\Pi_\mcm]_\lam$ via tensor invariants.

Recall that the immanant map (for the top cell) is an isomorphism
$$\Imm: \mcw_\lam(U)^* \rightarrow \bbc[\Gr(k,n)]_\lam.$$
We now define an immanant map $\Imm_\mcm: \mcw_\lam(U)^* \rightarrow \bbc[\Pi_\mcm]_\lam$ for the positroid $\mcm$ by the equation
$$\Imm_\mcm(\varphi)(\tX(N))=\varphi(\Web_r(N;\lam))$$
for $N$ any assignment of edge weights to $G$.

\begin{prop}\label{prop:annoying2} The function $\tX(N) \mapsto \varphi(\Web_r(N;\lam))$ extends to a (uniquely defined) element of $\bbc[\Pi_\mcm]_\lam$. 
\end{prop}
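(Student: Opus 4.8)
The plan is to repeat the proof of Proposition~\ref{prop:annoying} almost verbatim, with $\tGr(k,n)$ replaced by the affine cone $\tPi_\mcm$ over the positroid variety and with Theorem~\ref{thm:postnikov} supplying the needed Zariski-density statement. First I would recall that the evaluation functionals $\eval(E_\mcs) \in \mcw_\lam(U)^*$, as $\mcs$ ranges over lists of boundary label subsets satisfying \eqref{eq:SMultiset}, span $\mcw_\lam(U)^*$. So it is enough to treat $\varphi = \eval(E_\mcs)$ and then extend by linearity.

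Fix such an $\mcs$, and let $\mci = (I_1,\dots,I_r)$ be the dual list of $k$-subsets of $[n]$. Proposition~\ref{lem:PluckerstoWebr} gives, for \emph{every} network $N$ with underlying graph $G$, the pointwise identity
\[
\eval(E_\mcs)\bigl(\Web_r(N;\lam)\bigr) = \sgn(\mcs)\,\Delta_{I_1}(N)\cdots\Delta_{I_r}(N).
\]
The right-hand side is the value at $\tX(N)$ of the product of Pl\"ucker coordinates $\sgn(\mcs)\,\Delta_{I_1}\cdots\Delta_{I_r} \in \bbc[\Gr(k,n)]_\lam$; let $\overline{f}_\mcs \in \bbc[\Pi_\mcm]_\lam$ denote its image under the quotient map $\bbc[\Gr(k,n)] \twoheadrightarrow \bbc[\Gr(k,n)]/\mfi(\mcm) = \bbc[\Pi_\mcm]$ of \eqref{eq:quotient}. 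Thus $\tX(N) \mapsto \eval(E_\mcs)(\Web_r(N;\lam))$ is the restriction to the set $\{\tX(N)\}$ of the regular function $\overline{f}_\mcs$ on $\tPi_\mcm$.

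By Theorem~\ref{thm:postnikov}, as $N$ ranges over all nonzero-complex edge-weightings of $G$ the points $X(N)$ sweep out a Zariski-dense subset of $\Pi_\mcm$, hence the cone points $\tX(N)$ are Zariski dense in $\tPi_\mcm$. Since $\mfi(\mcm)$ is prime by \cite{KLS}, the ring $\bbc[\Pi_\mcm]$ is a domain, so $\tPi_\mcm$ is reduced and irreducible and a regular function on it is determined by its restriction to any Zariski-dense subset. Therefore $\overline{f}_\mcs$ is the unique element of $\bbc[\Pi_\mcm]_\lam$ restricting to $\tX(N)\mapsto \eval(E_\mcs)(\Web_r(N;\lam))$ on $\{\tX(N)\}$. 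Extending linearly over the spanning set $\{\eval(E_\mcs)\}$ — which is unambiguous, since two representations of a single $\varphi$ give functions on $\{\tX(N)\}$ that literally agree and hence share the same unique regular extension — we obtain for any $\varphi \in \mcw_\lam(U)^*$ a unique $\Imm_\mcm(\varphi) \in \bbc[\Pi_\mcm]_\lam$ with $\Imm_\mcm(\varphi)(\tX(N)) = \varphi(\Web_r(N;\lam))$. In fact this shows $\Imm_\mcm(\varphi)$ is simply the image of $\Imm(\varphi) \in \bbc[\Gr(k,n)]_\lam$ under the quotient map, which in particular makes clear that $\Imm_\mcm$ does not depend on the choice of $G$ with $\mcm(G)=\mcm$.

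The only genuinely new ingredients beyond Proposition~\ref{prop:annoying} are the density statement of Theorem~\ref{thm:postnikov} and the primality of $\mfi(\mcm)$ from \cite{KLS}; granted these, the argument is purely formal, so I do not expect a real obstacle — the one point worth stating carefully is the well-definedness of the linear extension over the (non-basis) spanning set $\{\eval(E_\mcs)\}$, which is handled above.
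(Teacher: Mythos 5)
Your argument is correct and is essentially the paper's own proof, which simply says to repeat the proof of Proposition~\ref{prop:annoying} using the Zariski-density of the points $\tX(N)$ in $\tPi_\mcm$ (from Theorem~\ref{thm:postnikov}). Your closing observation that $\Imm_\mcm(\varphi)$ is the image of $\Imm(\varphi)$ under the quotient map is also exactly the remark the paper makes immediately after its proof, namely $\Imm_\mcm = \pi_\mcm \circ \Imm$.
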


\begin{proof}[Proof of Proposition~\ref{prop:annoying2}] 
The proof is the same as the proof of Proposition~\ref{prop:annoying} after noting that the set of points of the form $\tX(N)$ is Zariski dense in $\tPi_\mcm$. We get that $\Imm_\mcm(\varphi)$ is a regular function on $\tPi_\mcm$, and that thus the immanant map $\Imm_\mcm$ is well-defined.
\end{proof}

Let us note that the immanant map is characterized as the map that (up to sign) sends $\eval(E_\mcs)$ to the function $\Delta_{I_1} \cdots \Delta_{I_r}$ where $\mcs$ and $\mci=(I_1,\dots I_r)$ are dual. Therefore $\Imm_\mcm =  \pi_\mcm \circ \Imm$, where $\pi_\mcm: \bbc[\Gr(k,n)]_\lam \surjects \bbc[\Pi_\mcm]_\lam$ denotes the quotient map.

\subsection{Partial evaluation of webs}
Let $\mfi(\mcm)_\lam$ denote the degree $\lam$ part of the positroid ideal $\mfi(\mcm)$.  We now give a description of the subspace 
$$
(\mfi(\mcm)_\lam)^\perp:= \{ x \in W_\lam(U) :  \la x, f \ra = 0 \text{ for all } f \in \mfi(\mcm)_\lam\} \subseteq W_\lam(U)
$$
where $\la \cdot, \cdot \ra$ denotes the pairing between $W_\lam(U)$ and $\bbc[\Gr(k,n)]_\lam$.  Thus the inclusion $(\mfi(\mcm)_\lam)^\perp \hookrightarrow W_\lam(U)$ is dual to the surjection $\pi_\mcm: \bbc[\Gr(k,n)]_\lam \surjects \bbc[\Pi_\mcm]_\lam$.

We now define the notion of the partial evaluation of a web.
Let $\hat{W} \in W_\lam(U)$ be an $\SL_r$-web, where $\lam_1+ \dots +\lam_n = kr$. The copies of $\bigwedge^{\lam_1}(U),\dots,\bigwedge^{\lam_n}(U)$ gives us $kr$ locations to plug in vectors $v_1,\dots,v_{kr}$. For example, the first $\lam_1$ vectors are plugged into boundary vertex $1$ as the wedge $v_1 \wedge \cdots \wedge v_{\lam_1}$, and so on. 

\begin{defn}\label{defn:partialeval}
Let $I = \{i_1,\dots,i_k\} \in \binom {[n]} k$. Let $U' \subset U$ be the subspace spanned by $E_1,\dots,E_{r-1}$. Then the \emph{partial evaluation map along $I$} 
\begin{align}
\mcw_\lam(U) &\to \mcw_{\mu}(U') \label{eq:partialevaluation}\\
x &\mapsto x \big|_{I \mapsto E_r} \label{eq:partialevaluationII}
\end{align}
is the linear map obtained by specializing the input vectors $v_{i_1},v_{i_2},\ldots,v_{i_k}$ to the last basis vector $E_r$, and then restricting the remaining vectors to lie in the subspace $U'$. The resulting function is $\SL_{r-1}$-invariant. We have $\mu_i = \lambda_i-1$ or $\mu_i = \lambda_i$ depending on whether $i \in I$ or $i \notin I$.  It satisfies $\mu_1+\dots+\mu_n = (r-1)k$. 
In the multilinear case $\lam = (1,\dots,1)$, partial evaluation along $I$ is a linear map $\mcw(r,n) \mapsto \mcw(r-1,n-k)$. 
\end{defn}

We can now make the key definition of this section. For a positroid $\mcm$, we denote by 
\begin{equation}\label{eq:dualspace}
\mcw_\lam(U)(\mcm) = \{x \in \mcw_\lam(U) \colon \, x \big|_{I \mapsto E_r} = 0 \text{ for all } I \notin \mcm \}.
\end{equation}
We get a collection of subspaces $\mcw_\lam(U)(\mcm)$ of the $\SL_r$-tensor invariant space indexed by positroids $\mcm$ for the Grassmannian $\Gr(k,n)$. 

If $W$ is an $r$-weblike subgraph, its partial evaluation along $I$ is zero if and only if $W$ has no consistent labelings in which there are $r$'s at the boundary edges indicated by $I$, cf.~Example~\ref{eg:37and5}. We note that the partial evaluation of a web $W$ will often decompose as a \emph{sum} of several $\SL_{r-1}$-web invariants. 

\begin{thm}\label{thm:positroidpairing}
Let $\mcm$ be a positroid with positroid variety $\Pi_\mcm$. Then we have $\mcw_\lam(U)(\mcm) = (\mfi(\mcm)_\lam)^\perp$.  In particular, these two spaces have the same dimension.
\end{thm}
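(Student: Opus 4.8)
The plan is to derive both inclusions from a single identity: for $x \in \mcw_\lam(U)$ and $I \in \binom{[n]}{k}$, the partial evaluation $x\big|_{I \mapsto E_r} \in \mcw_\mu(U')$ vanishes if and only if $\langle x, \Delta_{I_1} \cdots \Delta_{I_{r-1}} \Delta_I \rangle = 0$ for every list $(I_1, \dots, I_{r-1})$ of $k$-subsets of $[n]$ with $I_1 \cup \cdots \cup I_{r-1} \cup I = \{1^{\lam_1}, \dots, n^{\lam_n}\}$ as a multiset. Granting this, the theorem is immediate. If $x \in (\mfi(\mcm)_\lam)^\perp$ and $I \notin \mcm$, then each such product $\Delta_{I_1}\cdots\Delta_{I_{r-1}}\Delta_I$ lies in $\mfi(\mcm)_\lam$, so $x$ pairs trivially with all of them, whence $x\big|_{I\mapsto E_r}=0$; thus $(\mfi(\mcm)_\lam)^\perp \subseteq \mcw_\lam(U)(\mcm)$. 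Conversely, expanding any element of the ideal into Pl\"ucker monomials shows that $\mfi(\mcm)_\lam$ is spanned by products $\Delta_{J_1}\cdots\Delta_{J_r}$ of the correct $\bbz^n$-degree $\lam$ in which some factor satisfies $J_i \notin \mcm$; for such a product the identity gives $\langle x, \Delta_{J_1}\cdots\Delta_{J_r}\rangle = 0$ whenever $x\big|_{J_i\mapsto E_r}=0$, so $\mcw_\lam(U)(\mcm) \subseteq (\mfi(\mcm)_\lam)^\perp$. The equality of dimensions is then automatic, and combined with Theorem~\ref{thm:immanants} and the quotient $\bbc[\Gr(k,n)]_\lam \surjects \bbc[\Pi_\mcm]_\lam$ with kernel $\mfi(\mcm)_\lam$, it recovers $\dim \bbc[\Pi_\mcm]_\lam = \dim \mcw_\lam(U)(\mcm)$.

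To prove the identity I would argue in three steps. First, the evaluation functionals $\eval(E_{\mcs'})$ span $\mcw_\mu(U')^*$ as $\mcs'$ ranges over lists of boundary label subsets for $\mu$ in rank $r-1$: the $E_{\mcs'}$ span $\bigotimes_i \bigwedge^{\mu_i}(U')$, and the weight of the diagonal torus of $\SL(U')$ forces the non-balanced lists to pair trivially with every invariant, so only the balanced $\mcs'$ matter and these are precisely the duals of location-subset lists. Hence $x\big|_{I\mapsto E_r}=0$ iff $\eval(E_{\mcs'})\big(x\big|_{I\mapsto E_r}\big)=0$ for all such $\mcs'$. Second, unwinding Definition~\ref{defn:partialeval}, one checks that $\eval(E_{\mcs'}) \circ (\,\cdot\,\big|_{I\mapsto E_r})$ equals $\pm\,\eval(E_\mcs)$, where $\mcs$ is obtained from $\mcs'$ by inserting the color $r$ into the subsets indexed by $I$; since $|I|=k$, the list $\mcs$ again satisfies \eqref{eq:SMultiset}, the assignment $\mcs' \leftrightarrow \mcs$ is a bijection, and the sign is a fixed nonzero scalar that plays no role in vanishing. (This matches the remark following Definition~\ref{defn:partialeval}: the partial evaluation of $\textbf{W}$ along $I$ vanishes precisely when $W$ admits no consistent labeling carrying the color $r$ on exactly the boundary edges indexed by $I$.) Third, by the explicit pairing~\eqref{eq:pairingdumbeddownII}, $\eval(E_\mcs)(x) = \sgn(\mcs)\,\langle x,\Delta_{I_1}\cdots\Delta_{I_r}\rangle$ with $(I_1,\dots,I_r)$ the boundary location subsets dual to $\mcs$; by construction $I_r = I$, and as $\mcs'$ varies the tuple $(I_1,\dots,I_{r-1})$ runs over exactly the completions in the statement. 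Chaining the three steps gives the identity.

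The main obstacle is the second step: verifying that ``specialize the inputs at $I$ to $E_r$, then restrict the remaining inputs to $U' = \spann(E_1,\dots,E_{r-1})$'' agrees, up to an irrelevant overall sign, with evaluation on the extended basis tensor $E_\mcs$. This is pure sign bookkeeping — one must track how $E_{S'_j \cup \{r\}} = E_{S'_j} \wedge E_r$ interacts with the shuffle/wedge signs of~\eqref{eq:shufflesigns} and with the normalizations defining $\sgn(\mcs)$ versus $\sgn(\mcs')$. I expect it to come down to the observation that appending a globally largest symbol to a fixed set of $k$ of the $n$ boundary slots changes all the relevant wedge-product signs by one and the same factor, so that $(\,\cdot\,\big|_{I\mapsto E_r})^*$ is literally a reindexing (up to a global scalar) of the evaluation functionals; once this is in hand the ``iff'' is clean and the rest of the proof is formal, using only spanning of the evaluation functionals, the elementary spanning of an ideal by monomials, and the pairing formula already recorded in the paper.
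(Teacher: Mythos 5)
Your proposal is correct and follows essentially the same route as the paper: identify $\mfi(\mcm)_\lam$ as spanned by Pl\"ucker products with some factor outside $\mcm$, translate pairing with such products into evaluation at $E_\mcs$ via \eqref{eq:pairingdumbeddownII}, and observe that vanishing for all completions $(I_1,\dots,I_{r-1})$ of a fixed $I \notin \mcm$ is exactly vanishing of the partial evaluation along $I$. The paper asserts this last equivalence in one line; your three-step argument (spanning of the $\eval(E_{\mcs'})$, the reindexing $\eval(E_{\mcs'})\circ(\,\cdot\,|_{I\mapsto E_r}) = \pm\,\eval(E_\mcs)$, and the pairing formula) is a correct and welcome elaboration of that step.
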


\begin{proof}
$\mfi(\mcm)_\lam$ is spanned by the $r$-fold products of Pl\"ucker coordinates $\Delta_{I_1} \cdots \Delta_{I_r}$ satisfying \eqref{eq:IMultiset} such that at least one of the $I_i$ satisfies $I_i \notin \mcm$. By reordering indices, let us assume that $I_r \notin \mcm$.

Thus, the dual space $(\mfi(\mcm)_\lam)^\perp$ is the subspace of tensor invariants $x \in \mcw_\lam(U)$ that pair to zero with each of these $r$-fold products. From \eqref{eq:PluckerstoWebr}, pairing to $0$ with $(I_1,\dots,I_r)$ means that $x \big|_{E_\mcs} = 0$ where $\mcs$ is dual to $(I_1,\dots,I_r)$. If we fix $I_r$, this holds for all choices of $I_1,\dots,I_{r-1}$ if and only if the partial evaluation $x \big|_{I_r \mapsto E_r} = 0 \in \mcw_\mu(U')$. By varying $I_r$ over all subsets not in $\mcm$, we see that the dual space is exactly \eqref{eq:dualspace} as claimed.  
\end{proof}

\begin{thm}\label{thm:positroidspanning}
Let $G$ be a planar bipartite graph with positroid $\mcm$.  Then the subspace $\mcw_\lam(U)(\mcm)$ is spanned by either of the following sets: 
\begin{itemize}
\item the elements $\Web_r(N;\lam)$, as $N$ varies over the (infinitely many) networks whose underlying graph is $G$;
\item the elements ${\bf W}$, as $W$ varies over the (finitely many) $r$-weblike subgraphs of $G$ with degree $\lam$.  
\end{itemize}  
\end{thm}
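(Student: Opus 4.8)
The plan is to combine the two characterizations of $\mcw_\lam(U)(\mcm)$ already available to us: the external description from Theorem~\ref{thm:positroidpairing}, namely $\mcw_\lam(U)(\mcm) = (\mfi(\mcm)_\lam)^\perp$, together with the spanning argument for $\mcf\mcw_\lam(r)$ carried out inside the proof of Theorem~\ref{thm:kernel}. First I would dispose of the second spanning set in terms of the first: since $\Web_r(N;\lam) = \sum_{W \subset G,\ \lam(W)=\lam} \wt(W)\,\textbf{W}$ is, for each $N$, a linear combination of the finitely many $\textbf{W}$ with $W$ an $r$-weblike subgraph of $G$, the span of the $\Web_r(N;\lam)$ is contained in the span of the $\textbf{W}$. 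Conversely, the Vandermonde-type argument from the proof of Theorem~\ref{thm:kernel} (specializing the edge weights of $G$ so that the matrix $(\wt_{N_i}(W_j))$ is invertible) shows each $\textbf{W}$ is in turn a linear combination of the $\Web_r(N;\lam)$. Hence the two sets have the same span, and it suffices to show that the span of $\{\textbf{W} : W \subset G,\ \lam(W)=\lam\}$ equals $\mcw_\lam(U)(\mcm)$.

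Next I would prove the inclusion $\spann\{\textbf{W}\} \subseteq \mcw_\lam(U)(\mcm)$. By \eqref{eq:dualspace} this means: for every $r$-weblike subgraph $W$ of $G$ and every $I \notin \mcm$, the partial evaluation $\textbf{W}\big|_{I \mapsto E_r}$ vanishes. Using the characterization recalled in the paragraph after Definition~\ref{defn:partialeval}, $\textbf{W}\big|_{I \mapsto E_r} = 0$ precisely when $W$ has no consistent labeling with $r$'s on the boundary edges indexed by $I$. Suppose such a consistent labeling $\ell$ existed. Its associated boundary location subsets $\mci = (I_1,\dots,I_r)$ would have $I_r = \{j : r \in S(b_j)\} \supseteq I$, and in fact, choosing the colors appropriately, $I_r = I$ (we may relabel colors so that the color-$r$ locations are exactly $I$ — this uses that $|I|=k$, which holds since any consistent labeling forces each $|I_i|=k$). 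But the color-$r$ strands of $\ell$ restrict to a collection of $r$-weblike data realizing a single dimer configuration of $G$ with boundary $I$; since $G$ has positroid $\mcm$, every boundary measurement $\Delta_I$ with $I \notin \mcm$ is identically zero on $G$, i.e. $G$ has no dimer configuration with boundary $I$ — a contradiction. Hence no such $\ell$ exists and $\textbf{W}\big|_{I \mapsto E_r}=0$.

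For the reverse inclusion $\mcw_\lam(U)(\mcm) \supseteq$ is automatic; what remains is $\mcw_\lam(U)(\mcm) \subseteq \spann\{\textbf{W}\}$, which is the heart of the matter and the step I expect to be the main obstacle. Here I would argue by duality. Let $K_G \subseteq \mcf\mcw_\lam(r)$ be the span of the $r$-weblike subgraphs of $G$, and let $\rho_G : \mcf\mcw_\lam(r)_{\mid G} \to \mcw_\lam(U)$ denote the evaluation map $W \mapsto \textbf{W}$ restricted to this finite-dimensional space; we want $\Im(\rho_G) = \mcw_\lam(U)(\mcm) = (\mfi(\mcm)_\lam)^\perp$. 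Dually it suffices to show that a functional $\varphi \in \mcw_\lam(U)^*$ kills $\Im(\rho_G)$ if and only if $\varphi$ kills $(\mfi(\mcm)_\lam)^\perp$, i.e. (since $\bbc[\Gr(k,n)]_\lam$ is naturally $\mcw_\lam(U)^*$ via $\Imm$, and $(\mfi(\mcm)_\lam)^{\perp\perp} = \mfi(\mcm)_\lam$) if and only if $\Imm(\varphi) \in \mfi(\mcm)_\lam$. Now $\varphi$ kills every $\textbf{W}$ for $W \subset G$ iff $\varphi(\Web_r(N;\lam)) = 0$ for all networks $N$ on $G$ (by the Vandermonde equivalence above), iff $\Imm(\varphi)$ vanishes on the Zariski-dense subset $\{\tX(N)\} \subseteq \tPi_\mcm$, iff $\Imm(\varphi)$ vanishes on $\tPi_\mcm$, iff $\Imm(\varphi) \in \mfi(\mcm)_\lam$ (using that $\mfi(\mcm)$ is the full defining ideal of $\Pi_\mcm$, from \cite{KLS}). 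This chain of equivalences gives exactly the needed duality, completing the proof. The delicate point to get right is the third bullet — that vanishing on the image of the parametrization forces membership in $\mfi(\mcm)_\lam$ rather than merely in the radical — which is precisely where projective normality/primeness of $\mfi(\mcm)$ from \cite{KLS} is invoked; I would flag that dependence explicitly.
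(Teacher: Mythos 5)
Your proposal is correct and, in its essential structure, matches the paper's proof: the hard inclusion $\mcw_\lam(U)(\mcm) \subseteq \spann$ is obtained exactly as in the paper by dualizing, using $\Imm(\varphi)(\tX(N)) = \varphi(\Web_r(N;\lam))$, Zariski density of the points $\tX(N)$ in $\tPi_\mcm$, and the fact from \cite{KLS} that $\mfi(\mcm)$ is the full (prime) defining ideal; and the equality of the two spans is the same Vandermonde specialization from the proof of Theorem~\ref{thm:kernel}. The only place you genuinely deviate is the easy inclusion: the paper checks $\Web_r(N;\lam) \in (\mfi(\mcm)_\lam)^\perp$ in one line from the pairing formula \eqref{eq:pairingdumbeddown}, since $\Delta_{I_r}(N)=0$ for $I_r \notin \mcm$, whereas you verify the stronger pointwise statement that each $\textbf{W}$ with $W \subset G$ lies in $\mcw_\lam(U)(\mcm)$ by observing that a consistent labeling with color $r$ on the boundary set $I$ would extract a dimer configuration of $G$ with boundary $I$, contradicting $I \notin \mcm(G)$. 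That combinatorial argument is valid (the color-$r$ edges of a consistent labeling do form an almost perfect matching, and $I \notin \mcm(G)$ does mean $G$ has no matching with boundary $I$) and is a pleasant direct explanation of why the partial evaluations vanish, but it buys nothing the pairing formula does not already give.
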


This characterization of $\bbc[\Pi_\mcm]_{\lam}$ may be easier to work with than the characterization using promotion and cyclic Demazure crystals given in \cite[Section 12]{LamNotesII}.  

\begin{proof}
First we prove that the various $\Web_r(N;\lam)$ span the subspace $\mcw_\lam(U)(\mcm)$. From \eqref{eq:pairingdumbeddown}, we see immediately that $\Web_r(N;\lam)$ pairs to zero with any $r$-fold product as in the proof of Theorem~\ref{thm:positroidpairing} (since $\Delta_{I_r}(N) = 0$ on $\Pi_\mcm$).  Thus $\Web_r(N;\lam) \in \mcw_\lam(U)(\mcm)$. 

On the other hand, if $f \in \bbc[\Gr(k,n)]_\lam$ pairs to $0$ with every $\Web_r(N;\lam)$, then by \eqref{eq:pairingdumbeddownII}
we have $f(\tX(N)) = 0$ for every network $N$ representing $G$. Since the set of points of the form $\tX(N)$ is Zariski dense in the affine cone over $\Pi_\mcm$, we deduce that $f$ is the zero polynomial, i.e. $f \in \mfi(\mcm)_\lam$. This shows the various $\Web_r(N;\lam)$ span the dual space $(\mfi(\mcm)_\lam)^\perp$. 

Finally, the span of the various $\Web_r(N;\lam)$ coincides with the span of the weblike subgraphs of $G$, as follows from the from the argument inverting the transition matrix \eqref{eq:transitionmatrix} in the proof of Theorem~\ref{thm:kernel}.   
\end{proof}

Thus for a fixed $G$ associated to a positroid $\mcm$, the $r$-weblike subgraphs contained in $G$ are forced to evaluate to $0$ on certain basis tensors. This seems to be a completely new phenomenon. Also surprising is the fact that the elements ${\bf W}$, for $r$-weblike subgraphs of a bipartite graph $G$ representing the top cell, must span the invariant space $\mcw(k,n)$.

The statement that the weblike subgraphs of $G$ span the subspace \eqref{eq:dualspace} also implies some non-obvious compatibilities between webs, the partial evaluation map \eqref{eq:partialevaluation} and positroids. We indicate one such compatibility in the following examples.  


\begin{example}
Let $I_{\text{last}} = \{n-k+1,\dots,n\}$ so that $\Delta_{I_{\text{last}}}$ is the Pl\"ucker coordinate occupying the last $k$ columns. We consider the positroid $\mcm = \{\binom {[n]}{k}\} \backslash I_{\text{last}}$. It is a positroid, because it can be written as an appropriate intersection of cyclically rotated Schubert matroids \cite{OhSchubert}. The ideal $\mfi(\Pi)_{\lam}$ is the set of multiples of $\Delta_{I_{\text{last}}}$ inside $\mfi(\Pi)_{\lam}$, i.e. $\mfi(\Pi)_{\lam} = \Delta_{I_{\text{last}}}\bbc[\Gr(k,n)]_{\mu}$, where $\mu$ is obtained from $\lam$ by decrementing along $I$. 

If $G$ is a bipartite graph with positroid $\mcm$, let $W_1,\dots,W_s$ be a set of $r$-weblike subgraphs of $G$ of degree $\lam$ such that $\textbf{W}_1,\dots,\textbf{W}_s$ is a basis for $\mcw_\lam(U)(\mcm)$. We can extend $\textbf{W}_1,\textbf{W}_2,\dots,\textbf{W}_s$ to a basis $\textbf{W}_1,\dots,\textbf{W}_s,\textbf{W}_{s+1},\dots,\textbf{W}_{s+t}$ of $\mcw_\lam(U)$ consisting entirely of weblike graphs. Then the number $t$ is given by 
\begin{equation}\label{eq:coincidenceofdims}
t = \dim(\mfi(\Pi)_{\lam}) = \dim(\bbc[\Gr(k,n)]_{\mu}) = \dim(W_\mu(U')),
\end{equation}
where the last equality follows from duality. On the other hand, since $\spann(\textbf{W}_{s+1},\dots,\textbf{W}_{s+t}) \cap \mcw_\lam(U)(\mcm) = \{0\}$, we deduce that the partial evaluations of $\textbf{W}_{s+1},\dots,\textbf{W}_{s+t}$ along $I_{\text{last}}$ are linearly independent in $W_{\mu}(U')$. Thus, these partial evaluations are a basis for $W_\mu(U')$. 
As we have already said, these partial evaluations are not guaranteed to be $\SL_{r-1}$-webs.
\end{example}

\begin{example}\label{eg:37and5} Let us the illustrate the previous example in a particular case. We let $k=r=3$ and $\lam = (1,\dots,1)$, so we are considering the space $\mcw(3,9)$ of $\SL_3$-invariant multilinear functions of 9 vectors $v_1,\dots,v_9$. Its dimension is the number of $3 \times 3$ standard Young tableaux, which is $42$.  The non-elliptic basis webs (up to rotation) are listed in the top row of Figure~\ref{eq:websuptorotation}. We will focus on the positroid $\mcm = \binom{[9]}{3} \backslash \{\Delta_{789}\}$ as in the preceding example. The subspace $\mcw(3,9)(\mcm)$ is the kernel of the linear map $\mcw(3,9) \to \mcw(2,6)$ given by specializing $v_7=v_8=v_9=E_3$.  Concretely, this means placing $3$'s at boundary vertices $7,8$, and $9$, which will force certain interior edges to also be labeled by $3$'s, and then studying consistent labelings of the leftover edges once these $3$'s have been placed.  

A web will be in the subspace~$\mcw(3,9)(\mcm)$ if and only if it either has a fork between vertices $7$ and $8$ or has a fork between vertices $8$ and $9$. By examining the forks in Figure~\ref{eq:websuptorotation}, we see that there are exactly 37 webs that are in $\mcw(3,9)(\mcm)$, and five webs that are not. We have placed $3$'s at the appropriate locations in four of the nonvanishing webs. The fifth nonvanishing web (not pictured) is obtained by reflecting the third web in Figure~\ref{eq:websuptorotation} along the vertical axis (with $3$'s at the same boundary vertices). From Theorem~\ref{thm:positroidspanning}, it follows that $\dim(\bbc[\Pi(\mcm)]_{(1,\dots,1)}) = 37$. 

To see that that $\dim(\bbc[\Pi(\mcm)]_{(1,\dots,1)}) = 37$ directly, let $W \notin \mcw(3,9)(\mcm)$ be one of the five non-vanishing webs. As we said above, the placement of $3$'s at the boundary forces certain interior edges to be $3$'s. The remaining edges in $W$ must be labeled by $1$'s and $2$'s. These remaining edges naturally decompose into a union of three noncrossing paths joining boundary vertices in pairs, as we demonstrate schematically via the downward arrows in Figure~\ref{eq:websuptorotation}. The five $\SL_2$-webs we get in this way are exactly the $5$ crossingless matchings on $6$ vertices, i.e. a set of basis webs for $\mcw(2,6)$. In confirmation of Theorem~\ref{thm:positroidpairing}, we can see concretely that no nontrivial linear combination of the five $\SL_3$-webs combines to give an element of $\mcw(3,9)(\mcm)$ (and thus, $\mcw(3,9)(\mcm)$ is spanned by the $37$ webs that vanish under partial evaluation). 
\end{example} 

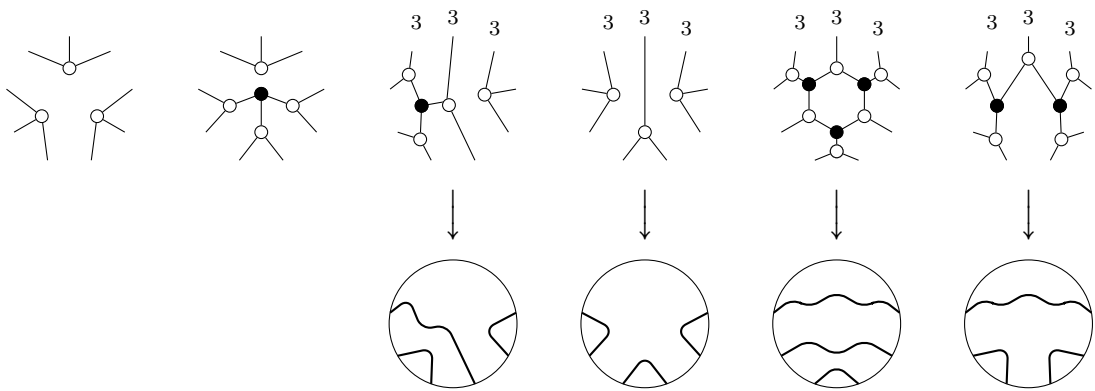
\begin{figure}
\begin{tikzpicture}[scale = .85]
\draw (50:1cm)--(90:.5cm);
\draw (90:1cm)--(90:.5cm);
\draw (130:1cm)--(90:.5cm);
\draw (170:1cm)--(210:.5cm);
\draw (210:1cm)--(210:.5cm);
\draw (250:1cm)--(210:.5cm);
\draw (290:1cm)--(330:.5cm);
\draw (330:1cm)--(330:.5cm);
\draw (370:1cm)--(330:.5cm);

\filldraw[white] (90:.5cm) circle (0.1cm);
\draw (90:.5cm) circle (0.1cm);
\filldraw[white] (210:.5cm) circle (0.1cm);
\draw (210:.5cm) circle (0.1cm);
\filldraw[white] (330:.5cm) circle (0.1cm);
\draw (330:.5cm) circle (0.1cm);
\begin{scope}[xshift = 3cm]
\draw (50:1cm)--(90:.5cm);
\draw (90:1cm)--(90:.5cm);
\draw (130:1cm)--(90:.5cm);
\filldraw[white] (90:.5cm) circle (0.1cm);
\draw (90:.5cm) circle (0.1cm);

\draw (170:1cm)--(190:.5cm);
\draw (210:1cm)--(190:.5cm);
\draw (250:1cm)--(270:.5cm);
\draw (290:1cm)--(270:.5cm);
\draw (330:1cm)--(350:.5cm);
\draw (370:1cm)--(350:.5cm);
\draw (190:.5cm)--(90:.1cm);
\draw (270:.5cm)--(90:.1cm);
\draw (350:.5cm)--(90:.1cm);

\filldraw[white] (190:.5cm) circle (0.1cm);
\draw (190:.5cm) circle (0.1cm);
\filldraw[white] (270:.5cm) circle (0.1cm);
\draw (270:.5cm) circle (0.1cm);
\filldraw[white] (350:.5cm) circle (0.1cm);
\draw (350:.5cm) circle (0.1cm);
\filldraw[black] (90:.1cm) circle (0.1cm);
\draw (90:.1cm) circle (0.1cm);
\end{scope}
\begin{scope}[xshift = 6cm]
\node at (60:1.3cm) {\tiny 3};
\node at (90:1.3cm) {\tiny 3};
\node at (115:1.35cm) {\tiny 3};
\draw (330:1cm)--(10:.5cm);
\draw (10:1cm)--(10:.5cm);
\draw (50:1cm)--(10:.5cm);
\draw (130:1cm)--(150:.8cm);
\draw (170:1cm)--(150:.8cm);
\draw (210:1cm)--(230:.8cm);
\draw (250:1cm)--(230:.8cm);
\draw (290:1cm)--(190:.1cm);
\draw (90:1cm)--(190:.1cm);
\draw (150:.8cm)--(190:.5cm);
\draw (230:.8cm)--(190:.5cm);
\draw (190:.1cm)--(190:.5cm);
\filldraw[white] (150:.8cm) circle (0.1cm);
\draw (150:.8cm) circle (0.1cm);
\filldraw[white] (230:.8cm) circle (0.1cm);
\draw (230:.8cm) circle (0.1cm);
\filldraw[black] (190:.5cm) circle (0.1cm);
\draw (190:.5cm) circle (0.1cm);
\filldraw[white] (230:.1cm) circle (0.1cm);
\draw (230:.1cm) circle (0.1cm);
\filldraw[white] (10:.5cm) circle (0.1cm);
\draw (10:.5cm) circle (0.1cm);
\end{scope}
\begin{scope}[xshift = 9cm]
\node at (60:1.3cm) {\tiny 3};
\node at (90:1.3cm) {\tiny 3};
\node at (115:1.35cm) {\tiny 3};
\draw (50:1cm)--(10:.5cm);
\draw (10:1cm)--(10:.5cm);
\draw (330:1cm)--(10:.5cm);
\draw (130:1cm)--(170:.5cm);
\draw (170:1cm)--(170:.5cm);
\draw (210:1cm)--(170:.5cm);
\draw (90:1cm)--(270:.5cm);
\draw (250:1cm)--(270:.5cm);
\draw (290:1cm)--(270:.5cm);
\filldraw[white] (270:.5cm) circle (0.1cm);
\draw (270:.5cm) circle (0.1cm);
\filldraw[white] (170:.5cm) circle (0.1cm);
\draw (170:.5cm) circle (0.1cm);
\filldraw[white] (370:.5cm) circle (0.1cm);
\draw (370:.5cm) circle (0.1cm);
\end{scope}
\begin{scope}[xshift=12cm]
\node at (60:1.3cm) {\tiny 3};
\node at (90:1.3cm) {\tiny 3};
\node at (115:1.35cm) {\tiny 3};
\draw (130:1cm)--(150:.8cm)--(170:1cm);
\draw (250:1cm)--(270:.8cm)--(290:1cm);
\draw (10:1cm)--(30:.8cm)--(50:1cm);
\draw (30:.5cm)--(90:.5cm)--(150:.5cm)--(210:.5cm)--(270:.5cm)--(330:.5cm)--(30:.5cm);
\draw (90:.5cm)--(90:1cm);
\draw (330:.5cm)--(330:1cm);
\draw (210:.5cm)--(210:1cm);
\draw (150:.5cm)--(150:.8cm);
\draw (30:.5cm)--(30:.8cm);
\draw (270:.5cm)--(270:.8cm);
\filldraw[white] (90:.5cm) circle (0.1cm);
\draw (90:.5cm) circle (0.1cm);
\filldraw[white] (210:.5cm) circle (0.1cm);
\draw (210:.5cm) circle (0.1cm);
\filldraw[white] (330:.5cm) circle (0.1cm);
\draw (330:.5cm) circle (0.1cm);
\filldraw[black] (30:.5cm) circle (0.1cm);
\draw (30:.5cm) circle (0.1cm);
\filldraw[black] (150:.5cm) circle (0.1cm);
\draw (150:.5cm) circle (0.1cm);
\filldraw[black] (270:.5cm) circle (0.1cm);
\draw (270:.8cm) circle (0.1cm);
\filldraw[white] (30:.8cm) circle (0.1cm);
\draw (30:.8cm) circle (0.1cm);
\filldraw[white] (150:.8cm) circle (0.1cm);
\draw (150:.8cm) circle (0.1cm);
\filldraw[white] (270:.8cm) circle (0.1cm);
\draw (270:.8cm) circle (0.1cm);
\end{scope}
\begin{scope}[xshift=15cm]
\node at (60:1.3cm) {\tiny 3};
\node at (90:1.3cm) {\tiny 3};
\node at (115:1.35cm) {\tiny 3};
\draw (130:1cm)--(150:.8cm)--(170:1cm);
\draw (210:1cm)--(230:.8cm)--(250:1cm);
\draw (290:1cm)--(310:.8cm)--(330:1cm);
\draw (10:1cm)--(30:.8cm)--(50:1cm);
\draw (150:.8cm)--(190:.5cm);
\draw (230:.8cm)--(190:.5cm);
\draw (310:.8cm)--(-10:.5cm);
\draw (30:.8cm)--(-0:.5cm);
\draw (90:1cm)--(90:.65cm)--(190:.5cm);
\draw (90:.65cm)--(-10:.5cm);
\filldraw[white] (150:.8cm) circle (0.1cm);
\draw (150:.8cm) circle (0.1cm);
\filldraw[white] (230:.8cm) circle (0.1cm);
\draw (230:.8cm) circle (0.1cm);
\filldraw[white] (30:.8cm) circle (0.1cm);
\draw (30:.8cm) circle (0.1cm);
\filldraw[white] (310:.8cm) circle (0.1cm);
\draw (310:.8cm) circle (0.1cm);
\filldraw[white] (90:.65cm) circle (0.1cm);
\draw (90:.65cm) circle (0.1cm);
\filldraw[black] (190:.5cm) circle (0.1cm);
\draw (190:.5cm) circle (0.1cm);
\filldraw[black] (-10:.5cm) circle (0.1cm);
\draw (-10:.5cm) circle (0.1cm);
\end{scope}
\begin{scope}[yshift = -3.5cm]
\draw [white] (0,0) circle (1cm); 
\node at (6,1.7) {$\big \downarrow$};
\node at (9,1.7) { $\big \downarrow$};
\node at (12,1.7) { $\big \downarrow$};
\node at (15,1.7) { $\big \downarrow$};

\begin{scope}[xshift = 3cm]
\draw [white] (0,0) circle (1cm); 
\end{scope}
\begin{scope}[xshift = 6cm]
\draw (0,0) circle (1cm); 
\draw [thick,rounded corners] (170:1cm)--(150:.8cm)--(190:.5cm)--(190:.1cm)--(290:1cm);
\draw [thick,rounded corners] (210:1cm)--(230:.5cm)--(250:1cm);
\draw [thick,rounded corners] (330:1cm)--(350:.5cm)--(370:1cm);
\end{scope}
\begin{scope}[xshift = 9cm]
\draw (0,0) circle (1cm); 
\draw [thick,rounded corners](10:1cm)--(-10:.5cm)--(330:1cm);
\draw [thick,rounded corners](170:1cm)--(190:.5cm)--(210:1cm);
\draw [thick,rounded corners](250:1cm)--(270:.5cm)--(290:1cm);
\end{scope}
\begin{scope}[xshift=12cm]
\draw (0,0) circle (1cm); 
\draw [thick,rounded corners] (170:1cm)--(150:.8cm)--(150:.5cm)--(90:.5cm)--(30:.5cm)--(30:.8cm)--(10:1cm);
\draw [thick,rounded corners] (210:1cm)--(210:.5cm)--(270:.5cm)--(330:.5cm)--(330:1cm);
\draw [thick,rounded corners](250:1cm)--(270:.65cm)--(290:1cm);
\end{scope}
\begin{scope}[xshift=15cm]
\draw (0,0) circle (1cm); 
\draw [thick,rounded corners] (170:1cm)--(150:.8cm)--(150:.5cm)--(90:.5cm)--(30:.5cm)--(30:.8cm)--(10:1cm);
\draw [thick,rounded corners] (290:1cm)--(310:.5cm)--(330:1cm);
\draw [thick,rounded corners](210:1cm)--(230:.5cm)--(250:1cm);
\end{scope}
\end{scope}
\end{tikzpicture}
\caption{$\SL_2$-webs from $\SL_3$-webs via partial evaluation. \label{eq:websuptorotation}}
\end{figure}

\section{Duality of symmetric group representations}\label{secn:duality}
In this section, we deal only with the multilinear web spaces $\mcw(r,n)$ and $\mcw(k,n)$ where $n=kr$. These spaces are $\bbc[\Gr(r,n)]_{\lam}$ and for $\bbc[\Gr(k,n)]_{\lam}$ for $\lam = (1,\dots,1)$, so we no longer use the symbol $\lam$ to denote the degree of a web. 
Instead $\lam$ will be used to denote a partition of $n$.

Both tensor invariant spaces $\mcw(k,n)$ and $\mcw(r,n)$ carry an action of the symmetric group by permutations of the vectors. Furthermore, these $S_n$-modules are irreducible, and are related to each other by tensoring with the sign representation~$\epsilon$. 
\begin{thm}\label{thm:symmetricgroupduality}
The immanant map $\mcw(r,n)^* \to \mcw(k,n) \otimes \epsilon$ is an isomorphism of $S_n$-modules. 
\end{thm}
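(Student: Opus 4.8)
The plan is to build the $S_n$-equivariant isomorphism out of the immanant map $\Imm\colon \mcw(r,n)^* \to \bbc[\Gr(k,n)]_{(1,\dots,1)} = \mcw(k,n)$ from Theorem~\ref{thm:immanants}, and then to check that this map intertwines the $S_n$-actions up to a sign twist. First I would pin down the $S_n$-action on each side: on $\mcw(r,n)$ (and on $\mcw(k,n)$), a permutation $\sigma$ acts by permuting the tensor slots, i.e. $(\sigma \cdot x)(v_1 \otimes \cdots \otimes v_n) = x(v_{\sigma^{-1}(1)} \otimes \cdots \otimes v_{\sigma^{-1}(n)})$; this induces the contragredient action on $\mcw(r,n)^*$. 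On $\bbc[\Gr(k,n)]_{(1,\dots,1)}$, viewed as the span of products $\Delta_{I_1}\cdots\Delta_{I_k}$ with $I_1 \sqcup \cdots \sqcup I_k = [n]$ (note the roles of $r$ and $k$ are swapped relative to the boundary-label picture — here we have $k$ Pl\"ucker coordinates partitioning $[n]$), $\sigma$ acts by relabeling columns.

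The key computation is to trace how the immanant map behaves on the evaluation functionals $\eval(E_\mcs)$, which span $\mcw(r,n)^*$. By the characterization recorded just after Definition~\ref{defn:immanants} and in \eqref{eq:pairingdumbeddownII}, $\Imm(\eval(E_\mcs)) = \sgn(\mcs)\,\Delta_{I_1}\cdots\Delta_{I_r}$ where $\mci = (I_1,\dots,I_r)$ is the boundary-location data dual to $\mcs$. Now apply $\sigma \in S_n$: permuting the boundary slots sends $\mcs = (S_1,\dots,S_n)$ to $\sigma\cdot\mcs = (S_{\sigma^{-1}(1)},\dots,S_{\sigma^{-1}(n)})$, which translates on the dual side to replacing each $I_i$ by $\sigma(I_i)$. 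So $\Imm$ sends $\eval(E_{\sigma\cdot\mcs}) \mapsto \sgn(\sigma\cdot\mcs)\,\Delta_{\sigma(I_1)}\cdots\Delta_{\sigma(I_r)} = \sgn(\sigma\cdot\mcs)\,\sigma\cdot(\Delta_{I_1}\cdots\Delta_{I_r})$. Comparing with $\sigma\cdot\Imm(\eval(E_\mcs)) = \sgn(\mcs)\,\sigma\cdot(\Delta_{I_1}\cdots\Delta_{I_r})$, the equivariance defect is exactly the ratio $\sgn(\sigma\cdot\mcs)/\sgn(\mcs)$. The point is to identify this ratio: $\sgn(\mcs)$ is the sign of the permutation straightening the word $w(\mcs)$ formed by reading the subscript-tagged entries of $E_{S_1}\otimes\cdots\otimes E_{S_n}$ left to right, and a standard bookkeeping argument (keeping track of how the block of $\lam_i$ letters at position $i$ moves when slot $i$ moves) shows $\sgn(\sigma\cdot\mcs) = \sgn(\sigma)^{?}\cdot\sgn(\mcs)$ — in the multilinear case $\lam=(1,\dots,1)$ each block is a single letter, so moving slots is literally permuting the letters of $w(\mcs)$ by $\sigma$, giving $\sgn(\sigma\cdot\mcs) = \sgn(\sigma)\sgn(\mcs)$. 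Hence $\Imm$ intertwines the $S_n$-action on $\mcw(r,n)^*$ with the action on $\mcw(k,n)$ twisted by the character $\sigma \mapsto \sgn(\sigma)$, i.e. $\Imm\colon \mcw(r,n)^* \to \mcw(k,n)\otimes\epsilon$ is $S_n$-equivariant. Since $\Imm$ is already known to be a linear isomorphism, we are done; and the remark that both modules are irreducible (which may be cited from the representation-theoretic discussion promised in Section~\ref{secn:duality}, as the relevant invariant spaces are Specht modules $S^{(k^r)}$ and $S^{(r^k)}$ up to sign twist) then makes the statement of the theorem meaningful.

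I expect the main obstacle to be the clean verification of the sign identity $\sgn(\sigma\cdot\mcs) = \sgn(\sigma)\,\sgn(\mcs)$, and more precisely making sure the conventions for $\sgn(\mcs)$, for the $S_n$-action on $\bbc[\Gr(k,n)]$ (columns versus vectors, left versus right action), and for the duality between $\mcs$ and $\mci$ are all aligned so that no stray sign or inverse creeps in; this is exactly the kind of place where the "$\otimes\epsilon$" could silently turn into "$\otimes\epsilon^{\otimes 2} = $ trivial" if a convention is off. A secondary point is to confirm that the evaluation functionals $\eval(E_\mcs)$ really do span $\mcw(r,n)^*$ (already used in the proof of Proposition~\ref{prop:annoying}) and that $\sigma$ permutes these functionals as claimed, so that checking equivariance on this spanning set suffices. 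Everything else — bijectivity, the identification of the graded piece with $\mcw(k,n)$ — is inherited from Theorems~\ref{thm:welldefined} and~\ref{thm:immanants}.
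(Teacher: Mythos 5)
Your strategy is genuinely different from the paper's, and it is a reasonable one: you check equivariance-up-to-sign directly on the spanning set of evaluation functionals $\eval(E_\mcs)$, using the characterization $\Imm(\eval(E_\mcs)) = \sgn(\mcs)\,\Delta_{I_1}\cdots\Delta_{I_r}$. The paper instead never touches the action of a general $\sigma$ explicitly: it realizes $\mcw(k,n)$ as the Specht module $S_{\lam^t}$ via $\Delta_{I_1}\cdots\Delta_{I_r}\mapsto \textnormal{poly}(T^t)$, fits $\Imm$ as the diagonal arrow in the commutative diagram $N_\lam \to (M_\lam)^*\otimes\eps \to (S_\lam)^*\otimes\eps \to S_{\lam^t}$ whose other arrows are manifestly $S_n$-equivariant, and deduces equivariance of $\Imm$ from the surjectivity of the horizontal composite. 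That route buys you a clean place to hide all the signs (they are absorbed into the standard pairing $M_\lam\otimes N_\lam\to\eps$ and the polytabloid construction); your route is more elementary but forces you to do the sign bookkeeping by hand, which is exactly where your writeup goes wrong.

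The concrete problem is the identity you isolate as the crux: $\sgn(\sigma\cdot\mcs) = \sgn(\sigma)\,\sgn(\mcs)$ is \emph{false} when the word $w'(\mcs)$ has repeated letters, which it always does for $k>1$. Take two adjacent slots $j,j+1$ carrying the same color $i$ and let $\sigma=(j\ j+1)$: then $\sigma\cdot\mcs=\mcs$, so the left side equals $\sgn(\mcs)$, while the right side is $-\sgn(\mcs)$. (Permuting the positions of a multiset word does not multiply its inversion sign by $\sgn(\sigma)$; with the paper's convention of re-standardizing subscripts left-to-right within each color, the correct statement is $\sgn(\sigma\cdot\mcs) = \sgn(\sigma)\,\sgn(\mcs)\,\prod_i \sgn(\sigma|_{I_i})$, where $\sgn(\sigma|_{I_i})$ is the sign of the permutation sorting the sequence $\sigma(I_i)$.) Your proof nevertheless lands on the right answer because you commit a second, compensating error in the same line: $\Delta_{\sigma(I_1)}\cdots\Delta_{\sigma(I_r)}$ is \emph{not} equal to $\sigma\cdot(\Delta_{I_1}\cdots\Delta_{I_r})$; relabeling columns reorders the columns inside each determinant, producing exactly the factor $\prod_i\sgn(\sigma|_{I_i})$ (in the example above, $\sigma$ swaps two columns of $\Delta_{I_i}$ and so acts by $-1$, not $+1$). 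The two missing factors are identical and cancel, so the equivariance defect really is $\sgn(\sigma)$ and the theorem follows; but as written both intermediate claims are false, and you should either insert the factor $\prod_i\sgn(\sigma|_{I_i})$ on both sides and verify the corrected identity, or reduce to adjacent transpositions $(j\ j+1)$, where the two cases (equal colors, unequal colors) can each be checked in one line. The remaining ingredients you invoke (that the $\eval(E_\mcs)$ span $\mcw(r,n)^*$, that $\sigma$ permutes them as claimed up to the inverse convention, and that $\Imm$ is a linear isomorphism by Theorem~\ref{thm:immanants}) are all fine.
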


Since the $S_n$-modules in Theorem~\ref{thm:symmetricgroupduality} are irreducible, the map in Theorem~\ref{thm:symmetricgroupduality} 
is unique up to a scalar factor. The $S_n$-equivariance of the immanant map does not seem obvious, since there is not a natural action of $S_n$ on the space of networks. 

\medskip

Before proving Theorem \ref{thm:symmetricgroupduality} we recall the relevant notions from $S_n$ representation theory.  

Let $\lam = (\lam_1,\dots,\lam_r) \vdash n$ be a Young diagram, with $\lam_1$ the number of boxes in the first row, $\lam_2$ the number of boxes in the second row, and so on. Let $S_\lam = S_{\lam_1} \times \cdots \times S_{\lam_r} \subset S_n$ be the corresponding Young subgroup. A \emph{tableau} for a shape $\lam \vdash n$ is a filling of $\lam$ by the numbers $1,\dots,n$. A \emph{tabloid} is an equivalence class of tableaux, where we identify two tableaux if their entries differ by a permutation in each row. The free vector space $M_\lam$ on the set of tabloids of shape $\lam$ is a right $S_n$-module. It is the induced representation  $\Ind \mathds{1}_{S_\lam}^{S_n}$ where $\mathds{1}$ is the trivial representation. We use the notation $[T]$ for the tabloid determined by  a tableau $T$. 

We denote by $N_\lam = M_\lam \otimes \eps = \Ind \eps_{S_\lam}^{S_n}$. It has a basis consisting of \emph{anti-tabloids}, which are defined similarly to tabloids but with the added requirement that swapping two entries in a given row contributes a multiplicative factor of $-1$.  We use the notation $\{T\}$ for the antitabloid determined by $T$. 

Both of the $S_n$-modules $M_\lam$ and $N_\lam$ are reducible. To obtain $S_n$ irreducibles, we recall that the \emph{polytabloid} determined by $T$ is the signed sum of tabloids
\begin{equation}\label{eq:polytabloid}
\textnormal{poly}(T) = \sum_{\sigma \in \textnormal{col}(T)} \sgn(\sigma)[T \cdot \sigma] \in M_\lam,
\end{equation}
where $\textnormal{col}(T)$ is the subgroup of $S_n$ consisting of permutations that fix the entries in each column of $T$. 
Then the \emph{Specht Module} $S_\lam$ is the subspace of $M_\lam$ spanned 
by the polytabloids $T$, as $T$ varies over all fillings of $\lam$. The various Specht modules $S_\lam$, for $\lam \vdash n$, are exactly the irreducibles for $S_n$. 

For a Young diagram $\lam$, we let $\lam^t$ denote the conjugate or transpose partition (likewise $T^t$ denotes the conjugate tableau to $T$). Then the Specht modules $S_\lam$ and $S_{\lam^t}$ are related by $S_\lam = S_{\lam^t} \otimes \eps$.

\medskip

For the space $\mcw(k,n)$, a list of boundary location subsets $(I_1,\dots,I_r)$ is the same as an $r\times k$ tableau $T(I_1,\dots,I_r)$ whose rows are $I_1,\dots,I_r$. The boundary label subsets $\mcs = (S_1,\dots,S_n)$ are singletons such that $S_j$ records the row of $T(I_1,\dots,I_r)$ in which $j$ appears. The sign of $\mcs$, which we can denote by $\sgn(T)$, is determined by the well-known \emph{descent number} of $T$ modulo $2$, where we recall that the number of descents of a tableaux is the number of pairs $i>j$ but $i$ is in a lower row of $T$ than $j$.  

Letting $\lam$ denote the $r \times k$ rectangle, there is an isomorphism $S_n$-modules
\begin{align}
\mcw(k,n) & \leftrightarrow S_{\lam^t} \\
\Delta_{I_1}\Delta_{I_2} \cdots \Delta_{I_r} &\mapsto \textnormal{poly}(T^t) \label{eq:specialcase}  
\end{align}
where $T = T(I_1,\dots,I_r)$. Indeed, consider a $k \times n$ matrix of indeterminates $x_{ij}$.  The homogeneous coordinate ring $\bbc[\Gr(k,n)]$ can be identified with the invariant subring $\bbc[x_{ij}]^{\SL_k} \subset \bbc[x_{ij}]$. 
For a set $J=\{j_1, \dots, j_r\} \in \binom {[n]}{r}$, denote by $x_{iJ}$ the monomial $x_{ij_1}x_{ij_2} \dots x_{ij_r}$. For a tableau $T = T(I_1,\dots,I_r)$ whose columns are $J_1,\dots,J_k$, we can consider the monomial $$\prod_{i=1}^k x_{iJ_i}.$$
Clearly this monomial only depends on $[T^{t}]$. The expansion of $\Delta_{I_1} \cdots \Delta_{I_r}$ as a signed sum of such monomials agrees with the expression of \eqref{eq:polytabloid} of poly$(T^t)$ as a signed sum of tabloids. Thus the subspace spanned by $\textnormal{poly}(T^t)$, which is the Specht module $S_{\lam^t}$, is isomorphic to $\mcw(k,n)$ as a representation of $S_n$.


\begin{proof}[Proof of Theorem~\ref{thm:symmetricgroupduality}]
The idea is to fit the immanant map into a commuting diagram of maps of $S_n$-modules, from which we deduce that the immanant map is $S_n$-equivariant. We remark that, all the maps in this diagram do not require $\lam$ to be of rectangular shape, and are unique up to scalars. The only assertion that requires $\lam$ to be of rectangular shape is the interpretation of one of these maps as the immanant map.

First, it easy to check that the pairing 
\begin{align*}
M_\lam \otimes N_\lam &\to \eps \\
[S] \otimes \{T\} &\mapsto \delta_{S,T} \,  \sgn(S).
\end{align*}
is $S_n$-equivariant, where $\delta_{S,T}$ is the Kronecker delta. In this way, we get an $S_n$-equivariant map $N_\lam \to M_\lam^* \otimes \eps$. Since $S_\lam \subset M_\lam$, we can further compose 
$N_\lam \to M_\lam^* \otimes \eps \to S_\lam^* \otimes \eps$.

On the other hand, there is always an $S_n$-equivariant map $N_\lam \to S_{\lam^t}$ induced by $\{T\} \mapsto \textnormal{poly}(T^t)$. The map \eqref{eq:specialcase} can be thought of as a special case of this. (The space of formal sums of $\Delta_{I_1}\cdots \Delta_{I_r}$ -- before imposing Pl\"ucker relations -- is naturally identified with $N_\lam$).  Then the immanant map $(S_\lam)^* \otimes \epsilon \to S_{\lam^t}$ is the unique (up to scalars) map making the diagram 
\begin{equation}\label{eq:commutativediagram}
 \xymatrix{ N_\lam \ar[d] \ar[r] & (M_\lam)^* \otimes \eps \ar[r] & (S_\lam)^* \otimes \eps  \ar[dll]^{\Imm} \\
S_{\lam^t} &   & 
}
\end{equation}
Let us explain why this diagonal map is the immanant map. Going down in \eqref{eq:commutativediagram} is the map that replaces a formal sum of products $\Delta_{I_1} \cdots \Delta_{I_r}$ with their images in $\bbc[\Gr(k,n)]$ (i.e., going down is imposing the Pl\"ucker relations).

Let us now examine the top row of the commutative diagram. The space $M_\lam$ can be identified with monomials in $y_{ij}$ which are entries in an $r \times n$ matrix, and which are multilinear in each column, by the analysis above. The map $N_\lam \to M_\lam^* \otimes \eps$ takes an antitabloid $\{T\}$ to $\sign(T)$ times the functional that picks out the coefficient of the tabloid $[T]$. Putting this together, the antitabloid $\{T\}$ gets sent to $\sign(T)$ times the coefficient of $\prod_{i} y_{iI_i}.$ When we restrict this to the Specht module, we get exactly the functional $\sgn(\mcs)\text{eval}(E_\mcs)$, where  eval$(E_\mcs)$ is as in the proof of Proposition~\ref{prop:annoying} . The commutativity now follows since the immanant map is characterized by~\eqref{eq:PluckerstoWebr}.  
\end{proof}

\section*{Appendix: web duality pictures}
This appendix is jointly written with Darlayne Addabbo, Eric Bucher, Sam Clearman, Laura Escobar, Ningning Ma, Suho Oh, and Hannah Vogel. For simplicity, we restrict attention to the multilinear case $\lam = (1,\dots,1)$ and $n = kr$. The immanant map provides us with an isomorphism $\Imm \colon \mcw(r,n)^* \to \mcw(k,n)$. When $r$ is equal to $2$ or $3$, the space $\mcw(r,n)$ has a distinguished choice of \emph{web basis} $\mcb$ (given by crossingless matchings, and non-elliptic $\SL_3$ webs, respetively). For a given basis web $W$, we denote by $\varphi_W \in \mcw(r,n)^*$ the dual basis element with respect to $\mcb$. Its image $\Imm(\varphi_W) \in \mcw(k,n)$ is called a \emph{web immanant} in \cite{LamDimers}. 

Our main observation in this appendix is the following: 
\begin{observation}
Let $r=2$ or $3$, and $W \in \mcw(r,n)$ be a basis web. Then the web immanants $\Imm(\varphi_W) \in \mcw(k,n)$ are also web invariants (up to a sign). 
\end{observation}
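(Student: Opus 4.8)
The plan is to reduce the Observation to a concrete combinatorial identity for the web immanant pairing \eqref{eq:pairingdumbeddownII}, and then to exploit the small-rank structure of $\mcw(2,n)$ and $\mcw(3,n)$ where $\mcw(k,n)$ also has a web basis. First I would recall that for $r=2$ the space $\mcw(2,n)$ has the crossingless-matching basis $\mcb$, and for $r=3$ it has the non-elliptic web basis; in both cases the space $\mcw(k,n)=\mcw(n/r, n)$ is a web space in its own right, so the statement ``$\Imm(\varphi_W)$ is a web invariant up to sign'' is meaningful. By Theorem~\ref{thm:immanants} and the explicit description \eqref{eq:pairingdumbeddownII}, for any basis web $W \in \mcw(r,n)$ and any list of dual boundary data $(\mcs, \mci)$ we have $\la \Imm(\varphi_W), \Delta_{I_1}\cdots\Delta_{I_r}\ra = \varphi_W(\textbf{W}')$ summed appropriately, and more usefully, $\Imm(\varphi_W)$ is determined by its values $\Imm(\varphi_W)(\tX(N)) = \varphi_W(\Web_r(N;\lam))$. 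So the task becomes: show that the functional $\varphi_W$ composed with the $r$-fold boundary measurement is, up to an overall sign, itself the tensor invariant attached to some $k$-weblike configuration.

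The key step is to interpret $\varphi_W(\Web_r(N;\lam))$ directly in terms of the double-dimer ($r=2$) or triple-dimer ($r=3$) model, which is exactly how the $\SL_2$- and $\SL_3$-web immanants were originally defined in \cite{LamDimers}. Concretely, $\Web_r(N;\lam)$ is a weighted sum of $r$-weblike subgraphs $\textbf{W}'$, and by \eqref{eq:pairingdumbeddownII} the coefficient extracted by $\varphi_W$ counts (with signs controlled by Lemma~\ref{lem:eora}) the weblike subgraphs whose skein-reduction yields $W$ in the web basis. For $r=2$ this recovers the Temperley--Lieb immanant, and the output is the sum over all ways of completing the crossingless matching $W$ to a point of $\tGr(k,n)$ by choosing the ``other'' dimer cover; this sum is manifestly the tensor invariant of the $k$-weblike configuration dual (in the sense of the combinatorial design duality around \eqref{eq:SMultiset}--\eqref{eq:IMultiset}) to $W$. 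For $r=3$ one uses Kuperberg's $\SL_3$ skein relations: $\textbf{W}'$ reduces to a $\bbz_{\ge 0}$-linear combination of non-elliptic webs, and pairing against $\varphi_W$ isolates one coefficient; I would then identify this coefficient, as a function on $\tGr(k,n)$, with the $\textbf{W}''$ attached to an explicitly described $k$-weblike graph $W''$ read off from $W$ by the duality, together with a sign coming from $\sgn(\mcs)$ in \eqref{eq:pairingdumbeddownII}. The main structural input is that in these two ranks the non-elliptic web basis is compatible with the dimer model (the content of \cite{LamDimers}), so that $\varphi_W \circ \Web_r$ is "monomial'' in the appropriate sense rather than a genuine linear combination of distinct web invariants.

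The step I expect to be the main obstacle is pinning down the sign: showing that $\varphi_W(\Web_r(N;\lam))$ equals $\pm \textbf{W}''(\tX(N))$ with a \emph{consistent} sign, i.e. that no cancellation or sign inconsistency spoils the identification. This is precisely the kind of sign-coherence statement that Lemma~\ref{lem:eora} is designed to handle, so the argument should be: express both sides via the $a(\mcs;W')$ counting of consistent labelings, observe that passing to the web basis (crossingless matchings, resp. non-elliptic webs) is a unitriangular change of basis preserving these sign patterns, and conclude that the extracted coefficient is, up to the global sign $\sgn(W'',\mco'')$ of \eqref{eq:eoraII}, a nonnegative integer combination that assembles into a single $\textbf{W}''$. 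I would carry this out first for $r=2$, where crossingless matchings are literally the canonical basis and the double-dimer picture is transparent, and then adapt the bookkeeping to $r=3$ using the explicit non-elliptic reduction; the $r=3$ case is where the careful accounting of skein-relation signs against the taggings of Lemma~\ref{lem:eora} will require the most care. Finally, since the Observation only claims equality ``up to a sign,'' it suffices to exhibit \emph{some} weblike configuration $W''$ with $\Imm(\varphi_W) = \pm \textbf{W}''$, which relaxes the burden considerably and lets the duality of \eqref{eq:SMultiset}--\eqref{eq:IMultiset} do most of the combinatorial work.
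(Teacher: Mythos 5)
There is a genuine gap here, and it is worth saying first that the paper itself does not prove this statement: the Observation is supported only by the sentence ``We verified this observation in small cases by straightforward calculations which we omit here,'' together with the explicit dual pairs drawn in Figure~\ref{fig:dualitypics}. So you are attempting a general argument where the paper offers only finite verification, and your sketch does not supply the missing content. The first concrete gap is that you never construct the $k$-weblike graph $W''$ with $\Imm(\varphi_W) = \pm\textbf{W}''$. You appeal to ``the duality of \eqref{eq:SMultiset}--\eqref{eq:IMultiset},'' but that is a duality between a list of boundary label subsets $\mcs$ and a list of boundary location subsets $\mci$ — combinatorial data attached to a single consistent labeling — not a map from $\SL_r$-web diagrams to $\SL_k$-web diagrams. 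Nothing in the paper or in your proposal produces a candidate $W''$ from a crossingless matching or a non-elliptic web; the paper's second Observation (transposing standard Young tableaux under the Khovanov--Kuperberg bijection) would give such a recipe, but it too is only checked in small cases, and the $\mcw(4,8)$ discussion shows the phenomenon is delicate enough that Westbury's basis already fails to be closed under this duality.

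The second gap is that the key assertion — that $\varphi_W(\Web_r(N;\lam))$, i.e.\ the coefficient of the basis web $W$ in the $\mcb$-expansion of $\Web_r(N;\lam)$, equals $\pm\textbf{W}''(\tX(N))$ for a \emph{single} $W''$ rather than being a nontrivial linear combination of web invariants in $\mcw(k,n)$ — is exactly the content of the Observation, and your proposal asserts it (``$\varphi_W\circ\Web_r$ is monomial in the appropriate sense'') rather than proving it. Citing \cite{LamDimers} does not close this: that paper defines the $\SL_2$- and $\SL_3$-web immanants and shows they are well-defined functions on the Grassmannian, but does not show they are single web invariants. Lemma~\ref{lem:eora} is also not the right tool: it controls the sign with which one fixed weblike subgraph evaluates on basis tensors, whereas the difficulty here is not sign-coherence of one web but the identification of a coefficient function on $\tGr(k,n)$ with the invariant of one web. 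A complete argument would have to either exhibit $W''$ explicitly and check $\la \textbf{W}'', \Delta_{I_1}\cdots\Delta_{I_r}\ra$ against the coefficient of $W$ in the skein expansion of $\sum_{W'} a(\mcs;W')\,\textbf{W}'$ for all $(I_1,\dots,I_r)$, or derive the statement from a canonical-basis duality as the paper speculates at the end of the appendix; your sketch supplies neither.
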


That is, the unique $S_n$-equivariant pairing between the tensor invariant spaces $\mcw(k,n)$ and $\mcw(r,n)$ induces a duality between basis webs in these spaces, in small cases. We verified this observation in small cases by straightforward calculations which we omit here (see \cite{LamDimers} for some examples). The resulting pairing is rotation-equivariant, and we find the resulting pictures appealing, cf. Figure~\ref{fig:dualitypics}. 

Our second observation is as follows. When $r=2$ or $3$, there is a well-known bijection between standard young tableaux and non-elliptic webs, due to Khovanov and Kuperberg \cite{KhovanovKuperberg} (see also \cite{Tymoczko}). It has the property that rotation of webs is given by promotion of standard young tableaux, and this can be used to give an elementary proof \cite{A2Promotion} of the cyclic sieving phenomenon for rectangular tableaux with $2$ or $3$ or rows. Our next observation is that under this bijection, dual basis webs correspond to transposed tableaux: 

\begin{observation}
In small cases, duality between basis webs is given by transposing standard young tableaux. 
\end{observation}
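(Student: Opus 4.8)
The plan is to combine three ingredients that are already (essentially) in the paper's toolbox: the Khovanov--Kuperberg bijection $\mathrm{KK}_r\colon \SYT(r\times k)\xrightarrow{\sim}\mcb_r$ between rectangular standard Young tableaux and the web basis $\mcb_r$ of $\mcw(r,n)$ (valid for $r=2,3$); the explicit description \eqref{eq:specialcase} of the $S_n$-module isomorphism $\mcw(k,n)\cong S_{\lambda^t}$ (with $\lambda$ the $r\times k$ rectangle, so $\lambda^t$ is the $k\times r$ rectangle) sending $\Delta_{I_1}\cdots\Delta_{I_r}$ to $\mathrm{poly}(T^t)$; and Theorem~\ref{thm:symmetricgroupduality}, which identifies $\Imm$ with the (up-to-scalar unique) $S_n$-map $\mcw(r,n)^*\to\mcw(k,n)\otimes\epsilon$. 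First I would fix, for each $S_n$-irreducible, the Young-orthogonal--type pairing implicit in \eqref{eq:commutativediagram}: on $S_\lambda$, dualizing with respect to the polytabloid/antitabloid pairing $M_\lambda\otimes N_\lambda\to\epsilon$ of the proof of Theorem~\ref{thm:symmetricgroupduality}, composed with the transpose map $N_\lambda\to S_{\lambda^t}$, $\{T\}\mapsto\mathrm{poly}(T^t)$. The upshot I want to record is a clean statement: \emph{under $\Imm$, the functional dual to the polytabloid $\mathrm{poly}(T)\in S_\lambda$ (with respect to the induced self-pairing) maps to $\pm\,\mathrm{poly}(T^t)\in S_{\lambda^t}\cong\mcw(k,n)$.} This is a purely representation-theoretic statement about the diagram \eqref{eq:commutativediagram} and requires no webs; the transposition $T\mapsto T^t$ appears precisely because the only $S_n$-map $S_\lambda^*\otimes\epsilon\to S_{\lambda^t}$ (up to scalar) is built from $N_\lambda\to S_{\lambda^t}$, $\{T\}\mapsto\mathrm{poly}(T^t)$, as spelled out in the proof of Theorem~\ref{thm:symmetricgroupduality}.

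With that in hand, the second and genuinely web-theoretic step is to show that $\mathrm{KK}_r$ intertwines the web basis $\mcb_r$ of $\mcw(r,n)$ with a polytabloid-type basis of $S_\lambda$ in a way that is ``triangular'' with respect to the tableau order. Concretely: Khovanov--Kuperberg show $\mathrm{KK}_r$ is characterized by a growth/state-sum rule, and (this is the key input I would cite) the resulting change of basis between $\{\mathrm{KK}_r(T)\}_{T\in\SYT}$ and the polytabloid basis $\{\mathrm{poly}(T)\}_{T\in\SYT}$ of $S_\lambda$ is unitriangular with respect to, say, the dominance/last-letter order on tableaux; this is exactly the content of the standard proof that non-elliptic webs form a basis and that $\mathrm{KK}_r$ is rotation-equivariant (rotation of webs $=$ promotion of tableaux), together with the compatibility of promotion on both sides. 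Dualizing a unitriangular change of basis and composing with the transposition statement from the first step then gives: $\Imm(\varphi_{\mathrm{KK}_r(T)}) = \pm\,\mathrm{KK}_k(T^t) + (\text{lower terms in }\mcb_k)$. Finally, Observation~1 (already asserted in the Appendix, verified in small cases) says the left side is \emph{itself} $\pm$ a single basis web; being a basis element plus lower-order terms, it must equal $\pm\,\mathrm{KK}_k(T^t)$ on the nose. Therefore duality of basis webs corresponds exactly to $T\mapsto T^t$.

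I would organize the write-up as: (i) a lemma extracting the ``$\mathrm{poly}(T)^\vee\mapsto\pm\mathrm{poly}(T^t)$'' statement from the proof of Theorem~\ref{thm:symmetricgroupduality}; (ii) a lemma recording the unitriangularity of $\mathrm{KK}_r$ against polytabloids, with appropriate citations to \cite{KhovanovKuperberg,Tymoczko,A2Promotion}; (iii) the short deduction combining (i), (ii), and Observation~1. The main obstacle is step (ii): pinning down a precise partial order on tableaux and a precise sense of ``triangular'' for which $\mathrm{KK}_r$ is unitriangular against $\{\mathrm{poly}(T)\}$, and making sure the two uses of $\mathrm{KK}$ (rank $r$ and rank $k$) are compatible under transposition. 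For $r=2$ this is elementary (crossingless matchings vs.\ the standard polytabloid basis, with the noncrossing/ballot condition giving honest unitriangularity); for $r=3$ it should follow from the Khovanov--Kuperberg growth rules plus rotation-equivariance, but the bookkeeping of signs (the ``$\pm$'' ambiguity coming from taggings and from $\epsilon$) is the delicate part. Since the observation is only claimed ``in small cases,'' I would, if a uniform order argument proves elusive, fall back on the fact that for $r=2,3$ and fixed small $k$ one can verify (ii) — hence the whole statement — by the finite calculation already alluded to in the Appendix, and present the representation-theoretic steps (i) and (iii) as the conceptual skeleton that explains \emph{why} transposition is the right operation.
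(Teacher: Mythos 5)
The paper does not give a conceptual proof of this observation: its ``proof'' is the single sentence ``Specifically, we checked that this is true for the dualities $\mcw(3,6)\leftrightarrow\mcw(2,6)$ and $\mcw(3,9)\leftrightarrow\mcw(3,9)$,'' i.e.\ a finite computation, together with the remark that the statement should follow from Lusztig's duality of canonical bases. Your fallback in the last paragraph therefore coincides exactly with what the paper actually does, and your steps (i) and (iii) are a genuinely different, more conceptual route that the paper only gestures at. The skeleton of (i)+(iii) is sound: the commutative diagram \eqref{eq:commutativediagram} does force the transpose to appear for purely representation-theoretic reasons, and combining a triangularity statement with Observation~1 (a single basis web equal to $\pm\,\mathrm{KK}_k(T^t)$ plus strictly lower terms must be $\pm\,\mathrm{KK}_k(T^t)$) is a valid way to pin down which basis web is dual to which. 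Note that for step (iii) you only need the diagonal entries nonzero and the two orders (on the $r$-side and the $k$-side) to correspond under transposition, not full unitriangularity.

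The genuine gap is your step (ii), and it is larger than you indicate. First, what the diagram in the proof of Theorem~\ref{thm:symmetricgroupduality} actually gives is that $\Imm$ sends the functional $\sgn(T)\cdot(\text{coefficient of }[T])$ restricted to $S_\lam$ --- i.e.\ $\sgn(\mcs)\,\eval(E_\mcs)$ --- to $\mathrm{poly}(T^t)$; this is \emph{not} the dual basis functional to $\mathrm{poly}(T)$, and the two differ by the (triangular, but order-dependent) change of basis in the standard basis theorem for Specht modules, which must be folded into your bookkeeping. Second, the triangularity you want to cite from Khovanov--Kuperberg is not what they prove: their leading-term/unitriangularity results for $r=3$ compare the non-elliptic web basis with the elementary (monomial/tabloid) basis and with the dual canonical basis, not with the polytabloid basis, and the orders involved (their weight/dominance order on sign strings versus dominance or last-letter order on tableaux) are not obviously the same, let alone obviously compatible with the order on the $k$-side after transposition. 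Since the web basis is known \emph{not} to equal the dual canonical basis for $\SL_3$ in general, one cannot shortcut this by appealing to canonical-basis duality either. Until a precise triangularity statement with a single compatible order on both sides is isolated and proved, the conceptual route does not close, and the observation rests --- as in the paper --- on the finite verification.
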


Specifically, we checked that this is true for the dualities $\mcw(3,6) \leftrightarrow \mcw(2,6)$ and $\mcw(3,9) \leftrightarrow \mcw(3,9)$. For $r \geq 4$, Westbury has given a basis of $\SL_r$-webs indexed by standard young tableaux \cite{Westbury}. In the special case of $\mcw(4,8)$, the basis of web immanants listed in the second column of 
Figure~\ref{fig:dualitypics} is \emph{different} than Westbury's basis for $\mcw(4,8)$. The web immanant basis consists of the $4$ rotation classes of the first type of web, the $8$ rotation classes of the second type of web, and the $2$ rotation classes of the third type of web. We remark that this third type of $\SL_4$-web is fixed by rotating two units, which is not obvious, but is an instance of the $\SL_4$-square move. The web immanant basis disagrees with Westbury's basis in these last two elements -- the element Westbury assigns to these two tableaux has order $4$ under rotation. Once the elements of Westbury's basis are replaced by the corresponding web immanants, we again have that rotation of webs is given by promotion. This suggests that there might be a slightly different choice of web basis, indexed by tableaux, that is better behaved with respect to promotion. 

Let us also note that the last $\SL_5$-web in Figure~\ref{fig:dualitypics} \emph{must} be fixed by rotating two units, because this is true of its dual $\SL_2$-web. And indeed, this can be checked using the $\SL_5$ diagrammatic relations -- applying a square move to the top square of this web produces the same web, but rotated 4 units clockwise (so applying the square move three times, we get rotation by 2 units).

We mention that Observation 8.3 is probably implied by similar statements guaranteeing duality between the \emph{canonical bases} for $\mcw(r,n)$ and $\mcw(k,n)$ \cite{Lusztig}, and from an agreement between the web basis and canonical basis in these instances. Nonetheless, the pictures in Figure~\ref{fig:dualitypics} are new. When $r$ and $k$ are both $>3$, the immanant map gives us a bilinear pairing of $\mcw(k,n)$ with $\mcw(r,n)$. It would be interesting to understand the resulting pairing between $\SL_k$-webs and $\SL_r$-webs combinatorially. 
\begin{center}
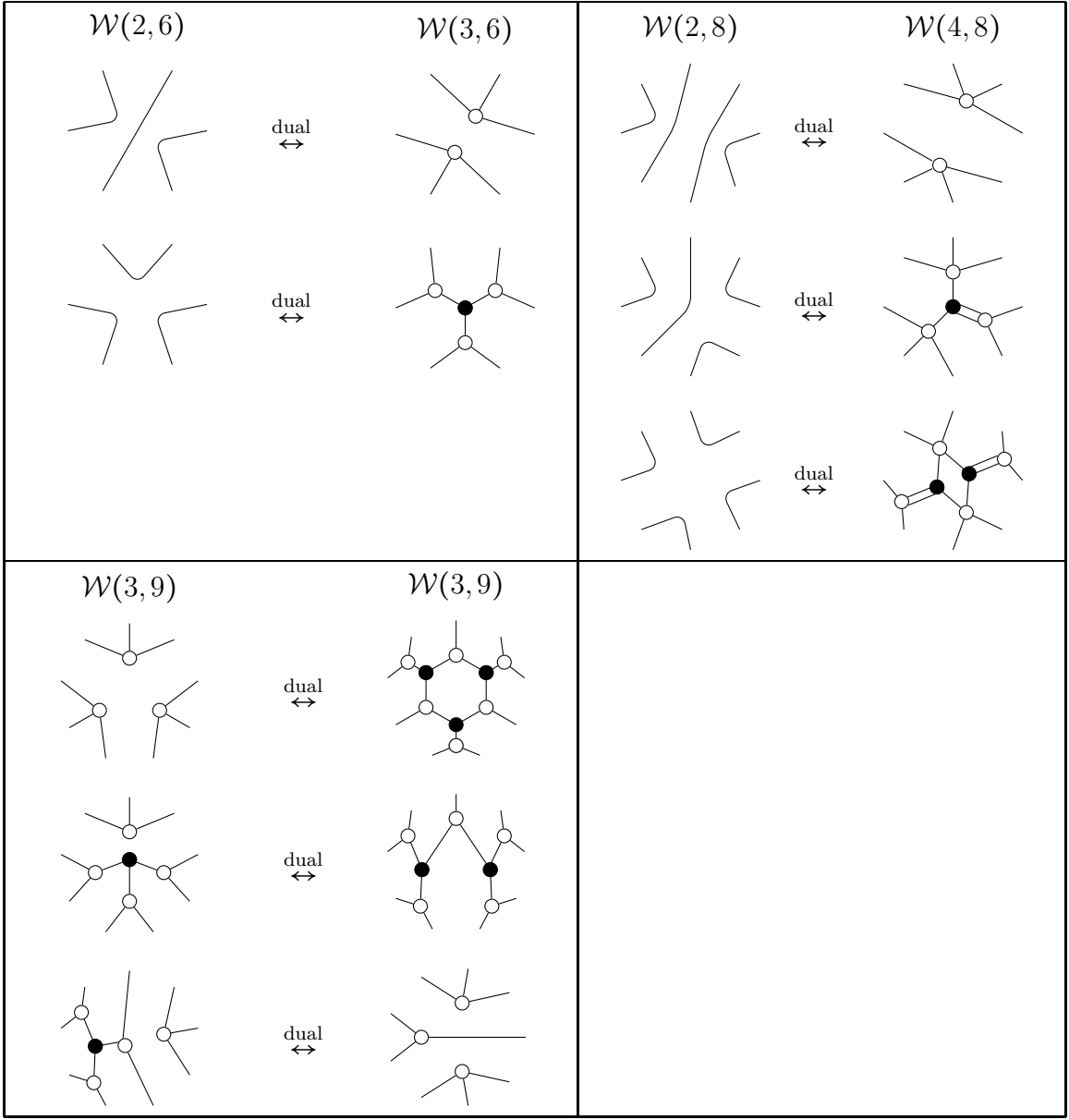
\begin{figure}[h]
\begin{tabular}{|cc|cc|}
\hline
\hspace{-1.2cm}
\begin{tikzpicture}[scale =1]
\node at (0,1.5) {$\mcw(2,6)$};
\draw (60:1cm)--(240:1cm);
\draw [rounded corners] (0:1cm)--(330:.3cm)--(300:1cm);
\draw [rounded corners] (120:1cm)--(150:.3cm)--(180:1cm);
\begin{scope}[yshift = -2.5cm]
\draw [rounded corners] (60:1cm)--(90:.3cm)--(120:1cm);
\draw [rounded corners] (180:1cm)--(210:.3cm)--(240:1cm);
\draw [rounded corners] (300:1cm)--(330:.3cm)--(360:1cm);
\node at (0,-3.4) {\hspace{.01cm}};
\end{scope}
\end{tikzpicture}
& 
\hspace{-1.5cm}
\begin{tikzpicture}[scale = 1]
\node at (0,1.5) {$\mcw(3,6)$};
\node at (-2.5,0) {$\overset{\text{dual}}{\leftrightarrow}$};
\draw (0:1cm)--(60:.3cm)--(60:1cm);
\draw (120:1cm)--(60:.3cm);
\draw (180:1cm)--(240:.3cm)--(300:1cm);
\draw (240:1cm)--(240:.3cm);
\filldraw[white] (240:.3cm) circle (0.1cm);
\draw (240:.3cm) circle (0.1cm);
\filldraw[white] (60:.3cm) circle (0.1cm);
\draw (60:.3cm) circle (0.1cm);
\node at (0,-5.85) {\hspace{.01cm}};
\begin{scope}[yshift = -2.5cm]
\draw (0:1cm)--(30:.5cm)--(60:1cm);
\draw (120:1cm)--(150:.5cm)--(180:1cm);
\draw (240:1cm)--(270:.5cm)--(300:1cm);
\draw (30:.5cm)--(0,0)--(150:.5cm);
\draw (270:.5cm)--(0,0);
\node at (-2.5,0) {$\overset{\text{dual}}{\leftrightarrow}$};
\filldraw[white] (30:.5cm) circle (0.1cm);
\draw (30:.5cm) circle (0.1cm);
\filldraw[white] (150:.5cm) circle (0.1cm);
\draw (150:.5cm) circle (0.1cm);
\filldraw[white] (270:.5cm) circle (0.1cm);
\draw (270:.5cm) circle (0.1cm);
\filldraw[black] (0,0) circle (0.1cm);
\draw (0,0) circle (0.1cm);
\end{scope}
\end{tikzpicture}
\hspace{.3cm}
&
\hspace{.3cm}
\begin{tikzpicture}[scale = 1]
\node at (0,1.5) {$\mcw(2,8)$};
\draw [rounded corners] (90:1cm)--(157.5:.25cm)--(225:1cm);
\draw [rounded corners] (45:1cm)--(337.5:.25cm)--(270:1cm);
\draw [rounded corners] (0:1cm)--(337.5:.5cm)--(310:1cm);
\draw [rounded corners] (135:1cm)--(157.5:.5cm)--(180:1cm);
\begin{scope}[yshift = -2.5cm]
\draw [rounded corners] (90:1cm)--(0,0)--(225:1cm);
\draw [rounded corners] (135:1cm)--(157.5:.5cm)--(180:1cm);
\draw [rounded corners] (0:1cm)--(22.5:.5cm)--(45:1cm);
\draw [rounded corners] (315:1cm)--(292.5:.5cm)--(270:1cm);
\end{scope}
\begin{scope}[yshift = -5cm]
\draw [rounded corners] (90:1cm)--(67.5:.5cm)--(45:1cm);
\draw [rounded corners] (180:1cm)--(157.5:.5cm)--(135:1cm);
\draw [rounded corners] (270:1cm)--(257.5:.5cm)--(225:1cm);
\draw [rounded corners] (360:1cm)--(337.5:.5cm)--(315:1cm);
\end{scope}
\end{tikzpicture}
&
\begin{tikzpicture}[scale = 1]
\node at (0,1.5) {$\mcw(4,8)$};
\node at (-2,0) {$\overset{\text{dual}}{\leftrightarrow}$};
\draw (0:1cm)--(67.5:.5cm)--(45:1cm);
\draw (90:1cm)--(67.5:.5cm)--(135:1cm);
\draw (180:1cm)--(247.5:.5cm)--(225:1cm);
\draw (270:1cm)--(247.5:.5cm)--(315:1cm);
\filldraw[white] (245.5:.5cm) circle (0.1cm);
\draw (247.5:.5cm) circle (0.1cm);
\filldraw[white] (67.5:.5cm) circle (0.1cm);
\draw (67.5:.5cm) circle (0.1cm);
\begin{scope}[yshift = -2.5cm]
\node at (-2,0) {$\overset{\text{dual}}{\leftrightarrow}$};
\draw (45:1cm)--(90:.5cm)--(135:1cm);
\draw (180:1cm)--(225:.5cm)--(270:1cm);
\draw (90:1cm)--(90:.5cm);
\draw (225:1cm)--(225:.5cm);
\draw (315:1cm)--(337.5:.5cm)--(0:1cm);
\draw (0,0)--(90:.5cm);
\draw (0,0)--(225:.5cm);
\draw (.05cm,.05cm)--(.5119cm,-.1413cm);
\draw (-.05cm,-.05cm)--(.4119cm,-.2413cm);
\filldraw[white] (90:.5cm) circle (0.1cm);
\draw (90:.5cm) circle (0.1cm);
\filldraw[white] (225:.5cm) circle (0.1cm);
\draw (225:.5cm) circle (0.1cm);
\filldraw[black] (0,0) circle (0.1cm);
\draw (0,0) circle (0.1cm);
\filldraw[white] (337.5:.5cm) circle (0.1cm);
\draw (337.5:.5cm) circle (0.1cm);
\end{scope}
\begin{scope}[yshift = -5cm]
\node at (-2,0) {$\overset{\text{dual}}{\leftrightarrow}$};
\draw (22.5:.25cm)--(112.5:.5cm)--(202.5:.25cm)--(292.5:.5cm)--(22.5:.25cm);
\draw (45:1cm)--(22.5:.8cm)--(0:1cm);
\draw (225:1cm)--(202.5:.8cm)--(180:1cm);
\draw (.6399cm,.356cm)--(.13cm,.145cm);
\draw (.689cm,.256cm)--(.18cm,.045cm);
\draw (-.6399cm,-.356cm)--(-.13cm,-.145cm);
\draw (-.689cm,-.256cm)--(-.18cm,-.045cm);
\draw (90:1cm)--(112.5:.5cm)--(135:1cm);
\draw (270:1cm)--(292.5:.5cm)--(315:1cm);
\filldraw[black] (22.5:.25cm) circle (0.1cm);
\draw (22.5:.25cm) circle (0.1cm);
\filldraw[black] (202.5:.25cm) circle (0.1cm);
\draw (202.5:.25cm) circle (0.1cm);
\filldraw[white] (112.5:.5cm) circle (0.1cm);
\draw (112.5:.5cm) circle (0.1cm);
\filldraw[white] (292.5:.5cm) circle (0.1cm);
\draw (292.5:.5cm) circle (0.1cm);
\filldraw[white] (22.5:.8cm) circle (0.1cm);
\draw (22.5:.8cm) circle (0.1cm);
\filldraw[white] (202.5:.8cm) circle (0.1cm);
\draw (202.5:.8cm) circle (0.1cm);
\end{scope}
\end{tikzpicture}
\hspace{.3cm}
\\
\hline 
\hspace{.5cm}
\begin{tikzpicture}[scale = 1]
\node at (0,1.5) {$\mcw(3,9)$};
\node at (2.5,0) {$\overset{\text{dual}}{\leftrightarrow}$};
\draw (50:1cm)--(90:.5cm);
\draw (90:1cm)--(90:.5cm);
\draw (130:1cm)--(90:.5cm);
\draw (170:1cm)--(210:.5cm);
\draw (210:1cm)--(210:.5cm);
\draw (250:1cm)--(210:.5cm);
\draw (290:1cm)--(330:.5cm);
\draw (330:1cm)--(330:.5cm);
\draw (370:1cm)--(330:.5cm);

\filldraw[white] (90:.5cm) circle (0.1cm);
\draw (90:.5cm) circle (0.1cm);
\filldraw[white] (210:.5cm) circle (0.1cm);
\draw (210:.5cm) circle (0.1cm);
\filldraw[white] (330:.5cm) circle (0.1cm);
\draw (330:.5cm) circle (0.1cm);
\begin{scope}[yshift = -2.5cm]
\node at (2.5,0) {$\overset{\text{dual}}{\leftrightarrow}$};
\draw (50:1cm)--(90:.5cm);
\draw (90:1cm)--(90:.5cm);
\draw (130:1cm)--(90:.5cm);
\filldraw[white] (90:.5cm) circle (0.1cm);
\draw (90:.5cm) circle (0.1cm);

\draw (170:1cm)--(190:.5cm);
\draw (210:1cm)--(190:.5cm);
\draw (250:1cm)--(270:.5cm);
\draw (290:1cm)--(270:.5cm);
\draw (330:1cm)--(350:.5cm);
\draw (370:1cm)--(350:.5cm);
\draw (190:.5cm)--(90:.1cm);
\draw (270:.5cm)--(90:.1cm);
\draw (350:.5cm)--(90:.1cm);

\filldraw[white] (190:.5cm) circle (0.1cm);
\draw (190:.5cm) circle (0.1cm);
\filldraw[white] (270:.5cm) circle (0.1cm);
\draw (270:.5cm) circle (0.1cm);
\filldraw[white] (350:.5cm) circle (0.1cm);
\draw (350:.5cm) circle (0.1cm);
\filldraw[black] (90:.1cm) circle (0.1cm);
\draw (90:.1cm) circle (0.1cm);
\end{scope}
\begin{scope}[yshift = -5cm]
\node at (2.5,0) {$\overset{\text{dual}}{\leftrightarrow}$};
\draw (330:1cm)--(10:.5cm);
\draw (10:1cm)--(10:.5cm);
\draw (50:1cm)--(10:.5cm);
\draw (130:1cm)--(150:.8cm);
\draw (170:1cm)--(150:.8cm);
\draw (210:1cm)--(230:.8cm);
\draw (250:1cm)--(230:.8cm);
\draw (290:1cm)--(190:.1cm);
\draw (90:1cm)--(190:.1cm);
\draw (150:.8cm)--(190:.5cm);
\draw (230:.8cm)--(190:.5cm);
\draw (190:.1cm)--(190:.5cm);
\filldraw[white] (150:.8cm) circle (0.1cm);
\draw (150:.8cm) circle (0.1cm);
\filldraw[white] (230:.8cm) circle (0.1cm);
\draw (230:.8cm) circle (0.1cm);
\filldraw[black] (190:.5cm) circle (0.1cm);
\draw (190:.5cm) circle (0.1cm);
\filldraw[white] (230:.1cm) circle (0.1cm);
\draw (230:.1cm) circle (0.1cm);
\filldraw[white] (10:.5cm) circle (0.1cm);
\draw (10:.5cm) circle (0.1cm);
\end{scope}
\end{tikzpicture}
&
\begin{tikzpicture}[scale = 1]
\node at (0,1.5) {$\mcw(3,9)$};
\draw (130:1cm)--(150:.8cm)--(170:1cm);
\draw (250:1cm)--(270:.8cm)--(290:1cm);
\draw (10:1cm)--(30:.8cm)--(50:1cm);
\draw (30:.5cm)--(90:.5cm)--(150:.5cm)--(210:.5cm)--(270:.5cm)--(330:.5cm)--(30:.5cm);
\draw (90:.5cm)--(90:1cm);
\draw (330:.5cm)--(330:1cm);
\draw (210:.5cm)--(210:1cm);
\draw (150:.5cm)--(150:.8cm);
\draw (30:.5cm)--(30:.8cm);
\draw (270:.5cm)--(270:.8cm);
\filldraw[white] (90:.5cm) circle (0.1cm);
\draw (90:.5cm) circle (0.1cm);
\filldraw[white] (210:.5cm) circle (0.1cm);
\draw (210:.5cm) circle (0.1cm);
\filldraw[white] (330:.5cm) circle (0.1cm);
\draw (330:.5cm) circle (0.1cm);
\filldraw[black] (30:.5cm) circle (0.1cm);
\draw (30:.5cm) circle (0.1cm);
\filldraw[black] (150:.5cm) circle (0.1cm);
\draw (150:.5cm) circle (0.1cm);
\filldraw[black] (270:.5cm) circle (0.1cm);
\draw (270:.8cm) circle (0.1cm);
\filldraw[white] (30:.8cm) circle (0.1cm);
\draw (30:.8cm) circle (0.1cm);
\filldraw[white] (150:.8cm) circle (0.1cm);
\draw (150:.8cm) circle (0.1cm);
\filldraw[white] (270:.8cm) circle (0.1cm);
\draw (270:.8cm) circle (0.1cm);
\begin{scope}[yshift = -2.5cm]
\draw (130:1cm)--(150:.8cm)--(170:1cm);
\draw (210:1cm)--(230:.8cm)--(250:1cm);
\draw (290:1cm)--(310:.8cm)--(330:1cm);
\draw (10:1cm)--(30:.8cm)--(50:1cm);
\draw (150:.8cm)--(190:.5cm);
\draw (230:.8cm)--(190:.5cm);
\draw (310:.8cm)--(-10:.5cm);
\draw (30:.8cm)--(-0:.5cm);
\draw (90:1cm)--(90:.65cm)--(190:.5cm);
\draw (90:.65cm)--(-10:.5cm);
\filldraw[white] (150:.8cm) circle (0.1cm);
\draw (150:.8cm) circle (0.1cm);
\filldraw[white] (230:.8cm) circle (0.1cm);
\draw (230:.8cm) circle (0.1cm);
\filldraw[white] (30:.8cm) circle (0.1cm);
\draw (30:.8cm) circle (0.1cm);
\filldraw[white] (310:.8cm) circle (0.1cm);
\draw (310:.8cm) circle (0.1cm);
\filldraw[white] (90:.65cm) circle (0.1cm);
\draw (90:.65cm) circle (0.1cm);
\filldraw[black] (190:.5cm) circle (0.1cm);
\draw (190:.5cm) circle (0.1cm);
\filldraw[black] (-10:.5cm) circle (0.1cm);
\draw (-10:.5cm) circle (0.1cm);
\end{scope}
\begin{scope}[yshift = -5cm]
\draw (-40:1cm)--(-80:.5cm);
\draw (-80:1cm)--(-80:.5cm);
\draw (240:1cm)--(-80:.5cm);
\draw (40:1cm)--(80:.5cm);
\draw (80:1cm)--(80:.5cm);
\draw (120:1cm)--(80:.5cm);
\draw (0:1cm)--(180:.5cm);
\draw (160:1cm)--(180:.5cm);
\draw (200:1cm)--(180:.5cm);
\filldraw[white] (180:.5cm) circle (0.1cm);
\draw (180:.5cm) circle (0.1cm);
\filldraw[white] (80:.5cm) circle (0.1cm);
\draw (80:.5cm) circle (0.1cm);
\filldraw[white] (280:.5cm) circle (0.1cm);
\draw (280:.5cm) circle (0.1cm);
\end{scope}
\end{tikzpicture}
\hspace{.5cm}&&\\
\hline
\end{tabular}
\caption{Duality between webs in small cases, drawn in terms of weblike subgraphs. The last case is on the next page. Edges of multiplicity two or three are depicted by doubled or tripled edges. \label{fig:dualitypics}}
\end{figure}
\end{center}

\newpage
\begin{center}
\begin{figure}
\begin{tabular}{|cc|}
\hline 
\hspace{.3cm}
\begin{tikzpicture}[scale = 1]
\node at (0,1.5) {$\mcw(2,10)$};
\draw (90:1cm)--(270:1cm);
\draw [rounded corners] (54:1cm)--(9:.2cm)--(306:1cm);
\draw [rounded corners] (18:1cm)--(0:.5cm)--(342:1cm);
\draw [rounded corners] (162:1cm)--(162+18:.5cm)--(198:1cm);
\draw [rounded corners] (126:1cm)--(126+45:.2cm)--(234:1cm);
\begin{scope}[yshift = -2.5cm]
\draw (90:1cm)--(270:1cm);
\draw [rounded corners] (54:1cm)--(9:.2cm)--(306:1cm);
\draw [rounded corners] (18:1cm)--(0:.5cm)--(342:1cm);
\draw [rounded corners] (234:1cm)--(198+18:.5cm)--(198:1cm);
\draw [rounded corners] (126:1cm)--(126+18:.5cm)--(162:1cm);
\end{scope}
\begin{scope}[yshift = -5cm]
\draw (90:1cm)--(270:1cm);
\draw [rounded corners] (54:1cm)--(36:.5cm)--(18:1cm);
\draw [rounded corners] (306:1cm)--(306+18:.5cm)--(342:1cm);
\draw [rounded corners] (234:1cm)--(198+18:.5cm)--(198:1cm);
\draw [rounded corners] (126:1cm)--(126+18:.5cm)--(162:1cm);
\end{scope}
\begin{scope}[yshift = -7.5cm]
\draw [rounded corners] (90:1cm)--(130:.2cm)--(198:1cm);
\draw [rounded corners] (54:1cm)--(300:.2cm)--(306:1cm);
\draw [rounded corners] (234:1cm)--(234+18:.2cm)--(270:1cm);
\draw [rounded corners] (18:1cm)--(0:.5cm)--(342:1cm);
\draw [rounded corners] (126:1cm)--(126+18:.5cm)--(162:1cm);
\end{scope}
\begin{scope}[yshift= -10cm]
\node at (0,1.5) {\hfill};
\draw [rounded corners] (90+36+36+36:1cm)--(130+36+36+36:.2cm)--(198+36+36+36:1cm);
\draw [rounded corners] (126+36+36+36:1cm)--(126+18+36+36+36:.5cm)--(162+36+36+36:1cm);
\draw [rounded corners] (54+36+36+36:1cm)--(36+36+36+36:.5cm)--(18+36+36+36:1cm);
\draw [rounded corners] (342+36+36+36:1cm)--(306+18+36+36+36:.5cm)--(306+36+36+36:1cm);
\draw [rounded corners] (270+36+36+36:1cm)--(234+18+36+36+36:.5cm)--(234+36+36+36:1cm);
\end{scope}
\begin{scope}[yshift = -12.5cm]
\draw [rounded corners] (18:1cm)--(0:.5cm)--(342:1cm);
\draw [rounded corners] (306:1cm)--(306-18:.5cm)--(270:1cm);
\draw [rounded corners] (198:1cm)--(198+18:.5cm)--(234:1cm);
\draw [rounded corners] (126:1cm)--(126+18:.5cm)--(162:1cm);
\draw [rounded corners] (54:1cm)--(54+18:.5cm)--(90:1cm);
\end{scope}
\end{tikzpicture}
&
\vspace{.3cm}
\begin{tikzpicture}[scale = 1]
\node at (-2,0) {$\overset{\text{dual}}{\leftrightarrow}$};
\node at (0,1.5) {$\mcw(5,10)$};
\draw (18:1cm)--(90:.2cm)--(54:1cm);
\draw (90:1cm)--(90:.2cm)--(126:1cm);
\draw (162:1cm)--(90:.2cm);
\draw (342:1cm)--(270:.2cm)--(306:1cm);
\draw (234:1cm)--(270:.2cm)--(270:1cm);
\draw (198:1cm)--(270:.2cm);
\filldraw[white] (270:.2cm) circle (0.1cm);
\draw (270:.2cm) circle (0.1cm);
\filldraw[white] (90:.2cm) circle (0.1cm);
\draw (90:.2cm) circle (0.1cm);
\begin{scope}[yshift = -2.5cm]
\node at (-2,0) {$\overset{\text{dual}}{\leftrightarrow}$};
\draw (18:1cm)--(60:.5cm)--(54:1cm);
\draw (90:1cm)--(60:.5cm)--(126:1cm);
\draw (342:1cm)--(-60:.5cm)--(306:1cm);
\draw (234:1cm)--(-60:.5cm)--(270:1cm);
\draw (60:.5cm)--(180:.1cm)--(-60:.5cm);
\draw (162:1cm)--(180:.7cm)--(198:1cm);
\draw (-.7,.09cm)--(-.1,.09cm);
\draw (-.7,.0cm)--(-.1,.0cm);
\draw (-.7,-.09cm)--(-.1,-.09cm);
\filldraw[white] (60:.5cm) circle (0.1cm);
\draw (60:.5cm) circle (0.1cm);
\filldraw[white] (-60:.5cm) circle (0.1cm);
\draw (-60:.5cm) circle (0.1cm);
\filldraw[black] (180:.1cm) circle (0.1cm);
\draw (180:.1cm) circle (0.1cm);
\filldraw[white] (180:.7cm) circle (0.1cm);
\draw (180:.7cm) circle (0.1cm);
\end{scope}
\begin{scope}[yshift= -5cm]
\node at (-2,0) {$\overset{\text{dual}}{\leftrightarrow}$};
\draw (54:1cm)--(90:.6cm)--(90:1cm);
\draw (126:1cm)--(90:.6cm);
\draw (306:1cm)--(270:.6cm)--(270:1cm);
\draw (234:1cm)--(270:.6cm);
\draw (162:1cm)--(180:.7cm)--(198:1cm);
\draw (18:1cm)--(0:.8cm)--(342:1cm);
\draw (180:.7cm)--(180:.2cm);
\draw (-.7cm,.09cm)--(-.2cm,.09cm);
\draw (-.7cm,.-.09cm)--(-.2cm,.-.09cm);
\draw (0:.7cm)--(0:.2cm);
\draw (.7cm,.09cm)--(.2cm,.09cm);
\draw (.7cm,.-.09cm)--(.2cm,.-.09cm);
\draw (90:.6cm)--(180:.2cm)--(270:.6cm)--(0:.2cm)--(90:.6cm);
\filldraw[white] (270:.6cm) circle (0.1cm);
\draw (270:.6cm) circle (0.1cm);
\filldraw[white] (90:.6cm) circle (0.1cm);
\draw (90:.6cm) circle (0.1cm);
\filldraw[black] (180:.2cm) circle (0.1cm);
\draw (180:.2cm) circle (0.1cm);
\filldraw[black] (0:.2cm) circle (0.1cm);
\draw (0:.2cm) circle (0.1cm);
\filldraw[white] (180:.7cm) circle (0.1cm);
\draw (180:.7cm) circle (0.1cm);
\filldraw[white] (0:.7cm) circle (0.1cm);
\draw (0:.7cm) circle (0.1cm);
\end{scope}
\begin{scope}[yshift = -10cm]
\node at (-2,0) {$\overset{\text{dual}}{\leftrightarrow}$};
\draw (18:1cm)--(36:.7cm)--(54:1cm);
\draw (18+72:1cm)--(36+72:.7cm)--(54+72:1cm);
\draw (162:1cm)--(198:.7cm)--(234:1cm);
\draw (198:1cm)--(198:.7cm);
\draw (162+108:1cm)--(198+108:.7cm)--(234+108:1cm);
\draw (198+108:1cm)--(198+108:.7cm);
\draw (-.21cm,.665cm)--(-.09cm,.285cm);
\draw (-.14cm,.665cm)--(-.02cm,.285cm);
\draw (-.28cm,.665cm)--(-.16cm,.285cm);
\draw (.41cm,.-.665cm)--(.14cm,.-.285cm);
\draw (.53cm,-.665cm)--(.26cm,-.285cm);
\draw (198:.7cm)--(108:.3cm);
\draw (198:.7cm)--(306:.3cm);
\draw (36:.7cm)--(108:.3cm);
\draw (36:.7cm)--(306:.28cm);
\draw (30:.7cm)--(306:.36cm);
\filldraw[white] (306:.7cm) circle (0.1cm);
\draw (306:.7cm) circle (0.1cm);
\filldraw[white] (198:.7cm) circle (0.1cm);
\draw (198:.7cm) circle (0.1cm);
\filldraw[white] (36:.7cm) circle (0.1cm);
\draw (36:.7cm) circle (0.1cm);
\filldraw[white] (108:.7cm) circle (0.1cm);
\draw (108:.7cm) circle (0.1cm);
\filldraw[black] (108:.3cm) circle (0.1cm);
\draw (108:.3cm) circle (0.1cm);
\filldraw[black] (306:.3cm) circle (0.1cm);
\draw (306:.3cm) circle (0.1cm);
\end{scope}
\begin{scope}[yshift = -7.5cm]
\node at (-2,0) {$\overset{\text{dual}}{\leftrightarrow}$};
\node at (0,1.5) {\hfill};
\draw (18:1cm)--(70:.5cm)--(54:1cm);
\draw (90:1cm)--(70:.5cm)--(126:1cm);
\draw (162:1cm)--(198:.6cm)--(198:1cm);
\draw (234:1cm)--(198:.6cm);
\draw (342:1cm)--(306:.6cm)--(306:1cm);
\draw (270:1cm)--(306:.6cm);
\draw (70:.5cm)--(0,0);
\draw (-.57cm,-.22cm)--(-.09cm,-.07cm);
\draw (-.57cm,-.12cm)--(-.09cm,.03cm);
\draw (.3cm,-.48cm)--(-.05cm,0cm);
\draw (.42cm,-.48cm)--(.09cm,0cm);
\filldraw[white] (70:.5cm) circle (0.1cm);
\draw (70:.5cm) circle (0.1cm);
\filldraw[white] (306:.6cm) circle (0.1cm);
\draw (306:.6cm) circle (0.1cm);
\filldraw[white] (198:.6cm) circle (0.1cm);
\draw (198:.6cm) circle (0.1cm);
\filldraw[black] (0,0) circle (0.1cm);
\draw (0,0) circle (0.1cm);
\end{scope}
\begin{scope}[yshift = -12.5cm]
\node at (-2,0) {$\overset{\text{dual}}{\leftrightarrow}$};
\draw (90:1cm)--(-.4cm,.35cm)--(126:1cm);
\draw (306:1cm)--(-.1cm,0cm)--(342:1cm);
\draw (234:1cm)--(234+18:.8cm)--(270:1cm);
\draw (18:1cm)--(.45,.45cm)--(54:1cm);
\draw (162:1cm)--(-.6cm,-.35cm)--(198:1cm);
\draw (0,.35)--(.45,.45);
\draw (.07,.28)--(.52,.38);
\draw (-.07,.42)--(.38,.52);
\draw (-.247cm,-.76cm)--(-.2,-.35);
\draw (-.177cm,-.76cm)--(-.13,-.35);
\draw (-.317cm,-.76cm)--(-.27,-.35);
\draw (-.45,.35)--(-.65,-.35);
\draw (-.35,.35)--(-.55,-.35);
\draw (-.4,.35)--(0,.35)--(-.1,0)--(-.2,-.35)--(-.6,-.35);
\draw (-.5,.0)--(-.1,0);
\filldraw[white] (.45,.45cm) circle (0.1cm);
\draw (.45,.45cm) circle (0.1cm);
\filldraw[white] (234+18:.8cm) circle (0.1cm);
\draw (234+18:.8cm) circle (0.1cm);
\filldraw[white] (-.4cm,.35cm) circle (0.1cm);
\draw (-.4cm,.35cm) circle (0.1cm);
\filldraw[black] (-.5cm,0cm) circle (0.1cm);
\draw (-.5cm,0cm) circle (0.1cm);
\filldraw[white] (-.6cm,-.35cm) circle (0.1cm);
\draw (-.6cm,-.35cm) circle (0.1cm);
\filldraw[black] (0cm,.35cm) circle (0.1cm);
\draw (0cm,.35cm) circle (0.1cm);
\filldraw[white] (-.1cm,0cm) circle (0.1cm);
\draw (-.1cm,0cm) circle (0.1cm);
\filldraw[black] (-.2cm,-.35cm) circle (0.1cm);
\draw (-.2cm,-.35cm) circle (0.1cm);
\end{scope}
\end{tikzpicture} 
\vspace{-.3cm}
\tabularnewline\hline
\end{tabular}
\end{figure}
\end{center}


\bibliographystyle{abbrvnat}
\bibliography{DimersWebsBib}

\begin{thebibliography}{19}
\providecommand{\natexlab}[1]{#1}
\providecommand{\url}[1]{\texttt{#1}}
\expandafter\ifx\csname urlstyle\endcsname\relax
  \providecommand{\doi}[1]{doi: #1}\else
  \providecommand{\doi}{doi: \begingroup \urlstyle{rm}\Url}\fi

\bibitem[Cautis et~al.((2014))Cautis, Kamnitzer, and Morrison]{SkewHowe}
S.~Cautis, J.~Kamnitzer, and S.~Morrison.
\newblock Webs and quantum skew {H}owe duality.
\newblock \emph{Math. Ann.}, 360:\penalty0 351--390, (2014).

\bibitem[Fomin and Pylyavskyy((2014))]{TensorsII}
S.~Fomin and P.~Pylyavskyy.
\newblock Webs on surfaces, rings of invariants, and clusters.
\newblock \emph{Proc. Natl. Acad. Sci}, 111\penalty0 (27):\penalty0 9680--9687,
  (2014).

\bibitem[Fomin and Pylyavskyy((2016))]{Tensors}
S.~Fomin and P.~Pylyavskyy.
\newblock Tensor diagrams and cluster algebras.
\newblock \emph{Adv. Math.}, 300:\penalty0 717--787, (2016).

\bibitem[Goodearl and Yakimov((2009))]{Goya}
K.~R. Goodearl and M.~Yakimov.
\newblock Poisson structures on affine spaces and flag varieties. ii. general
  case.
\newblock \emph{Trans. Amer. Math. Soc.}, 361\penalty0 (11):\penalty0
  5753--5780, (2009).

\bibitem[Khovanov and Kuperberg((1999))]{KhovanovKuperberg}
M.~Khovanov and G.~Kuperberg.
\newblock Webs bases for {\it sl}(3) are not dual canonical.
\newblock \emph{Pacific J. Math.}, 188\penalty0 (1):\penalty0 129--153, (1999).

\bibitem[Kim((2003))]{KimThesis}
D.~Kim.
\newblock Graphical calculus on representations of quantum {L}ie algebras.
\newblock \emph{Ph.D thesis, UC Davis}, (2003).

\bibitem[Knutson et~al.((2013))Knutson, Lam, and Speyer]{KLS}
A.~Knutson, T.~Lam, and D.~Speyer.
\newblock Positroid varieties: juggling and geometry.
\newblock \emph{Compos. Math}, 149\penalty0 (10):\penalty0 1710--1752, (2013).

\bibitem[Kuo((2004))]{Kuo}
E.~H. Kuo.
\newblock Applications of graphical condensation for enumerating matchings and
  tilings.
\newblock \emph{Theoret. Comput. Sci.}, 319\penalty0 (1--3):\penalty0 29--57,
  (2004).

\bibitem[Kuperberg((1996))]{Kuperberg}
G.~Kuperberg.
\newblock Spiders for rank 2 {L}ie algebras.
\newblock \emph{Commun. Math. Phys.}, 1\penalty0 (180):\penalty0 109--151,
  (1996).

\bibitem[Lam((2015))]{LamDimers}
T.~Lam.
\newblock Dimers, webs, and positroids.
\newblock \emph{J. Lond. Math. Soc.}, 92\penalty0 (3):\penalty0 633--656,
  (2015).

\bibitem[Lam((2016))]{LamNotesII}
T.~Lam.
\newblock Totally nonnegative {G}rassmannian and {G}rassmann polytopes.
\newblock \emph{Current developments in mathematics 2014}, pages 51–--152,
  (2016).

\bibitem[Lusztig((1992))]{Lusztig}
G.~Lusztig.
\newblock Canonical bases in tensor products.
\newblock \emph{Proc. Nat. Acad. Sci. U.S.A}, 89\penalty0 (17):\penalty0
  8177--8179, (1992).

\bibitem[Morrison((2007))]{MorrisonThesis}
S.~Morrison.
\newblock A diagrammatic catgeory for the representation theory of
  ${U}_q(\mathfrak{s}\mathfrak{l}_n)$.
\newblock \emph{Ph.D thesis, UC Berkeley}, (2007).

\bibitem[Oh((2011))]{OhSchubert}
S.~Oh.
\newblock Positroids and {S}chubert matroids.
\newblock \emph{J. Combin. Theory Ser. A}, 118\penalty0 (8):\penalty0
  2426--2435, (2011).

\bibitem[Peterson et~al.((2009))Peterson, Pylyavksyy, and Rhoades]{A2Promotion}
K.~T. Peterson, P.~Pylyavksyy, and B.~Rhoades.
\newblock Promotion and cyclic sieving via webs.
\newblock \emph{J. Algebraic Comb.}, 30\penalty0 (1):\penalty0 19--41, (2009).

\bibitem[Postnikov((2006))]{Postnikov}
A.~Postnikov.
\newblock Total positivity, {G}rassmannians, and {N}etworks.
\newblock \emph{arXiv preprint}, (2006).
\newblock \texttt{arXiv:0609764} [math.CO].

\bibitem[Postnkov et~al.((2009))Postnkov, Williams, and Speyer]{PSW}
A.~Postnkov, L.~Williams, and D.~Speyer.
\newblock Matching polytopes, toric geometry, and the totally non-negative
  {G}rassmannian.
\newblock \emph{J.~Algebraic Combin.}, 30\penalty0 (2):\penalty0 173--191,
  (2009).

\bibitem[Tymoczko((2012))]{Tymoczko}
J.~Tymoczko.
\newblock A simple bijection between standard $3\times n$ tableaux and
  irreducible webs for $\mathfrak{s}\mathfrak{l}_3$.
\newblock \emph{J. Algebraic Combin.}, 35\penalty0 (4):\penalty0 611--632,
  (2012).

\bibitem[Westbury((2012))]{Westbury}
B.~Westbury.
\newblock Web bases for the general linear groups.
\newblock \emph{J. Algebraic Combin.}, 35\penalty0 (1):\penalty0 5753--5780,
  (2012).

\end{thebibliography}
\label{secn:biblio}

\end{document}